\def\Add{}\def\endAdd{}
\newcommand\0{\phantom{0}}
\newcommand\dd{\partial}
\newcommand\threeVec[3]{\left(\begin{matrix}#1\\#2\\#3\end{matrix}\right)}
\newcommand\grad{\nabla}
\renewcommand\div{\nabla\cdot}
\newcommand\hatdiv{\hat\nabla\cdot}
\newcommand\Div{\text{{\rm div}}}
\newcommand\curl{\nabla\times}
\newcommand\Curl{\operatorname{curl}}
\newcommand\spn{\text{{\rm span}}}
\renewcommand\Re{\mathbb R}
\newcommand\Bu{\mathbb B}
\newcommand\Di{\mathbb D}
\newcommand\Ed{\mathbb E}
\newcommand\Po{\mathbb P}
\newcommand\Su{\mathbb S}
\newcommand\e{{\mathbf e}}
\newcommand\C{{\mathbf C}}
\newcommand\cc{{\mathbf c}}
\newcommand\hh{{\mathbf H}}
\newcommand\F{{\mathbf F}}
\newcommand\uu{{\mathbf u}}
\newcommand\vv{{\mathbf v}}
\newcommand\V{{\mathbf V}}
\newcommand\x{{\mathbf x}}
\newcommand\y{{\mathbf y}}
\newcommand\bpsi{{\boldsymbol\psi}}
\newcommand\hatbpsi{{\boldsymbol{\hat\psi}}}
\newcommand\bsigma{{\boldsymbol\sigma}}
\newcommand\hatbsigma{{\boldsymbol{\hat\sigma}}}
\newcommand\bphi{{\boldsymbol\phi}}
\newcommand\bvarphi{{\boldsymbol\varphi}}
\newcommand\cB{{\cal B}}
\newcommand\cF{{\cal F}}
\newcommand\cP{{\cal P}}
\newcommand\cPj{{\mathbf {Proj}}}
\newcommand\cT{{\cal T}}
\newcommand\red{\text{\rm red}}
\newcommand\ABF{\text{\rm ABF}}
\newcommand\Line[4]{\qbezier(#1,#2)(#1,#2)(#3,#4)}
\newcommand\myStrut{\vphantom{\int^H}}
\begin{document}

\title{Construction of $H(\Div)$-Conforming\\Mixed Finite Elements on
  Cuboidal Hexahedra\thanks{This work was supported by the
    U.S.~National Science Foundation under grant DMS-1418752.}}
\titlerunning{Mixed Finite Elements on Hexahedra}        

\author{
  Todd Arbogast
\and
  Zhen Tao
}

\institute{
  Todd Arbogast \at
  Department of Mathematics C1200,
  University of Texas,
  Austin, TX 78712-1202 and
  Institute for Computational Engineering and Sciences C0200,
  University of Texas,
  Austin, TX 78712-1229
 \email{arbogast@ices.utexas.edu}
\and
  Zhen Tao \at
  Institute for Computational Engineering and Sciences C0200,
  University of Texas,
  Austin, TX 78712-1229
  \email{taozhen.cn@gmail.com}
}

\date{Received: July 19, 2017 / Accepted: date} 

\maketitle

\begin{abstract}
  We generalize the two dimensional mixed finite elements of Arbogast and Correa [T. Arbogast and
  M. R. Correa, SIAM J. Numer. Anal., 54 (2016), pp. 3332--3356] defined on quadrilaterals to three
  dimensional cuboidal hexahedra.  The construction is similar in that polynomials are used directly
  on the element and supplemented with functions defined on a reference element and mapped to the
  hexahedron using the Piola transform.  The main contribution is providing a systematic procedure
  for defining supplemental functions that are divergence-free and have any prescribed polynomial
  normal flux.  \Add General procedures \endAdd are also presented for determining which
  supplemental \Add normal \endAdd fluxes are required to define the finite element space. Both full and reduced
  $H(\Div)$-approximation spaces may be defined, so the scalar variable, vector variable, and vector
  divergence are approximated optimally.  The spaces can be constructed to be of minimal local
  dimension, if desired. 
  \keywords{Second order elliptic, mixed method, divergence
  approximation, full $H(\Div)$-approximation, reduced $H(\Div)$-approximation, inf-sup stable, AC
  spaces} \subclass{65N30, 65N12, 41A10}
\end{abstract}


\section{Introduction}\label{sec:intro}

It is well-known that standard mixed finite elements defined on a square or cube and mapped to a general convex quadrilateral or cuboidal hexahedron perform poorly; in fact, they fail to approximate the divergence in an optimal way or require a very high number of local degrees of freedom.  Recently, Arbogast and Correa~\cite{Arbogast_Correa_2016} resolved the problem on quadrilaterals \Add (although, see the 2004 paper \cite{Duan_Liang_2004} for the lowest order case). \endAdd They defined two families of mixed finite elements that are of minimal local dimension and achieve optimal convergence properties.  In this paper, we generalize these elements to convex, cuboidal hexahedra, i.e., convex polyhedra with six flat quadrilateral faces.

It is convenient to discuss $H(\Div)$-conforming mixed finite elements in the context of the
simplest problem to which they apply.  Let $\Omega\subset\Re^d$, $d=2$ or $3$, be a polytopal
domain, let $W=L^2(\Omega)$ and $(\cdot,\cdot)_\omega$ denote the $L^2(\omega)$ or $(L^2(\omega))^d$
inner-product, and let $\V=H(\Div;\Omega)=\{\vv\in(L^2(\Omega))^2\,:\,\div\uu\in L^2(\Omega)\}$.
Consider the second order elliptic boundary value problem in mixed variational form: Find
$(\uu,p)\in\V\times W$ such that
\begin{alignat}2
\label{eq:var_darcy}
(a^{-1}\uu,\vv)_\Omega - (p,\div\vv)_\Omega &= 0&&\quad\forall\vv\in\V,\\
\label{eq:var_conservation}
\quad(\div\uu,w)_\Omega &= (f,w)_\Omega&&\quad\forall w\in W,
\end{alignat}
where $f\in L^2(\Omega)$ and the tensor $a$ is uniformly positive definite and bounded.  A mixed
finite element method is given by restricting $\V\times W$ to inf-sup compatible finite element
subspaces $\V_r\times W_r\subset\V\times W$ defined (in our case) over a mesh of convex, cuboidal
hexahedra, where $r\ge0$ is the index of the subspaces.

Full $H(\Div)$-approximation spaces of index $r\ge0$ approximate $\uu$, $p$, and $\div\uu$ to order
$h^{r+1}$, where $h$ is the maximal diameter of the computational mesh elements.  Such spaces
include the classic spaces of Raviart-Thomas (RT)~\cite{Raviart_Thomas_1977,Thomas_1977} in 2-D and
3-D, as well as, in 2-D only, the spaces of Arnold-Boffi-Falk (ABF)~\cite{ABF_2005} and
Arbogast-Correa (AC)~\cite{Arbogast_Correa_2016}.  The ABF spaces have been generalized recently to
3-D by Bergot and Durufle~\cite{Bergot_Durufle_2013}.  Reduced $H(\Div)$-approximation spaces of
index $r\ge1$ approximate $\uu$ to order $h^{r+1}$ and $p$ and $\div\uu$ to order $h^{r}$.  In this
category are the classic spaces due to Brezzi-Douglas-Marini (BDM)~\cite{BDM_1985} in 2-D and their
3-D counterpart from Brezzi-Douglas-Dur{\`{a}}n-Fortin (BDDF)~\cite{BDDF_1987,Arnold_Awanou_2014},
as well as the reduced Arbogast-Correa (AC$^\red$) spaces~\cite{Arbogast_Correa_2016} in 2-D.
Recent progress on defining 3-D mixed finite elements has been made by many authors, including, but
certainly not exhaustively,
\cite{Falk_Gatto_Monk_2011,Bergot_Durufle_2013,Arnold_Awanou_2014,Arnold_Boffi_Bonizzoni_2015,Cockburn_Fu_2017_mDecompIII}.

All spaces save AC, AC$^\red$, and the spaces of Cockburn and Fu~\cite{Cockburn_Fu_2017_mDecompIII}
are defined on a reference square or cube $\hat E=[0,1]^d$ and mapped to the element $E$ using the
Piola transform.  The RT and BDM (and BDDF) spaces lose accuracy.  The ABF spaces maintain accuracy,
but at the expense of adding many extra degrees of freedom to the local finite element space.
Cockburn and Fu construct finite elements on hexahedra using a sub mesh of tetrahedra.

The two families of AC spaces, $\V_r$ and $\V_r^{\red}$, are constructed using a different
strategy.  They use polynomials defined directly on the element and supplemented by two (one if
$r=0$) basis functions defined on a reference square and mapped via Piola.  Let $\Po_r$ denote the
space of polynomials of degree up to $r$, and let $\tilde\Po_r$ denote the space of homogeneous
polynomials of exact degree $r$.  On a convex quadrilateral element $E$, for which $d=2$ and
$\x=(x_1,x_2)$, the full $H(\Div)$-approximation spaces of index $r\ge0$ are
\begin{equation}
\label{eq:fullSpaces}
\V_r(E) = (\Po_r)^d\oplus\x\tilde\Po_r\oplus\Su_r(E)
\quad\text{and}\quad
W_r(E) = \Po_r,
\end{equation}
and the reduced $H(\Div)$-approximation spaces of index $r\ge1$ are
\begin{equation}
\label{eq:reducedSpaces}
\V_r^\red(E) = (\Po_r)^d\oplus\Su_r(E)
\quad\text{and}\quad
W_r(E) = \Po_{r-1}.
\end{equation}
One can define the reference supplemental space on $\hat E=[0,1]^2$ in 2-D as
\begin{equation}
\label{eq:ac2dsuppHat}
\hat\Su_r = \begin{cases}
\text{span}\big\{\widehat\Curl\big((\hat x_1-1/2)(\hat x_2-1/2)\big)\big\},&r=0,\\
\text{span}\big\{
\widehat\Curl\big((\hat x_1-1/2)^{r-1}\hat x_1(1-\hat x_1)(\hat x_2-1/2)\big),\\
\qquad\ \ \widehat\Curl\big((\hat x_1-1/2)(\hat x_2-1/2)^{r-1}\hat x_2(1-\hat x_2)\big)
\big\},&r\ge1,
\end{cases}
\end{equation}
and then
\begin{equation}
\label{eq:supplementsMapped}
\Su_r(E) = \cP_E\,\hat\Su_r,
\end{equation}
where $\cP_E$ is the Piola transform from $\hat E=[0,1]^d$ to $E$.

Our generalization of the two families of AC spaces \Add to the case of a convex, cuboidal hexahedron $E$ gives  full and reduced $H(\Div)$-approximating mixed finite elements $\V(E)\times W(E)$ and $\V^\red(E)\times W(E)$, respectively.  These are defined to include spaces of polynomials and special supplemental functions.  In fact, the spaces are defined formally by the same equations \endAdd \eqref{eq:fullSpaces}--\eqref{eq:reducedSpaces}, \eqref{eq:supplementsMapped}, \Add except that \endAdd now $d=3$, $\x=(x_1,x_2,x_3)$, \Add and the supplemental space $\Su_r(E)$ or $\hat\Su_r$ (replacing \eqref{eq:ac2dsuppHat}) must be defined carefully. \endAdd The number of supplemental functions is 2 for $r=0$ and otherwise at most $3(r+1)$.  The divergences of these vectors lie in $\Po_{r}$ for the full space and in $\Po_{r-1}$ for the
reduced space, and the normal flux on each edge or face~$f$ of~$E$ is in $\Po_r(f)$ (i.e., $\Po_r$ in dimension~$d-1$). In fact, the degrees of freedom (DOFs) of a vector $\vv\in\V_r$ or $\V_r^\red$ include the divergence and edge or face normal fluxes:
\begin{alignat}2
\label{eq:div_DOFs}
&(\div\vv,w)_E&&\quad\forall w\in\Po_{r}^*\ (\text{for }\V_r) \text{ or }\Po_{r-1}^*\ (\text{for }\V_r^\red),\\
\label{eq:flux_DOFs}
&(\vv\cdot\nu,\mu)_f&&\quad\forall\text{\,edges ($d=2$) or faces ($d=3$) }f\text{ of }E
           \text{ and }\forall\mu\in\Po_{r}(f),
\end{alignat}
where $\nu$ is the outer unit normal vector to $E$ and $\Po_{r}^*$ are the polynomials of degree $r$
with no constant term.  The purpose of the supplements is to make these DOFs independent, so that
the elements can be joined in $H(\Div)$ to form $\V_r$ or $\V_r^\red$ while also maintaining
consistency to approximate the divergence.  The set of DOFs is completed by adding conditions on the
interior, divergence-free, bubble functions (for $H(\Div)$-conforming elements, an {\em interior
  bubble function\/} is a vector function with vanishing normal component on $\dd E$).

After setting some additional notation in Section~\ref{sec:notation}, we describe how to construct
arbitrary, divergence-free supplemental functions in 3-D with a prescribed normal flux in
Sections~\ref{sec:presupplements} and~\ref{sec:supplements}.  In Section~\ref{sec:atSpaces}, we
describe a way to choose the specific supplemental function \Add space $\hat\Su_r$ \endAdd needed to
define $\Su_r(E)$ \Add by \eqref{eq:supplementsMapped}. \endAdd The most useful cases $r=0$ and
$r=1$ are given in detail \Add (although some proofs are relegated to the appendices). \endAdd For
$r\ge1$, we need to determine the normal fluxes needed to ensure that the DOFs \eqref{eq:flux_DOFs}
are independent. We note the recent work of Cockburn and Fu~\cite{Cockburn_Fu_2017_mDecompIII} in
this regard, but we provide a method for resolving this issue based on linear algebra. \Add We
present \endAdd some numerical results in Section~\ref{sec:numerics}.  \Add We close by summarizing
our results in the last section. \endAdd


\section{Further notation}\label{sec:notation}

In this section, we fix the notation and geometry used throughout the paper. As noted above, let
$\Po_r$ denote the space of polynomials of degree~$r$.  Generally, $\Po_r=\Po_r(\Re^3)$ is defined
over a three-dimensional domain. Sometimes we need to restrict polynomials to faces, so let
$\Po_r(f)$ be the polynomials defined over the domain~$f$.  Let $\tilde\Po_r$ denote the space of
homogeneous polynomials of degree~$r$.  We also let $\Po_{r,s,t}$ denote the tensor product
polynomial spaces of degree~$r$ in $x_1$, $s$ in $x_2$, and $t$ in $x_3$.

\subsection{A convex, cuboidal hexahedron and the Piola map}\label{sec:hexahedron}

Fix the reference element $\hat E=[0,1]^3$ and take any convex, cuboidal hexahedron $E$ oriented as
in Figure~\ref{fig:geometry}.  The reference element $\hat E$ has faces ordered as follows.  Face~0
is where $\hat x_1=0$ and it is denoted $\hat f_0=E\cap\{\hat x_1=0\}$, face~1 is where $\hat x_1=1$
and it is denoted $\hat f_1$, and so forth to face~5 is where $\hat x_3=1$ and it is denoted
$\hat f_5$.  The vertices $\hat\x_{ijk}$ are indexed by the faces of intersection, i.e.,
$\hat\x_{ijk}=\hat f_i\cap\hat f_j\cap\hat f_k$. The bijective and trilinear map
$\F_{\!E}:\hat E\to E$ is defined by
\begin{align}\nonumber
\F_{\!E}(\hat\x) 
&= \x_{024}(1-\hat x_1)(1-\hat x_2)(1-\hat x_3) + \x_{124}\,\hat x_1(1-\hat x_2)(1-\hat x_3)\\\nonumber
&\quad + \x_{034}(1-\hat x_1)\hat x_2(1-\hat x_3) + \x_{134}\,\hat x_1\hat x_2(1-\hat x_3)\\\nonumber
&\quad + \x_{025}(1-\hat x_1)(1-\hat x_2)\hat x_3 + \x_{125}\,\hat x_1(1-\hat x_2)\hat x_3\\\nonumber
&\quad + \x_{035}(1-\hat x_1)\hat x_2\hat x_3 + \x_{135}\,\hat x_1\hat x_2\hat x_3\\
&\in \Po_{1,1,1}.\label{eq:F_E}
\end{align}
This map fixes the notation on $E$ (faces $f_i=\F_{\!E}(\hat f_i)$ and vertices
$\x_{ijk}=\F_{\!E}(\hat\x_{ijk})$).
The center of face~$i$ is denoted $\x_i$. The outer unit normal to face~$i$ is 
\Add$\nu_i=(\nu_{i,1}, \nu_{i,2}, \nu_{i,3})$.  \endAdd
For example,
\begin{equation}
\nu_1 = \frac{(\x_{134}-\x_{124})\times(\x_{125}-\x_{124})}{\|(\x_{134}-\x_{124})\times(\x_{125}-\x_{124})\|}.
\end{equation}

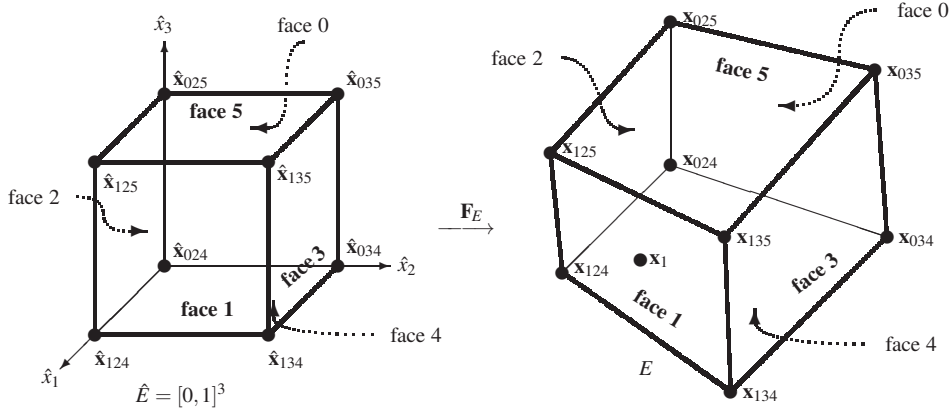
\begin{figure}\centerline{{\setlength\unitlength{6.5pt}
\begin{picture}(26,24)(-5.5,-4)
%
\put(4,4){\circle*{0.7}}\put(4.5,4.5){$\hat\x_{024}$}
\put(0,0){\circle*{0.7}}\put(0,-1.75){$\hat\x_{124}$}
\put(14,4){\circle*{0.7}}\put(14.5,4.75){$\hat\x_{034}$}
\put(10,0){\circle*{0.7}}\put(10,-1.75){$\hat\x_{134}$}
\put(4,14){\circle*{0.7}}\put(4.5,14.5){$\hat\x_{025}$}
\put(0,10){\circle*{0.7}}\put(0.5,8.5){$\hat\x_{125}$}
\put(14,14){\circle*{0.7}}\put(14.5,14.5){$\hat\x_{035}$}
\put(10,10){\circle*{0.7}}\put(10.5,9){$\hat\x_{135}$}
\thinlines
\put(4,4){\vector(-1,-1){6}}\put(-2.5,-2.5){\makebox(0,0){$\hat x_1$}}
\put(4,4){\vector(1,0){13}}\put(4,18){\makebox(0,0){$\hat x_3$}}
\put(4,4){\vector(0,1){13}}\put(18,4){\makebox(0,0){$\hat x_2$}}
\put(4,4){\line(0,1){10}}
\put(4,4){\line(1,0){10}}
\put(4,4){\line(-1,-1){4}}
\linethickness{1.25pt}
\Line{0}{10}{4}{14}
\put(0,10){\line(1,0){10}}
\put(0,10){\line(0,-1){10}}
\put(10,0){\line(-1,0){10}}
\put(10,0){\line(0,1){10}}
\Line{10}{0}{14}{4}
\put(14,14){\line(-1,0){10}}
\put(14,14){\line(0,-1){10}}
\Line{14}{14}{10}{10}
\thinlines
\put(6.5,1.5){\makebox(0,0){\bf face~1}}
\put(12,4){\rotatebox{40}{\makebox(0,0){\sl\bf face\,3}}}
\put(7,13){\makebox(0,0){\sl\bf face~5}}
\thicklines
\qbezier[10](12,17)(11,17)(11,14)\qbezier[10](11,14)(11,12)(10,12)\put(10,12){\vector(-1,0){1}}\put(12,18){\makebox(0,0){face~0}}
\qbezier[20](16,0)(10.3,0)(10.3,1)\put(10.3,1){\vector(0,1){1}}\put(18.5,0){\makebox(0,0){face~4}}
\qbezier[8](-1,8)(0.5,8)(0.5,7)\qbezier[8](0.5,7)(0.5,6)(2,6)\put(2,6){\vector(1,0){1}}\put(-3.5,8){\makebox(0,0){face~2}}
\put(5,-3.5){\makebox(0,0){$\hat E=[0,1]^3$}}
\end{picture}}%
\!\!\!\!\raisebox{64pt}{$-\!\!\!-\!\!\!\longrightarrow$\llap{\raisebox{6pt}{$\F_{\!E}$\ \ }}}\!\!\!\!%
{\setlength\unitlength{4.5pt}\begin{picture}(39.5,34)(-5.8,-1)
%
\put(10,19){\circle*{1}}\put(11,19){$\x_{024}$}
\put(1,10){\circle*{1}}\put(2,10){$\x_{124}$}
\put(28,13){\circle*{1}}\put(29,12.5){$\x_{034}$}
\put(15,0){\circle*{1}}\put(16,-0.5){$\x_{134}$}
\put(10,31){\circle*{1}}\put(11,31){$\x_{025}$}
\put(0,20){\circle*{1}}\put(1,20){$\x_{125}$}
\put(27,27){\circle*{1}}\put(28,26.5){$\x_{035}$}
\put(14.5,13){\circle*{1}}\put(15.5,12.5){$\x_{135}$}
\put(7.5,11){\circle*{1}}\put(8.25,10.75){$\x_{1}$}
\thinlines
\put(10,19){\line(0,1){12}}
\put(10,19){\line(-1,-1){9}}
\put(10,19){\line(3,-1){18}}
\linethickness{1.25pt}
\Line{0}{20}{1}{10}
\Line{0}{20}{10}{31}
\Line{0}{20}{14.5}{13}
\Line{15}{0}{1}{10}
\Line{15}{0}{28}{13}
\Line{15}{0}{14.5}{13}
\Line{27}{27}{28}{13}
\Line{27}{27}{10}{31}
\Line{27}{27}{14.5}{13}
\thinlines
\put(9,7){\rotatebox{-32}{\makebox(0,0){\bf face~1}}}
\put(22,10){\rotatebox{38}{\makebox(0,0){\sl\bf face~3}}}
\put(16,27){\rotatebox{-12}{\makebox(0,0){\sl\bf face~5}}}
\thicklines
\qbezier[14](27,32)(24,32)(24,28)\qbezier[14](24,28)(24,24)(20,24)\put(20,24){\vector(-1,0){1}}\put(31,32){\makebox(0,0){face~0}}
\qbezier[24](26,4)(17,4)(17,6)\put(17,6){\vector(0,1){1}}\put(30,4){\makebox(0,0){face~4}}
\qbezier[14](1,28)(3,28)(4,25)\qbezier[14](4,25)(4.5,22)(6,22)\put(6,22){\vector(1,0){1}}\put(-2.75,28){\makebox(0,0){face~2}}
\put(8,2){\makebox(0,0){$E$}}
\end{picture}}}
\caption{The geometry of the cuboidal hexahedron.  On the left is the reference $\hat E=[0,1]^3$, which is
  trilinearly mapped to the hexahedron~$E$. The faces are labeled from 0 to 5, and faces 1, 3, and 5
  are in front.  The corner points are labeled by their intersections with the faces (e.g.,
  $\x_{135}$ intersects faces 1, 3, and 5).
  The centers of the faces are labeled by the face (we show only $\x_1$ on face~1).}\label{fig:geometry}
\end{figure}

\subsubsection{Piola transform and Jacobians}
Let $D\F_{\!E}(\hat\x)$ denote the Jacobian matrix of $F_E$ and $J_E(\hat\x)=\det(D\F_{\!E}(\hat\x))$.  The
contravariant Piola transform $\cP_E$ maps a vector $\hat\vv:\hat E\to\Re^2$ to a vector
$\vv:E\to\Re^2$ by the formula
\begin{equation}
\label{eq:Piola}
\vv(\x) = \cP_E(\hat\vv)(\x) = \frac1{J_E}D\F_{\!E}\hat\vv(\hat\x),\quad\text{where }\x=\F_{\!E}(\hat\x).
\end{equation}
For a scalar function $w$, we define the map $\hat w$ by $\hat w(\hat\x)=w(\x)$, where again
$\x=\F_{\!E}(\hat\x)$.

The Piola transform preserves the divergence and normal components of $\hat\vv$ in the sense that
\begin{align}
\label{eq:Piola_div}
\div\vv  &= \frac1{J_E}\hatdiv\hat\vv,\\
\label{eq:Piola_normal}
\vv\cdot\nu &= \frac1{K_i}\hat\vv\cdot\hat\nu\quad\text{for each face $f_i$ of $\dd E$},
\end{align}
where $K_i$ is the face Jacobian.  The face Jacobian for face~$i$ is
\begin{align}
\label{eq:faceJacDef}
K_i = \bigg\|\Big(\frac{\dd\F_{\!E}}{\dd\hat x_\ell} \times \frac{\dd\F_{\!E}}{\dd\hat x_m}\Big)\Big|_{f_i}\bigg\|
= \bigg|\Big(\frac{\dd\F_{\!E}}{\dd\hat x_\ell} \times \frac{\dd\F_{\!E}}{\dd\hat x_m}\Big)\Big|_{f_i}\!\!\cdot \nu_{i}\bigg|,
\end{align}
where $i$, $\ell$, and $m$ are distinct integers from $\{1,2,3\}$ and, say, $\ell<m$.  The face Jacobian
describes the bilinear distortion of the face, and it depends only on the face vertices (so two
elements intersecting at face~$f$ will have the same face Jacobian). If we re-index the face so that
\begin{align}\nonumber
\F_{\!E}(\hat x_\ell,\hat x_m) \big|_{f_i} &= \y_{0}(1-\hat x_\ell)(1-\hat x_m) + \y_{1}\,\hat x_\ell(1-\hat x_m)\\
&\quad + \y_{2}(1-\hat x_\ell)\hat x_m + \y_{3}\,\hat x_\ell\,\hat x_m,
\end{align}
then it is not hard to show, when $f_i$ is flat, that
\begin{alignat}2\nonumber
K_i(\hat x_\ell,\hat x_m) &= &&\,\|(\y_2-\y_0)\times(\y_1-\y_0)\|(1-\hat x_\ell)(1-\hat x_m)\\\nonumber
&&+ &\,\|(\y_3-\y_1)\times(\y_0-\y_1)\|\,\hat x_\ell(1-\hat x_m)\\\nonumber
&&+ &\,\|(\y_3-\y_2)\times(\y_0-\y_2)\|(1-\hat x_\ell)\hat x_m\\\nonumber
&&+ &\,\|(\y_2-\y_3)\times(\y_1-\y_3)\|\,\hat x_\ell\,\hat x_m\\
&\in\rlap{$\Po_{1,1}$.}\label{eq:faceJac}
\end{alignat}

\subsubsection{Local variables}
It is clear that for the reference cube $\hat E$, the local variables can be taken as $\hat x_2$ and
$\hat x_3$ on faces 0 and 1, $\hat x_1$ and $\hat x_3$ on faces 2 and 3, and $\hat x_1$ and
$\hat x_2$ on faces 4 and 5.  Similar indexing does not necessarily hold on $E$. In fact, faces
indexed as being opposite to each other may be far from parallel (they could even be perpendicular
to each other).

It is necessary to select local variables on each face of $E$, two from among the set of variables
$\{x_1,x_2,x_3\}$.  For face~$\ell$, we denote these variables by $(x_{i_\ell},x_{j_\ell})$, where
we tacitly assume that $i_\ell<j_\ell$.  In practice, one can find the maximal absolute component of
$\nu_\ell$, say $|\nu_{\ell,m}|$, and omit $x_m$ from the set $\{x_1,x_2,x_3\}$, leaving the local
coordinates $\{x_{i_\ell},x_{j_\ell}\}$.


\section{Construction of Pre-supplemental Functions on the Reference Cube}\label{sec:presupplements}

In this section, we construct a vector function on the reference cube $\hat E=[0,1]^3$ with a
vanishing divergence and prescribed monomial normal flux (up to a constant).  These functions will
be used later to construct the space of supplements $\Su_r(E)$ for the new mixed finite elements.
We call our special vector functions {\em pre-supplements.}  For simplicity, we consider only face~1
(where $\hat x_1=1$).  The other faces are handled analogously.

\Add The vector functions in the local BDDF spaces of index $r$~\cite{BDDF_1987,Arnold_Awanou_2014}
have the property that their normal fluxes are polynomials of degree~$r$.  Moreover, both the normal
fluxes and the divergence are degrees of freedom. Analogous to BDDF, we can define vector functions
with the properties we desire.
\endAdd
Let us fix the monomial as $\hat x_2^\ell\hat x_3^m$ for some integers $\ell\ge0$ and $m\ge0$.  We define the pre-supplement to be, when $\ell+m\ge1$,
\begin{equation}\label{eq:presupp1}
\hatbpsi_{\ell,m}^{1} = \left(\begin{matrix}
\hat x_1\hat x_2^\ell\hat x_3^m - \dfrac{\hat x_1}{(\ell+1)(m+1)}\\[8pt]
\dfrac1{2(\ell+1)}\hat x_2(1-\hat x_2^\ell) \Big(\hat x_3^m + \dfrac1{m+1}\Big)\\[10pt]
\dfrac1{2(m+1)}\hat x_3(1-\hat x_3^m)\Big(\hat x_2^\ell + \dfrac1{\ell+1}\Big)
\end{matrix}\right)
\in \Po_{\ell+m+1}^3(\hat E).
\end{equation}
\Add 
It can be readily verified that indeed this function lies in the more symmetric BDDF space as defined by Arnold and Awanou~\cite{Arnold_Awanou_2014}, although this fact is not important in itself. What is important is that we have our desired properties \endAdd
\begin{align}\label{eq:presupp1_div_flux}
\hatdiv\hatbpsi_{\ell,m}^{1} = 0
\quad\text{and}\quad
\hatbpsi_{\ell,m}^{1}\cdot\hat\nu &= \begin{cases}
\hat x_2^\ell\hat x_3^m - \dfrac1{(\ell+1)(m+1)}&\text{on }\hat f_1,\ \ell+m\ge1,\\
0&\text{on }\hat f_i,\ i=0,2,\ldots,5,
\end{cases}
\end{align}
where we recall that the face~$f_1$ is where $\hat x_1=1$.  The case $\ell=m=0$ reduces to the zero
vector because of the divergence theorem.  We therefore accept a constant divergence and simply take
\begin{equation}\label{eq:presupp1-00}
\hatbpsi_{0,0}^{1} = \left(\begin{matrix}\hat x_1\\0\\0\end{matrix}\right),
\end{equation}
for which
\begin{align}\label{eq:presupp1_00_div_flux}
\hatdiv\hatbpsi_{0,0}^{1} = 1
\quad\text{and}\quad
\hatbpsi_{0,0}^{1}\cdot\hat\nu &= \begin{cases}
1&\text{on }\hat f_1,\\
0&\text{on }\hat f_i,\ i=0,2,\ldots,5.
\end{cases}
\end{align}
We can construct similar pre-supplements for each face; label these as $\hatbpsi_{\ell,m}^i$ for face $i=0,1,\ldots,5$.

We remark that our pre-supplemental functions are not unique when there are divergence-free bubble
functions.  For example, to $\hatbpsi_{\ell,m}^1$, one could add any function of the form
\begin{equation}\label{eq:presupp_divFreeBubble}
\left(\begin{matrix}
0\\
\dfrac{\dd}{\dd\hat x_3}\big[\hat x_2(1-\hat x_2)\hat x_3(1-\hat x_3)\hat p\big]\\[10pt]
-\dfrac{\dd}{\dd\hat x_2}\big[\hat x_2(1-\hat x_2)\hat x_3(1-\hat x_3)\hat p\big]
\end{matrix}\right),
\end{equation}
where $\hat p$ is any polynomial in $\hat x_2$ and $\hat x_3$, and we would maintain
\eqref{eq:presupp1_div_flux}.


\section{Construction of the Supplemental Functions on Hexahedra}\label{sec:supplements}

In this section, we construct a supplemental vector function $\bsigma$ with zero divergence on the
convex, cuboidal hexahedron~$E$.  It has a prescribed polynomial normal flux (up to a constant) on a
single face and vanishing normal flux on the other 5 faces. We continue to fix the nonzero flux on
face~1 for ease of exposition; the other faces are handled similarly.  In terms of the local face
variables $(x_{i_1},x_{j_1})$, suppose that the prescribed flux is $x_{i_1}^\ell x_{j_1}^m$.  That
is, we want to define $\bsigma_{\ell,m}^1$ when $\ell+m\ge1$ so that, for some
constant~$c_{\ell,m}^1$,
\begin{align}
\label{eq:desired_bsigma}
\div\bsigma_{\ell,m}^1 = 0
\quad\text{and}\quad
\bsigma_{\ell,m}^1\cdot\nu &= \begin{cases}
x_{i_1}^\ell x_{j_1}^m - c_{\ell,m}^1&\text{on }f_1,\ \ell+m\ge1,\\
0&\text{on }f_i,\ i=0,2,\ldots,5.
\end{cases}
\end{align}

The construction is given by first defining an appropriate vector function $\hat\bsigma_{\ell,m}^1$
on the reference cube $\hat E$ and then mapping it to $E$ using the Piola transform
\eqref{eq:Piola}, so that $\bsigma_{\ell,m}^1=\cP_E\hat\bsigma_{\ell,m}^1$.  The key is to recognize
that the normal components of $\hat\bsigma_{\ell,m}^1$ transform by \eqref{eq:Piola_normal}, and
therefore we need to include the factor $K_1$ within the first row of $\hat\bsigma_{\ell,m}^1$.  Our
construction is vaguely reminiscent of the one given in 2-D by Shen~\cite{Shen_1994} \Add (for which the
resulting method was later proved in \cite{Kwak_Pyo_2011}). \endAdd

To proceed, we must realize two simple facts.  First, the face Jacobian $K_1$ is bilinear in the
reference variables, i.e., \eqref{eq:faceJac} holds.  Second, the polynomial flux
$x_{i_1}^\ell x_{j_1}^m$ is evaluated in terms of the reference variables by the map $F_E:\hat E\to E$
\eqref{eq:F_E}, i.e.,
\begin{equation}
\label{eq:reference_monomial}
x_{i_1} = F_{i_1}(1,\hat x_2,\hat x_3)\quad\text{and}\quad x_{j_1} = F_{j_1}(1,\hat x_2,\hat x_3),
\end{equation}
which are both bilinear.  Therefore the product $x_{i_1}^\ell x_{j_1}^m$, multiplied by $K_1$ and
written in terms of the reference variables, is in the space $\Po_{n+1,n+1}$, where $n=\ell+m$.  Let
the pre-image of $x_{i_1}^\ell x_{j_1}^m$ (scaled by $K_1$) be denoted
\begin{align}\nonumber
K_1 x_{i_1}^\ell x_{j_1}^m &= K_1(\hat x_2,\hat x_3)\,F_{i_1}(1,\hat x_2,\hat x_3)^\ell\,F_{j_1}(1,\hat x_2,\hat x_3)^m\\
&= \underset{i+j\ge1}{\sum_{i=0}^{n+1}\sum_{j=0}^{n+1}}
          \alpha_{ij}^{\ell,m}\Big(\hat x_2^i\hat x_3^j - \frac1{(i+1)(j+1)}\Big) + \alpha_{0,0}^{\ell,m}.
\label{eq:referenceFlux}
\end{align}
That is, in practice, we compute the coefficients $\alpha_{ij}^{\ell,m}$ based on the geometry of
the hexahedron.

When $n=\ell=m=0$, let
\begin{equation}
\label{eq:bsigma1_0_0}
\hatbsigma_{0,0}^1 = \sum_{i=0}^{1}\sum_{j=0}^{1}\alpha_{ij}^{0,0}\hatbpsi_{i,j}^1.
\end{equation}
Recalling \eqref{eq:presupp1_div_flux} and \eqref{eq:presupp1_00_div_flux}, this function has
divergence $\alpha_{0,0}^{0,0}$ and flux $K_1$ on face~1.  By the divergence theorem, clearly
$\alpha_{0,0}^{0,0}=|f_1|$, the area of face~1, so
\begin{align}
\label{eq:bsigma0_div_flux}
\hat\nabla\cdot\hatbsigma_{0,0}^1 = |f_1|
\quad\text{and}\quad
\hatbsigma_{0,0}^1\cdot\hat\nu &= \begin{cases}
K_1&\text{on }\hat f_1,\\
0&\text{on }\hat f_i,\ i=0,2,\ldots,5.
\end{cases}
\end{align}

When $n=\ell+m\ge1$, we define
\begin{equation}
\label{eq:bsigma1_ell_m}
\hatbsigma_{\ell,m}^1
 = \sum_{i=0}^{n+1}\sum_{j=0}^{n+1}\alpha_{ij}^{\ell,m}\hatbpsi_{i,j}^1 - \frac{\alpha_{0,0}^{\ell,m}}{|f_1|}\hatbsigma_{0,0}^1,
\end{equation}
which has vanishing divergence and matches the flux \eqref{eq:referenceFlux}, up to a constant
multiple of~$K_1$.  Owing to \eqref{eq:Piola_div}--\eqref{eq:Piola_normal},
$\bsigma_{\ell,m}^1=\cP_E\hat\bsigma_{\ell,m}^1$ has the desired properties
\eqref{eq:desired_bsigma}. We can construct a similar vector function for each face; label these as
$\bsigma_{\ell,m}^i$ for face $i=0,1,\ldots,5$.

In the case of constant normal face fluxes (i.e., $n=0$), we cannot remove the divergence unless
we allow nonzero flux on at least two faces. We \Add therefore define and later use the lowest order \endAdd  divergence-free supplements \Add given by \endAdd
\begin{align}
\label{eq:bsigma0}
\bsigma_{0,0}^{i,j} &= \cP_E\Big(\frac{\hatbsigma_{0,0}^i}{|f_i|} - \frac{\hatbsigma_{0,0}^j}{|f_j|}\Big).
\end{align}
\Add
Using (\ref{eq:Piola_div}), (\ref{eq:Piola_normal}) and
(\ref{eq:bsigma0_div_flux}), it can be easily verified that $\bsigma_{0,0}^{i,j}$ is divergence-free and provides constant
normal fluxes on faces $i$ and $j$.
\endAdd


\section{Generalized AC Spaces on Convex, Cuboidal Hexahedra}\label{sec:atSpaces}

We now present our generalization of the two families of AC spaces~\cite{Arbogast_Correa_2016}.  The
full and reduced spaces are given by \eqref{eq:fullSpaces} and \eqref{eq:reducedSpaces},
respectively, once we have defined the supplemental space $\Su_r$ for $r\ge0$, so that the DOFs
\eqref{eq:div_DOFs}--\eqref{eq:flux_DOFs} are independent.

The supplemental space is constructed using the functions defined in
Sections~\ref{sec:presupplements}--\ref{sec:supplements}, once we know what fluxes are required to
independently span the space of \Add normal \endAdd fluxes \eqref{eq:flux_DOFs}.  To this end, it is convenient to
define the \Add full \endAdd flux operator $\cF$ \Add as well as the operators $\cF_{024}$ and $\cF_{135}$ on the even and odd faces, respectively, \endAdd to be
\begin{alignat}2
\label{eq:fluxOp}
\cF(\uu) &= \big[\uu\cdot\nu_0|_{f_0},\ \ldots\ \uu\cdot\nu_5|_{f_5}\big]
&&\subset\prod_{i=0}^5\Po_r(f_i) = (\Po_r(\Re^2))^{1\times6},\\
\nonumber
\Add \cF_{024}(\uu) \endAdd
&= \big[\uu\cdot\nu_0|_{f_0},\uu\cdot\nu_2|_{f_2},\uu\cdot\nu_4|_{f_4}\big]
&&\subset\prod_{i=0}^2\Po_r(f_{2i}) = (\Po_r(\Re^2))^{1\times3},\\
\nonumber
\Add \cF_{135}(\uu) \endAdd
&= \big[\uu\cdot\nu_1|_{f_1},\uu\cdot\nu_3|_{f_3},\uu\cdot\nu_5|_{f_5}\big]
&&\subset\prod_{i=0}^2\Po_r(f_{2i+1}) = (\Po_r(\Re^2))^{1\times3},
\end{alignat}
\Add Note that $\cF$ is a permutation of the block matrix $\big[\cF_{024}~~\cF_{135}\big]$. \endAdd For a sequence of $n$ functions, we also define the ``flux matrix'' as
\begin{equation}\label{eq:fluxMatrix}
\cF(\uu_1,\ldots,\uu_n) = \left[\begin{matrix}
\cF(\uu_1)\\
\vdots\\
\cF(\uu_n)
\end{matrix}\right] = \left[\begin{matrix}
\uu_1\cdot\nu_0|_{f_0}&\ldots&\uu_1\cdot\nu_5|_{f_5}\\
\vdots&\ddots&\vdots\\
\uu_n\cdot\nu_0|_{f_0}&\ldots&\uu_n\cdot\nu_5|_{f_5}
\end{matrix}\right] \in(\Po_r(\Re^2))^{n\times6},
\end{equation}
\Add and we define $\cF_{024}(\uu_1,\ldots,\uu_n)$ and $\cF_{135}(\uu_1,\ldots,\uu_n)$ in $(\Po_r(\Re^2))^{n\times3}$ analogously. \endAdd

\subsection{The case $r=0$}\label{sec:at0}

On the convex, cuboidal hexahedron $E$, the new space is
\begin{equation}\label{eq:at0}
\V_0(E) = \Po_0^3\oplus\x\Po_0\oplus\Su_0,
\end{equation}
which has only normal flux DOFs.  We will give two definitions of $\Su_0$, but first, note that
$\Po_0^3\oplus\x\Po_0$ has local dimension four, and a basis is
\begin{equation}\label{eq:at0_basis_polynomial}
\cB_0^{\text{poly}} = \{\x-\x_{124},\x-\x_{034},\x-\x_{025},\x-\x_{024}\}.
\end{equation}
The normal flux $(\x-\x_{ijk})\cdot\nu_\ell|_{f_\ell}$ is zero if $\ell\in\{i,j,k\}$ and strictly positive
otherwise.

\subsubsection{A simple supplemental space for $r=0$}\label{sec:at0simple}
Recalling \eqref{eq:bsigma0}, we define simply
\begin{equation}\label{eq:Supp0simple}
\Su_0^{\text{simple}} = \spn\{\bsigma_{0,0}^{1,3},\bsigma_{0,0}^{3,5}\}.
\end{equation}
A local basis is
$\cB_0^{\text{simple}} = \cB_0^{\text{poly}}\cup\{\bsigma_{0,0}^{1,3},\bsigma_{0,0}^{3,5}\}$. To prove that the DOFs are independent, we compute the flux matrix, which is an ordinary matrix of numbers when $r=0$.  This matrix \Add is a permutation of $\big[\cF_{024}~~\cF_{135}\big]$, which \endAdd has the sign
\begin{equation}\label{eq:fluxMatrix0simple}
\text{signum}\big( \Add \big[\cF_{024}(\cB_0^{\text{simple}})~~\cF_{135}(\cB_0^{\text{simple}})\big]\endAdd \big) = \left[\begin{array}{ccc|ccc}
+&0&0   &0&+&+\\
0&+&0   &+&0&+\\
0&0&+   &+&+&0\\\hline
0&0&0   &+&+&+\\
0&0&0   &+&-&0\\
0&0&0   &0&+&-
\end{array}\right],
\end{equation}
where a plus or minus sign ($+$ or $-$) indicates that the number is strictly positive or negative, respectively.  
\Add
Obviously, matrix (\ref{eq:fluxMatrix0simple}) is invertible if 
the determinant of the lower right $3 \times 3$ submatrix is nonzero.
This determinant is strictly positive if we expand the $3\times 3$
matrix by Sarrus' rule.
\endAdd
Since a matrix of this form is invertible, we can decouple the DOFs \eqref{eq:flux_DOFs}; thus, the mixed finite element is well defined.

A set of shape functions can be defined by inverting $\cF(\cB_0^{\text{simple}})$.  If we let
$C=(\cF(\cB_0^{\text{simple}}))^{-1}$, then the shape function for the DOF on face~$i$ (i.e.,
$\cF(\phi_{0,i}^{\text{simple}})=\e_{i+1}^T$) is
\begin{align}\nonumber
\bphi_{0,i}^{\text{simple}}(\x) &= C_{i,1}(\x-\x_{124}) + C_{i,2}(\x-\x_{034}) + C_{i,3}(\x-\x_{025})\\
&\quad  + C_{i,4}(\x-\x_{024}) + C_{i,5}\bsigma_{0,0}^{1,3} + C_{i,6}\bsigma_{0,0}^{3,5}.\label{eq:shape0simple}
\end{align}
In fact, an explicit basis can be constructed without the need to invert a matrix.  
\Add
Recall that for any point $\x$ on face 1, $(\x-\x_{024})\cdot\nu_1$ denotes the distance
from point $\x_{024}$ to face 1, which is a constant.
\endAdd
Compute the
numbers
$$
\alpha=(\x-\x_{024})\cdot\nu_1|_{f_1},\quad 
\beta=(\x-\x_{024})\cdot\nu_3|_{f_3},\quad\text{and}\quad
\gamma=(\x-\x_{024})\cdot\nu_5|_{f_5},
$$
\Add which are positive due to the convexity of $E$, and then 
$\cF_{024}(\x-\x_{024},\bsigma_{0,0}^{1,3},\bsigma_{0,0}^{3,5})$ vanishes and \endAdd
\begin{equation}\label{eq:simple0fluxes}
\cF_{135}(\x-\x_{024},\bsigma_{0,0}^{1,3},\bsigma_{0,0}^{3,5})= \left[\begin{matrix}
\alpha&\beta&\gamma\\
1/|f_1|&-1/|f_3|&0\\
0&1/|f_3|&-1/|f_5|
\end{matrix}\right].
\end{equation}
\Add Guided by \endAdd these fluxes, we construct \Add the following linear combinations: \endAdd
\begin{align}
\label{eq:simpleShape1}
\bphi_{0,1}^{\text{simple}}(\x)
&= |f_1|\frac{\x-\x_{024} + (|f_3|\beta+|f_5|\gamma)\bsigma_{0,0}^{1,3} + |f_5|\gamma\bsigma_{0,0}^{3,5}}
              {|f_1|\alpha+|f_3|\beta+|f_5|\gamma},\\
\label{eq:simpleShape3}
\bphi_{0,3}^{\text{simple}}(\x)
&= |f_3|\frac{\x-\x_{024} - |f_1|\alpha\bsigma_{0,0}^{1,3} + |f_5|\gamma\bsigma_{0,0}^{3,5}}
              {|f_1|\alpha+|f_3|\beta+|f_5|\gamma},\\
\label{eq:simpleShape5}
\bphi_{0,5}^{\text{simple}}(\x)
&= |f_5|\frac{\x-\x_{024} - |f_1|\alpha\bsigma_{0,0}^{1,3} - (|f_1|\alpha+|f_3|\beta)\bsigma_{0,0}^{3,5}}
              {|f_1|\alpha+|f_3|\beta+|f_5|\gamma}.
\end{align}
\Add Using \eqref{eq:simple0fluxes}, inspection shows that indeed $\cF(\phi_{0,i}^{\text{simple}})=\e_{i+1}^T$, $i=1,3,5$. \endAdd From these functions, we then construct
\begin{align}
\label{eq:simpleShape0}
\bphi_{0,0}^{\text{simple}}(\x)
&= \frac{\nu_2\times\nu_4 - (\nu_2\times\nu_4)\cdot(\nu_1\bphi_{0,1}^{\text{simple}}
 + \nu_3\bphi_{0,3}^{\text{simple}} + \nu_5\bphi_{0,5}^{\text{simple}})}{(\nu_2\times\nu_4)\cdot\nu_0},\\
\label{eq:simpleShape2}
\bphi_{0,2}^{\text{simple}}(\x)
&= \frac{\nu_0\times\nu_4 - (\nu_0\times\nu_4)\cdot(\nu_1\bphi_{0,1}^{\text{simple}}
 + \nu_3\bphi_{0,3}^{\text{simple}} + \nu_5\bphi_{0,5}^{\text{simple}})}{(\nu_0\times\nu_4)\cdot\nu_2},\\
\label{eq:simpleShape4}
\bphi_{0,4}^{\text{simple}}(\x)
&= \frac{\nu_0\times\nu_2 - (\nu_0\times\nu_2)\cdot(\nu_1\bphi_{0,1}^{\text{simple}}
 + \nu_3\bphi_{0,3}^{\text{simple}} + \nu_5\bphi_{0,5}^{\text{simple}})}{(\nu_0\times\nu_2)\cdot\nu_4}.
\end{align}
\Add Using the property $\cF(\phi_{0,i}^{\text{simple}})=\e_{i+1}^T$, $i=1,3,5$, already established, a careful inspection of \eqref{eq:simpleShape0}--\eqref{eq:simpleShape4} shows that these functions also satisfy the required property $\cF(\phi_{0,i}^{\text{simple}})=\e_{i+1}^T$, $i=0,2,4$.  Therefore, we have constructed a simple set of shape functions for the lowest order case $r=0$. \endAdd

\subsubsection{A more general supplemental space for $r=0$}\label{sec:at0general}
While $\cB_0^{\text{simple}}$ is well defined and simple to implement, it is defined in a highly
non-symmetric way. One could average over all similar constructions, but it is not clear how to
weight them. An alternative is to add supplements that are as different as possible from the
polynomial part $\Po_0^3\oplus\x\Po_0$, and subject to the divergence-free constraint.  A criterion
is to consider the fluxes generated by this part, and take supplements with fluxes that span the
orthogonal complement.  We denote the flux matrix for $\cB_0^{\text{poly}}$ as
\Add
\begin{equation}\label{eq:fluxMatrix0_polynomial}
M = \left[\cF_{024}(\cB_0^{\text{poly}})\:\:\cF_{135}(\cB_0^{\text{poly}})\right] = \left[\begin{matrix}
a_1&0  &0  &0     &b_1  &c_1    \\
0  &b_2&0  &a_2   &0    &c_2    \\
0  &0  &c_3&a_3   &b_3  &0      \\
0  &0  &0  &\alpha&\beta&\gamma 
\end{matrix}\right],
\end{equation}
\endAdd
where each letter ($a_i$, $b_i$, $c_i$, $\alpha$, $\beta$, and $\gamma$) stands for a specific
positive number.  The orthogonal complement of the row space of $M$ is easily seen to be spanned by
$N^T$ \Add (i.e., rank$\,M = 4$, rank$\,N = 2$ and $MN^T = 0$), \endAdd where
\Add
\begin{equation}\label{eq:fluxMatrix0_ortho}
N = \left[\begin{matrix}
\alpha b_1/a_1&\ -\beta a_2/b_2&\ (\alpha b_3 - \beta a_3)/c_3&\ \beta&\ -\alpha&\ 0\\
(\beta c_1 - \gamma b_1)/a_1&\ \beta c_2/b_2&\ -\gamma b_3/c_3&\ 0&\ \gamma&\ -\beta
\end{matrix}\right].
\end{equation}
\endAdd
Let $S$ denote the $2\times6$ matrix with rows being the desired supplemental fluxes.  The
divergence-free constraint can be written as $S\bvarphi=0$ in terms of the vector of face areas, which
is
\begin{equation}\label{eq:areaVector}
\bvarphi = \big(|f_0|,|f_1|,|f_2|,|f_3|,|f_4|,|f_5|).
\end{equation}
We define $S$ to be the projection of $N$ to the orthogonal complement of $\spn\{\bvarphi\}$, i.e.,
\begin{equation}\label{eq:desiredS0}
S = N\Big(I - \frac{\bvarphi\bvarphi^T}{\bvarphi^T\bvarphi}\Big),
\end{equation}
and then we define $\Su_0=\spn\{\bsigma_{0,0}^1,\bsigma_{0,0}^2\}$, where
\begin{align}
\label{eq:at0general1}
\bsigma_{0,0}^1 &= \cP_E\big(S_{1,1}\hatbsigma_{0,0}^0 + S_{1,2}\hatbsigma_{0,0}^1 + S_{1,3}\hatbsigma_{0,0}^2
     + S_{1,4}\hatbsigma_{0,0}^3 + S_{1,5}\hatbsigma_{0,0}^4 + S_{1,6}\hatbsigma_{0,0}^5\big),\\
\label{eq:at0general2}
\bsigma_{0,0}^2 &= \cP_E\big(S_{2,1}\hatbsigma_{0,0}^0 + S_{2,2}\hatbsigma_{0,0}^1 + S_{2,3}\hatbsigma_{0,0}^2
     + S_{2,4}\hatbsigma_{0,0}^3 + S_{2,5}\hatbsigma_{0,0}^4 + S_{2,6}\hatbsigma_{0,0}^5\big),
\end{align}
since, by \eqref{eq:bsigma0_div_flux} and \eqref{eq:Piola_div}--\eqref{eq:Piola_normal}, these
supplements satisfy the constraint of being divergence-free and produce the desired fluxes $S$ on
each face.

It remains to verify that the DOFs are independent after applying the projection.  To this end, we
note that $\bvarphi$ is not in the span of the rows of $N$.  This is true since $M\bvarphi\ne0$ (at
least one row of $M$ represents a function with a nonzero divergence), which implies that
$\bvarphi\not\in(M^T)^\perp=\text{row}(N)$.  Independence of the DOFs is a consequence of the following,
more general lemma.

\begin{lemma}\label{lem:independence_of_projection}
  Suppose that $M$ is $m\times(m+n)$, $N$ is $n\times(m+n)$, and
  $\left[\begin{matrix}M\\N\end{matrix}\right]$ is invertible.  Let $\bvarphi$ be an $(m+n)$-vector
  that does not lie in the row space of $N$.  Let the projection
  $P_\bvarphi = \dfrac{\bvarphi\bvarphi^T}{\bvarphi^T\bvarphi}$. If $S = N(I-P_\bvarphi)$, then
  $\left[\begin{matrix}M\\S\end{matrix}\right]$ is invertible.
\end{lemma}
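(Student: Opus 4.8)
The plan is to prove that the square $(m+n)\times(m+n)$ matrix $\left[\begin{matrix}M\\S\end{matrix}\right]$ is invertible by showing that it has a trivial null space. So I would take a vector $\vv$ with $M\vv=0$ and $S\vv=0$ and aim to conclude $\vv=0$. Setting the scalar $\lambda=\bvarphi^T\vv/\bvarphi^T\bvarphi$, the definition $S=N(I-P_\bvarphi)$ gives $S\vv=N\vv-\lambda N\bvarphi=N(\vv-\lambda\bvarphi)$, so the two hypotheses read $M\vv=0$ and $N(\vv-\lambda\bvarphi)=0$. Since $\lambda$ is itself assembled from $\vv$, the argument will close by extracting enough structure on $\vv$ to feed back into the definition of $\lambda$.

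The key structural input I would use is the orthogonal-complement relationship already identified in the construction, $\text{row}(N)=(\text{row}\,M)^\perp$, i.e.\ $MN^T=0$; this is precisely what makes the hypothesis $\bvarphi\notin\text{row}(N)$ equivalent to $M\bvarphi\neq0$. It also identifies the two null spaces as $\ker M=\text{row}(N)$ and $\ker N=\text{row}(M)$, and it lets me split $\bvarphi$ into orthogonal pieces $\bvarphi=\mathbf a+\mathbf b$ with $\mathbf a\in\text{row}(M)$, $\mathbf b\in\text{row}(N)$, and $\mathbf a\perp\mathbf b$. Then $M\vv=0$ places $\vv\in\text{row}(N)$, while $N(\vv-\lambda\bvarphi)=0$ places $\vv-\lambda\bvarphi\in\text{row}(M)$. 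Matching the $\text{row}(M)\oplus\text{row}(N)$ components of $\vv-\lambda\bvarphi=-\lambda\mathbf a+(\vv-\lambda\mathbf b)$ forces its $\text{row}(N)$-part to vanish, giving $\vv=\lambda\mathbf b$.

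The hard part, which I expect to be the main obstacle, is excluding a spurious nonzero solution at this last stage. Substituting $\vv=\lambda\mathbf b$ back into $\lambda=\bvarphi^T\vv/\bvarphi^T\bvarphi$ and using $\bvarphi^T\mathbf b=(\mathbf a+\mathbf b)^T\mathbf b=\|\mathbf b\|^2$, where the orthogonality $\mathbf a\perp\mathbf b$ is what kills the cross term, yields $\lambda\big(1-\|\mathbf b\|^2/\|\bvarphi\|^2\big)=0$. Because $\bvarphi\notin\text{row}(N)$ forces $\mathbf a\neq0$, we get $\|\bvarphi\|^2=\|\mathbf a\|^2+\|\mathbf b\|^2>\|\mathbf b\|^2$, so the scalar factor is strictly positive and hence $\lambda=0$ and $\vv=0$. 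I would stress that this strict positivity is the delicate point, and that it is where the orthogonality $\text{row}(N)=(\text{row}\,M)^\perp$ is indispensable: assuming only that $\left[\begin{matrix}M\\N\end{matrix}\right]$ is invertible and $\bvarphi\notin\text{row}(N)$, the quantity $\bvarphi^T\mathbf b$ is no longer pinned to $\|\mathbf b\|^2$, the factor can vanish, and the conclusion can fail, so the whole proof hinges on carrying the orthogonality through every step.
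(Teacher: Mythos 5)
Your proof is correct, and it is worth recording that it proves a \emph{corrected} version of the lemma: you add the hypothesis $\text{row}(N)=(\text{row}\,M)^\perp$ (equivalently $MN^T=0$), and you are right that without it the statement as printed is false. A concrete counterexample with $m=n=1$: take $M=[1\;\;0]$, $N=[1\;\;1]$, $\bvarphi=(0,1)^T$. Then $\left[\begin{smallmatrix}M\\N\end{smallmatrix}\right]$ is invertible and $\bvarphi\notin\text{row}(N)$, yet $S=N(I-P_\bvarphi)=[1\;\;0]=M$, so $\left[\begin{smallmatrix}M\\S\end{smallmatrix}\right]$ is singular. The paper's own proof tacitly uses the same orthogonality: its opening step, ``by a change of basis we may assume $M=[I_m\;\;0]$ and $N=[0\;\;I_n]$,'' only preserves the form of $P_\bvarphi=\bvarphi\bvarphi^T/\bvarphi^T\bvarphi$ when the change of basis is orthogonal, and an orthogonal change of basis can bring $M,N$ to that normal form only if their row spaces are orthogonal. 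In every application in the paper $N^T$ is constructed as a basis of $(M^T)^\perp$, so the extra hypothesis is harmless there, but it should be stated.

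As for the argument itself, you reach the same punchline by a different route. The paper, after reducing to $M=[I_m\;\;0]$, $N=[0\;\;I_n]$ and writing $\bvarphi=(\mathbf a;\mathbf b)$ normalized, reads off $\left[\begin{smallmatrix}M\\S\end{smallmatrix}\right]=\left[\begin{smallmatrix}I_m&0\\-\mathbf b\mathbf a^T&I_n-\mathbf b\mathbf b^T\end{smallmatrix}\right]$ and invokes $\|\mathbf b\mathbf b^T\|=\|\mathbf b\|^2<1$ (since $\mathbf a\ne0$) to see the block-triangular matrix is invertible. You instead run a coordinate-free null-space argument: $M\vv=0$ and $S\vv=0$ force $\vv\in\text{row}(N)$ and $\vv-\lambda\bvarphi\in\text{row}(M)$, the orthogonal splitting $\bvarphi=\mathbf a+\mathbf b$ gives $\vv=\lambda\mathbf b$, and the fixed-point relation $\lambda(1-\|\mathbf b\|^2/\|\bvarphi\|^2)=0$ with $\mathbf a\ne0$ yields $\vv=0$. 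The scalar $1-\|\mathbf b\|^2/\|\bvarphi\|^2>0$ is exactly the paper's $\|\mathbf b\mathbf b^T\|<1$ in disguise, so the two proofs hinge on the same quantity; yours avoids the change-of-basis step entirely (and thereby makes explicit where orthogonality is consumed), while the paper's yields a slightly more quantitative block-triangular factorization. Both are fine; your version is the more carefully justified of the two.
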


\begin{proof}
  By a change of basis, we may assume that $M=\left[\begin{matrix}I_m&0\end{matrix}\right]$ and
  $N=\left[\begin{matrix}0&I_n\end{matrix}\right]$.  Normalize and partition
  $\bvarphi=\left(\begin{matrix}{\mathbf a}\\{\mathbf b}\end{matrix}\right)$ into $m$- and $n$-subvectors.  Now the
  projection in block form is
\begin{equation*}
P_\bvarphi = \left[\begin{matrix}{\mathbf a}{\mathbf a}^T&{\mathbf a}{\mathbf b}^T\\
{\mathbf b}{\mathbf a}^T&{\mathbf b}{\mathbf b}^T\end{matrix}\right],
\end{equation*}
and $S=\left[\begin{matrix}-{\mathbf b}{\mathbf a}^T&I_n-{\mathbf b}{\mathbf b}^T\end{matrix}\right]$.  Since $\|{\mathbf b}{\mathbf b}^T\|<1$ (recall
${\mathbf a}\ne0$), we conclude that $I_n-{\mathbf b}{\mathbf b}^T$ is invertible, and thus also
$\left[\begin{matrix}I_m&0\\-{\mathbf b}{\mathbf a}^T&I_n-{\mathbf b}{\mathbf b}^T\end{matrix}\right] =
\left[\begin{matrix}M\\S\end{matrix}\right]$.
\end{proof}

\subsection{The case $r=1$}\label{sec:at1}

We concentrate on the reduced space $\V_1^\red(E)=\Po_1^3\oplus\Su_1$, since we merely add
$\x\tilde\Po_1$ to define $\V_1(E)$.  The divergence of $\V_1^\red(E)$ is constant as in the case
$r=0$, but now the normal face fluxes are linear, so there are 18 of them in total.  Since
$\dim\Po_1^3=12$, we need 6 supplements.

Please recall the notation from Fig.~\ref{fig:geometry}. We can view the hexahedron as containing a tetrahedron nestled in the corner near $\x_{024}$, i.e., the tetrahedron with the four vertices $\x_{024}$, $\x_{124}$, $\x_{034}$, and $\x_{025}$.  The usual BDM (i.e., BDDF) space on tetrahedra \cite{BDDF_1987} is $\Po_1^3$, so we know that we can set the fluxes independently on the faces $0$, $2$, and $4$ \Add by polynomial vector functions (since these fluxes are independent degrees of freedom for the tetrahedral element $\Po_1^3\subset\V_1^\red(E)$). To find these functions, we first \endAdd define the six linear functions
\begin{equation}
  \lambda_i(\x) = -(\x - \x_i)\cdot\nu_i,\quad i=0,1,\ldots,5,
\end{equation}
and the linear function associated with the plane $f_6$ through $\x_{124}$, $\x_{034}$, and $\x_{025}$,
\begin{equation}
  \lambda_6(\x) = -(\x - \x_6)\cdot\nu_6,
\end{equation}
where $\x_6$ lies on $f_6$ and $\nu_6$ is the unit normal pointing {\em into} the tetrahedron.

Since $\grad\lambda_i=-\nu_i$, we have that
\begin{equation*}
\curl(\lambda_i\lambda_j\nu_k) = -\lambda_i\nu_j\times\nu_k - \lambda_j\nu_i\times\nu_k,
\end{equation*}
which has no normal flux on faces $i$, $j$, and $k$.  \Add As we show below, \endAdd we can
independently set the 9 fluxes on the faces 0, 2, and 4, respectively, by the functions
\begin{alignat}3
\bpsi_{0} &= \x - \x_{124},
&\quad\bpsi_{1} &= \curl(\lambda_2\lambda_6\nu_4),
&\quad\bpsi_{2} &= \curl(\lambda_4\lambda_6\nu_2),\\
\bpsi_{3} &= \x - \x_{034},
&\quad\bpsi_{4} &= \curl(\lambda_0\lambda_6\nu_4),
&\quad\bpsi_{5} &= \curl(\lambda_4\lambda_6\nu_0),\\
\bpsi_{6} &= \x - \x_{025},
&\quad\bpsi_{7} &= \curl(\lambda_0\lambda_6\nu_2),
&\quad\bpsi_{8} &= \curl(\lambda_2\lambda_6\nu_0).
\end{alignat}
The rest of the polynomial space is associated to $f_6$, and consists of the functions
\begin{equation}
\bpsi_{9} = \x - \x_{024},
\quad\bpsi_{10} = \curl(\lambda_0\lambda_2\nu_4),
\quad\bpsi_{11} = \curl(\lambda_2\lambda_4\nu_0).
\end{equation}

It is convenient for the discussion to map $E$ to a simpler shape $\tilde E$ using an affine map.
In the case of an affine map, no polynomial spaces are changed, so conclusions about fluxes on
$\dd\tilde E$ hold for $\dd E$.  We take $\tilde E$ as in Fig.~\ref{fig:geometry}, but it is the
result of a translation that makes $\x_{024}=0$.  Rotations, dilations, and shear maps can then
make $\x_{124}=\e_1$, $\x_{034}=\e_2$, and $\x_{025}=\e_3$.  We proceed as if $E=\tilde E$.  Then
\begin{alignat*}4
\nu_0&=-\e_1,&\quad\nu_2&=-\e_2,&\quad\nu_4&=-\e_3,&\quad\nu_6&=-(\e_1+\e_2+\e_3)/\sqrt{3},\\
\lambda_0&=x_1,&\quad\lambda_2&=x_2,&\quad\lambda_4&=x_3,&\quad\lambda_6&=(x_1+x_2+x_3-1)/\sqrt{3}.
\end{alignat*}
Thus, for face~0,
\begin{equation}
\bpsi_{0} = \threeVec{x_1-1}{x_2}{x_3}\!,
\ \ \bpsi_{1} = \frac1{\sqrt{3}}\!\!\threeVec{1-x_1-2x_2-x_3}{x_2}{0}\!,
\ \ \bpsi_{2} = \frac1{\sqrt{3}}\!\!\threeVec{x_1+x_2+2x_3-1}{0}{-x_3}\!,
\end{equation}
and so we compute the columns of $\cF$ for faces 0, 2, and 4 as
\begin{equation}\def\emDash{\text{---}}
\cF_{024}(\bpsi_{0},\bpsi_{1},\bpsi_{2}) = \left[\begin{matrix}
1&\ 0&\ 0\\
2x_2+x_3-1&\ 0&\ 0\\
1-x_2-2x_3&\ 0&\ 0
\end{matrix}\right].
\end{equation}
The other two triples, $(\bpsi_{3}, \bpsi_{4}, \bpsi_{5})$ for face~2 and
$(\bpsi_{6}, \bpsi_{7}, \bpsi_{8})$ for face~4, are similar, so we conclude that indeed these 9
functions independently set the 9 fluxes on the faces 0, 2, and 4.

For the other three faces 1, 3, and 5, we have that
\begin{equation}
\bpsi_{9} = \threeVec{x_1}{x_2}{x_3},
\quad\bpsi_{10} = \threeVec{-x_1}{x_2}{0},
\quad\bpsi_{11} = \threeVec{0}{-x_2}{x_3}.
\end{equation}
Note that these three functions have no normal flux on faces 0, 2, and 4.  
\Add
In the following discussion, for simplicity, we replace $\bpsi_{9}$, $\bpsi_{10}$, and $\bpsi_{11}$
with $\bpsi_{9}^*$, $\bpsi_{10}^*$, and $\bpsi_{11}^*$ where
\begin{equation}
\bpsi_{9}^* = \threeVec{x_1}{0}{0},
\quad\bpsi_{10}^* = \threeVec{0}{x_2}{0},
\quad\bpsi_{11}^* = \threeVec{0}{0}{x_3}.
\end{equation}
We can do this because
\begin{equation}
\frac13\left(
\begin{matrix}
x_1 & -x_1 & 0\\
x_2 & x_2 & -x_2\\
x_3 & 0 & x_3
\end{matrix}
\right)\left(
\begin{matrix}
1 & 1 & 1\\
-2 & 1 & 1\\
-1 & -1 & 2
\end{matrix}
\right) = \left(
\begin{matrix}
x_1 & 0 & 0\\
0 & x_2 & 0\\
0 & 0 & x_3
\end{matrix}
\right),
\end{equation}
and the transformation matrix is invertible, so
$\bpsi_{9}$, $\bpsi_{10}$, and $\bpsi_{11}$ span the same space as
$\bpsi_{9}^*$, $\bpsi_{10}^*$, and $\bpsi_{11}^*$.
Therefore,
\begin{align}\label{eq:flux9-11}
\cF_{135}(\bpsi_{9}^*,\bpsi_{10}^*,\bpsi_{11}^*)
= \left[\begin{matrix}
x_1\nu_{1,1}\ &\ x_1\nu_{3,1}\ &\ x_1\nu_{5,1}\\
x_2\nu_{1,2}&x_2\nu_{3,2}&x_2\nu_{5,2}\\
x_3\nu_{1,3}&x_3\nu_{3,3}&x_3\nu_{5,3}
\end{matrix}\right].
\end{align}
\endAdd
We must add supplements \Add to the set $\{\bpsi_{9}^*,\bpsi_{10}^*,\bpsi_{11}^*\}$ that also have
no normal flux on faces 0, 2, and 4.  Moreover, the normal fluxes of the supplements on the
remaining three faces, when combined with \eqref{eq:flux9-11}, must  independently span the spaces
of linear polynomials. There are at least two ways to choose the supplements, a non-symmetric way and a symmetric way. 

\begin{theorem}[Non-Symmetric supplements]\label{thm:simpleAT1fluxes-nonsymm}
There exist constants $s$ and $t$ such that if the supplemental functions
$\bsigma_0$ to $\bsigma_3$, $\bsigma_4^*$, and $\bsigma_5^*$ are defined to take the fluxes
\begin{equation}
\label{eq:simpleAT1fluxes-nonsymm}
\cF_{135}(\bsigma_0,\bsigma_1,\bsigma_2,\bsigma_3,\bsigma_4^*,\bsigma_5^*)
= \left[\begin{matrix}
x_2-c_{2}^1&0&0\\
x_3-c_{3}^1&0&0\\
0&x_1-c_{1}^3&0\\
0&x_3-c_{3}^3&0\\
(-|f_5|c_1^5+t)/|f_1|& -t/|f_3| &x_1\\
-s/|f_1|&(-|f_5|c_2^5+s)/|f_3|&x_2
\end{matrix}\right],
\end{equation}
where the constant $c_\ell^i$ is the average over face~$i$ of the variable~$x_\ell$,
then they provide independent flux degrees of freedom.
\end{theorem}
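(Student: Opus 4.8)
The plan is to reduce the claimed independence to a single $2\times2$ determinant condition and then to use $s$ and $t$ to arrange that it holds. First I would confirm that the six supplements actually exist with the fluxes prescribed in \eqref{eq:simpleAT1fluxes-nonsymm}. Each must be divergence-free with vanishing flux on faces $0$, $2$, $4$; by the construction of Section~\ref{sec:supplements}, applied face by face, such a function exists precisely when its prescribed flux integrates to zero over $\dd E$. Using $\int_{f_i}x_\ell=|f_i|\,c_\ell^i$ together with the area-weighted entries of \eqref{eq:simpleAT1fluxes-nonsymm}, I would verify that every row indeed has zero total flux — this is exactly why the averages $c_\ell^i$ are subtracted and why the $s$- and $t$-dependent constants on faces $1$ and $3$ are balanced against the fluxes $x_1$ and $x_2$ on face~$5$ (note that the $s,t$-terms cancel in the column sums). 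Since the nine fluxes on faces $0$, $2$, $4$ are already set independently by $\bpsi_0,\dots,\bpsi_8$, it remains to show that the nine functions $\bpsi_9^*,\bpsi_{10}^*,\bpsi_{11}^*,\bsigma_0,\dots,\bsigma_3,\bsigma_4^*,\bsigma_5^*$ have independent fluxes on faces $1$, $3$, $5$, i.e.\ that the combined $9\times9$ flux matrix built from \eqref{eq:flux9-11} and \eqref{eq:simpleAT1fluxes-nonsymm} is invertible.

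The key geometric fact is that after the affine normalization $\x_{024}$ is the origin, and by convexity of $E$ it lies strictly off the planes of faces $1$, $3$, $5$. Since those planes pass through $\e_1$, $\e_2$, $\e_3$ respectively, this says exactly that $\nu_{1,1},\nu_{3,2},\nu_{5,3}>0$; consequently the restrictions of $x_1,x_2,x_3$ to each of $f_1$, $f_3$, $f_5$ form a basis of the corresponding $\Po_1(f_i)$. I would then take a general combination of the nine functions with coefficients $a_9,a_{10},a_{11},b_0,\dots,b_5$ and require zero flux on all three faces. Expanding the face-$5$ flux in the basis $\{x_1,x_2,x_3\}$ gives at once $a_{11}=0$ (using $\nu_{5,3}\neq0$) and $b_4=-a_9\nu_{5,1}$, $b_5=-a_{10}\nu_{5,2}$.

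Next I would split $\Po_1(f_1)$ and $\Po_1(f_3)$ into their mean and zero-mean-linear parts. On each such face the three functions $x_k-c_k^i$ satisfy the single relation $\sum_k\nu_{i,k}(x_k-c_k^i)=0$, so forcing the zero-mean part of the flux to vanish makes the coefficient vector proportional to $\nu_i$; with $\nu_{1,1},\nu_{3,2}\neq0$ this expresses $b_0,b_1,b_2,b_3$ in terms of $a_9,a_{10}$. The two mean equations then collapse to a homogeneous system $\left[\begin{smallmatrix}P&Q\\R&T\end{smallmatrix}\right]\left(\begin{smallmatrix}a_9\\a_{10}\end{smallmatrix}\right)=0$, whose entries are explicit affine functions of the face data and of $s,t$. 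If $PT-QR\neq0$ then $a_9=a_{10}=0$, hence $b_4=b_5=0$, and in turn all remaining coefficients vanish; thus independence of the fluxes is \emph{equivalent} to $PT-QR\neq0$.

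Finally I would choose $s,t$ so that $PT-QR\neq0$. A short computation shows the bilinear $st$-contributions from $PT$ and $QR$ coincide and cancel, so $PT-QR$ is an \emph{affine} function of $(s,t)$; it is therefore nonzero for suitable $s,t$ as soon as it is not identically zero. The main obstacle is precisely this last point: ruling out simultaneous vanishing of the constant term and of the coefficients of $s$ and $t$. The $s$- and $t$-coefficients carry factors $\nu_{5,2}$ and $\nu_{5,1}$, so the only dangerous configuration is when $\nu_5$ is axial, and there one must instead show the constant term $P_0T_0-Q_0R_0$ is nonzero; I expect to settle this from convexity, and this is the step I would relegate to an appendix. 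As a sanity check, for the reference cube $\nu_5=\e_3$ kills both linear coefficients while $P_0T_0-Q_0R_0=1$, so $s=t=0$ already suffices.
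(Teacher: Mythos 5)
Your reduction is sound and, modulo packaging, is the same as the paper's: you arrive (by a kernel argument on a general linear combination, where the paper instead row-reduces the $9\times9$ matrix $\cF_{135}^*(\bpsi_9^*,\ldots,\bsigma_5^*)$ and invokes Silvester's block-determinant lemma) at exactly the $2\times2$ determinant $d(s,t)$ of \eqref{eq:red2}, and your observation that the $st$-terms cancel so that $d$ is affine in $(s,t)$ is correct. The existence check for the supplements (zero total flux per row) and the identification $b_4=-a_9\nu_{5,1}$, $b_5=-a_{10}\nu_{5,2}$, $a_{11}=0$ from the face-5 equation all check out, as does the use of the relation $\sum_k\nu_{i,k}(x_k-c_k^i)=0$ on faces 1 and 3.

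The genuine gap is in your last step, in the non-axial case. You write that ``the $s$- and $t$-coefficients carry factors $\nu_{5,2}$ and $\nu_{5,1}$, so the only dangerous configuration is when $\nu_5$ is axial.'' That inference is not valid as stated: the coefficient of $s$ in $d(s,t)$ is $\nu_{5,2}$ \emph{times a cofactor}, namely
\begin{equation*}
-\nu_{5,2}\Big(\frac{P_0}{|f_3|}+\frac{R_0}{|f_1|}\Big)
=-\frac{\nu_{5,2}}{|f_1|\,|f_3|}\big(|f_1|c_1^1\nu_{1,1}+|f_3|c_1^3\nu_{3,1}+|f_5|c_1^5\nu_{5,1}\big),
\end{equation*}
and since $\nu_{3,1}$ and $\nu_{5,1}$ can be negative, this cofactor is not obviously nonzero; a priori you could have $\nu_{5,2}\neq0$ with both linear coefficients and the constant term of $d$ vanishing simultaneously, and your argument does not exclude this. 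The missing idea is the divergence theorem: the bracketed sum equals $\int_{\partial\tilde E}x_1\,\e_1\cdot\nu\,dA=\int_{\tilde E}\div(x_1\e_1)\,dV=|\tilde E|>0$ (the faces $0$, $2$, $4$ contribute nothing since $x_1=0$ on face $0$ and $\e_1\cdot\nu=0$ on faces $2$ and $4$), which is exactly how the paper's Case~2 shows $d(0,0)\neq d(|f_5|c_2^5,0)$ and hence $d\not\equiv0$. Once you add this identity your argument closes. Your remaining deferral --- the axial case $\nu_5=\e_3$, where one must show the constant term $c_1^1\nu_{1,1}c_2^3\nu_{3,2}-c_2^1\nu_{1,2}c_1^3\nu_{3,1}>0$ from convexity --- coincides with the paper's Case~1 and does require the Appendix~\ref{sec:appa} machinery (Theorem~\ref{thm:minors} and Lemma~\ref{lem:a4}), so flagging it as appendix work is appropriate, but be aware it is a nontrivial piece of the proof rather than a routine convexity remark.
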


\begin{theorem}[Symmetric supplements]\label{thm:simpleAT1fluxes-symm}
Let the supplemental functions $\bsigma_0$ to $\bsigma_5$ take the fluxes
\begin{equation}
\label{eq:simpleAT1fluxes-symm}
\cF_{135}(\bsigma_0,\bsigma_1,\bsigma_2,\bsigma_3,\bsigma_4,\bsigma_5)
= \left[\begin{matrix}
x_2-c_{2}^1&0&0\\
x_3-c_{3}^1&0&0\\
0&x_1-c_{1}^3&0\\
0&x_3-c_{3}^3&0\\
0&0&x_1-c_{1}^5\\
0&0&x_2-c_{2}^5
\end{matrix}\right],
\end{equation}
where the constant $c_\ell^i$ is the average over face~$i$ of the variable~$x_\ell$.
These provide independent flux degree of freedoms as long as the matrix
\begin{equation}\label{eq:cnu}
\C\circ\hh = \left[\begin{matrix}
c_1^1\nu_{1,1}\ &\ c_1^3\nu_{3,1}\ &\ c_1^5\nu_{5,1}\\
c_2^1\nu_{1,2}&c_2^3\nu_{3,2}&c_2^5\nu_{5,2}\\
c_3^1\nu_{1,3}&c_3^3\nu_{3,3}&c_3^5\nu_{5,3}
\end{matrix}\right]
\end{equation}
is invertible.
\end{theorem}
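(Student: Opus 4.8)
The plan is to show that the flux operator $\cF$ restricted to the $18$-dimensional space $\V_1^\red(E)=\Po_1^3\oplus\Su_1$ is injective, hence an isomorphism onto $\prod_{i=0}^5\Po_1(f_i)$. Since the divergence degrees of freedom \eqref{eq:div_DOFs} are empty for the reduced index-$1$ space (the test space is $\Po_0^*=\{0\}$), injectivity of $\cF$ is exactly the asserted independence of the flux DOFs. I would order the $18$ basis functions as $\bpsi_0,\dots,\bpsi_8$ (associated with faces $0,2,4$), followed by $\bpsi_{9}^*,\bpsi_{10}^*,\bpsi_{11}^*,\bsigma_0,\dots,\bsigma_5$, and split the columns of $\cF$ into those on the even faces and those on the odd faces. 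Because $\bpsi_{9}^*,\bpsi_{10}^*,\bpsi_{11}^*$ and all six supplements have vanishing normal flux on faces $0,2,4$, the flux matrix is block upper triangular,
\[
\begin{bmatrix}\cF_{024}(\bpsi_0,\dots,\bpsi_8)&\cF_{135}(\bpsi_0,\dots,\bpsi_8)\\[2pt]0&D\end{bmatrix},
\quad D=\cF_{135}(\bpsi_{9}^*,\bpsi_{10}^*,\bpsi_{11}^*,\bsigma_0,\dots,\bsigma_5).
\]
The top-left block is already known to be invertible from the tetrahedral (BDDF) argument given just above the theorem, so the whole matrix is invertible precisely when the $9\times9$ block $D$ is, and the proof reduces to establishing invertibility of $D$.

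To analyze $D$, I would suppose a linear combination $\sum_{k=9}^{11}a_k\bpsi_k^*+\sum_{j=0}^5 b_j\bsigma_j$ has vanishing normal flux on each of faces $1,3,5$, and decompose each face flux into its average (constant) part and its mean-zero (genuinely linear) part. By \eqref{eq:flux9-11} the contribution of $\bpsi_{9}^*,\bpsi_{10}^*,\bpsi_{11}^*$ to the flux on face $i\in\{1,3,5\}$ is $a_9 x_1\nu_{i,1}+a_{10}x_2\nu_{i,2}+a_{11}x_3\nu_{i,3}$, while by \eqref{eq:simpleAT1fluxes-symm} each $\bsigma_j$ contributes a function of the form $x_\ell-c_\ell^i$, which by the definition of $c_\ell^i$ as the face average of $x_\ell$ has zero mean. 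Averaging over each odd face therefore annihilates all supplement terms and leaves the $3\times3$ homogeneous system
\[
(\C\circ\hh)^T\,(a_9,a_{10},a_{11})^T=0,
\]
whose coefficient matrix is exactly the transpose of \eqref{eq:cnu}. The hypothesis that $\C\circ\hh$ is invertible then forces $a_9=a_{10}=a_{11}=0$.

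With the $a_k$ eliminated, the flux on face $1$ collapses to $b_0(x_2-c_2^1)+b_1(x_3-c_3^1)$, and similarly on faces $3$ and $5$. On each odd face the two variables appearing are precisely the local coordinates chosen for that face (the two ambient coordinates surviving after the one with maximal normal component is omitted), so $x_2-c_2^1$ and $x_3-c_3^1$ are linearly independent as functions on $f_1$, and likewise on $f_3$ and $f_5$; hence all six coefficients $b_j$ vanish. This shows $D$ has trivial kernel and is invertible, which by the block-triangular structure completes the proof.

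I expect the only delicate point to be the bookkeeping in the average/linear-part splitting, namely verifying that the supplement fluxes are genuinely mean-zero so that the averaging step isolates exactly the matrix $\C\circ\hh$, and confirming that the two surviving coordinate functions on each odd face really are independent. The existence of divergence-free supplements realizing the prescribed mean-zero fluxes is not an obstacle here, since it is already guaranteed by the construction of Section~\ref{sec:supplements} (the constant $c_\ell^i$ is chosen so that the flux has zero integral over the face, which is exactly what the divergence theorem requires).
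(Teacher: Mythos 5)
Your proof is correct, and it reaches the same $3\times3$ matrix $\C\circ\hh$ by what is essentially the dual form of the paper's argument. The paper (Appendix~\ref{sec:appb}) writes the $9\times9$ odd-face flux matrix explicitly in the local coordinate bases $\{1,x_2,x_3\}$, $\{1,x_1,x_3\}$, $\{1,x_1,x_2\}$ and then row-reduces: it uses the supplement rows to cancel the nonconstant parts of the fluxes of $\bpsi_9^*,\bpsi_{10}^*,\bpsi_{11}^*$, invoking the identity $c_1^1\nu_{1,1}=\nu_{1,1}-c_2^1\nu_{1,2}-c_3^1\nu_{1,3}$ (the centroid lies on the face plane) to identify the surviving constant entries as $\C\circ\hh$, with an identity block in the complement. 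You instead test a putative kernel element against the constant function on each odd face; since the supplement fluxes $x_\ell-c_\ell^i$ are mean-zero by construction, averaging isolates $(\C\circ\hh)^T$ acting on $(a_9,a_{10},a_{11})$, and the remaining coefficients are killed by independence of the two coordinate functions on each face. These are the same computation (your face averages are exactly the centroid values the paper's row operations produce), but your version avoids writing out the $9\times9$ matrix and makes the role of the mean-zero normalization transparent, at the cost of a transpose. One small point to tighten: the independence of $\{1,x_2,x_3\}$ on $f_1$ (and hence of $x_2-c_2^1$, $x_3-c_3^1$) is not a consequence of the ``omit the maximal normal component'' rule you cite, but of the fact that $\nu_{1,1}>0$ for the normalized element $\tilde E$, which the paper supplies via Theorem~\ref{thm:minors}; the paper relies on the same fact when it asserts $\spn\{1,x_2,x_3\}=\Po_1(f_1)$, so no new work is needed, but the correct citation is to convexity rather than to the local-variable selection heuristic.
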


The proofs of Theorems~\ref{thm:simpleAT1fluxes-nonsymm}
and~\ref{thm:simpleAT1fluxes-symm} appear in
Appendices~\ref{sec:appc} and~\ref{sec:appb}, respectively.
The invertibility of matrix $\C\circ\hh$ in \eqref{eq:cnu} is discussed in Appendix~\ref{sec:appa}.
We remark that we have not seen a perturbed hexahedron in practice that violates the
invertibility condition. In Appendix~\ref{sec:appa}, we prove the invertibility 
condition \eqref{eq:cnu}, i.e., Theorem~\ref{thm:2parallelAndPillars} below, in two special cases: hexahedra with at least one pair of faces being parallel and truncated pillars.

\begin{definition} \label{defn:pillar}
A cuboidal hexahedron $E$ is a truncated pillar if four of its twelve edges are parallel.
These four edges form the pillar. If they are extended to infinity,
the other two faces of $E$ are formed by truncating the pillar.
\end{definition}

\begin{theorem}\label{thm:2parallelAndPillars}
If $E$ is a cuboidal hexahedron that either has two pair of faces
being parallel or is a truncated pillar, then \eqref{eq:cnu} holds.
\end{theorem}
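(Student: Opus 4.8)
The plan is to exploit an affine normalization together with the parallelism hypotheses to force a block-triangular structure in the matrix $\C\circ\hh$ of \eqref{eq:cnu}, thereby reducing its determinant to a product of manifestly nonzero factors (and, in the pillar case, one genuinely two-dimensional determinant). First I would reuse the affine normalization of Section~\ref{sec:at1}, so that $\x_{024}=0$, $\x_{124}=\e_1$, $\x_{034}=\e_2$, $\x_{025}=\e_3$, and faces $0$, $2$, $4$ carry the outer normals $-\e_1$, $-\e_2$, $-\e_3$; since affine maps change no polynomial space, invertibility of $\C\circ\hh$ in this frame is equivalent to the original. In this frame, convexity of $E$ immediately yields the sign facts I will use repeatedly: the diagonal data is strictly positive, namely $c_1^1,\,c_2^3,\,c_3^5>0$ (the odd faces lie on the positive side of the coordinate planes through the opposite faces) and $\nu_{1,1},\,\nu_{3,2},\,\nu_{5,3}>0$ (their outer normals point away from those coordinate planes). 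Since permuting the coordinates simultaneously permutes the face-label pairs $(0,1)$, $(2,3)$, $(4,5)$, I may always relabel to put any distinguished pair of opposite faces into a convenient position.

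For the two-parallel-pairs case, relabel so the parallel pairs are $(0,1)$ and $(2,3)$. Parallelism forces face~$1$ to be the plane $x_1=1$ (it is parallel to the coordinate plane $x_1=0$ of face~$0$ and contains $\x_{124}=\e_1$), hence $\nu_1=\e_1$ and likewise $\nu_3=\e_2$. Then $\nu_{1,2}=\nu_{1,3}=0$ and $\nu_{3,1}=\nu_{3,3}=0$, so the first two columns of $\C\circ\hh$ reduce to $(c_1^1,0,0)^T$ and $(0,c_2^3,0)^T$. The matrix is therefore upper triangular with diagonal $c_1^1,\,c_2^3,\,c_3^5\nu_{5,3}$, all nonzero, so $\det(\C\circ\hh)\ne0$. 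The other two choices of parallel pairs are identical after relabeling.

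For the truncated pillar, relabel so the four parallel edges run in the $\e_3$ direction (the normalization already forces the edge $\x_{024}\x_{025}$ to be parallel to $\e_3$, so the pillar direction must be $\e_3$). The four side faces $0$, $1$, $2$, $3$ are then parallel to $\e_3$, so their outer normals are orthogonal to $\e_3$; in particular $\nu_{1,3}=\nu_{3,3}=0$. Consequently the bottom row of $\C\circ\hh$ equals $(0,0,c_3^5\nu_{5,3})$, and expanding along it gives $\det(\C\circ\hh)=c_3^5\nu_{5,3}\,\Delta$ with $\Delta=\det\!\left[\begin{smallmatrix}c_1^1\nu_{1,1}&c_1^3\nu_{3,1}\\ c_2^1\nu_{1,2}&c_2^3\nu_{3,2}\end{smallmatrix}\right]$, where $c_3^5\nu_{5,3}\ne0$ as above.

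The remaining, and genuinely delicate, step is to show $\Delta\ne0$; this is the main obstacle, and it is an intrinsically two-dimensional statement. Projecting $E$ along the pillar axis collapses the four vertical edges to the vertices of a convex quadrilateral with corners $(0,0)$, $(1,0)$, $(e,f)$, $(0,1)$, whose two far edges are the projections of faces~$1$ and~$3$; their in-plane outer normals are $(\nu_{1,1},\nu_{1,2})$ and $(\nu_{3,1},\nu_{3,2})$ and their centroid coordinates are $(c_1^1,c_2^1)$ and $(c_1^3,c_2^3)$, so $\Delta$ is exactly the two-dimensional analogue of $\C\circ\hh$ for this quadrilateral. I would establish $\Delta\ne0$ by a direct convexity computation: writing the two far-edge normals in terms of the corner $(e,f)$ and using positivity of the centroid coordinates, the diagonal contribution $c_1^1\nu_{1,1}\,c_2^3\nu_{3,2}$ is strictly positive, while the off-diagonal contribution is a positive multiple of $(1-e)(1-f)$; the convexity constraints on the admissible position of $(e,f)$ then pin down the sign of $\Delta$. (Equivalently, one may invoke the two-dimensional Arbogast--Correa theory for the cross-sectional quadrilateral.) I expect the bookkeeping of the normalizing factors of the unit normals to be the only subtle point, since they obstruct a naive term-by-term comparison and must be tracked carefully to conclude $\Delta\ne0$.
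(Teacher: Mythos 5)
Your first case (two pairs of parallel faces) is correct and complete: after the affine normalization, parallelism forces $\nu_1=\e_1$ and $\nu_3=\e_2$, so $\C\circ\hh$ becomes upper triangular with diagonal $c_1^1,\,c_2^3,\,c_3^5\nu_{5,3}>0$. This is actually more elementary than the paper's route, which proves the stronger statement that a \emph{single} parallel pair suffices (Theorem~\ref{thm:a6}) by combining positivity of all principal minors of $\hh$ (Theorem~\ref{thm:minors}) with positivity of a $2\times2$ minor of $\C$ (Lemma~\ref{lem:a4}).

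The pillar case, however, has a genuine gap at exactly the step you flag as delicate. The reduction to $\det(\C\circ\hh)=c_3^5\nu_{5,3}\,\Delta$ matches the paper, and in your cross-sectional picture one has $(\nu_{1,1},\nu_{1,2})\propto(f,1-e)$ and $(\nu_{3,1},\nu_{3,2})\propto(1-f,e)$, so $\Delta$ is a positive multiple of $c_1^1c_2^3\,ef-c_2^1c_1^3\,(1-e)(1-f)$. But the assertion that ``the convexity constraints on the admissible position of $(e,f)$ pin down the sign'' is false as stated: convexity of the cross-section gives only $e>0$, $f>0$, $e+f>1$, and for $0<e,f<1$ with $e+f>1$ both the diagonal and off-diagonal contributions are strictly positive, so the sign is not determined by $(e,f)$ together with mere positivity of the centroid coordinates. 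The missing ingredient is the inequality $c_1^1c_2^3>c_2^1c_1^3$, which is precisely what the paper supplies (Lemma~\ref{lem:a4} and the first step of Theorem~\ref{thm:pillar}): the projections $\cPj^3(\cc^1)$ and $\cPj^3(\cc^3)$ lie on the open segments from $(1,0)$ to $(e,f)$ and from $(e,f)$ to $(0,1)$, hence are strictly ordered in angle about the origin. Combined with $ef>(1-e)(1-f)$ (i.e.\ $e+f>1$, the positivity of the corresponding principal minor of $\hh$), the product of the two inequalities gives $\Delta>0$ exactly as in the paper's Theorem~\ref{thm:a6}. (The parenthetical fallback of ``invoking the two-dimensional Arbogast--Correa theory'' is not a substitute; that theory does not assert invertibility of this particular $2\times2$ matrix.) If you prefer a self-contained computation, write $\cPj^3(\cc^1)=(1-t)(1,0)+t(e,f)$ and $\cPj^3(\cc^3)=(1-s)(0,1)+s(e,f)$ with $s,t\in(0,1)$; then
\begin{equation*}
\Delta\ \propto\ ef\bigl(1-t(1-e)-s(1-f)\bigr)\ >\ 0,
\end{equation*}
the positivity following from $e+f>1$ by a short case check. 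Some such argument, or the paper's projected-centroid lemma, must be added to close the proof.
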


Meshes of cuboidal hexahedra with at least one pair of faces being parallel are
used in many applications. For any cuboidal hexahedron $E$ with flat faces, it is
easy to check this condition without transformation to $\tilde E$.
For example, the mesh $\cT_h^2$ in Section~\ref{sec:numerics} satisfies this condition.

Meshes of truncated pillars are widely used.
For example, in reservoir simulation and geological modeling, it is very common that the dataset
is given in the corner-point grid format~\cite{Ponting_1989}. 
The grid format gives a set of pillar lines which
run from the top to the bottom of the model and, in many cases, the lines are vertical.
The mesh $\cT_h^3$ in Section~\ref{sec:numerics} is an example of a grid
made by truncated vertical pillars. 

The vector functions providing the fluxes we require in (\ref{eq:simpleAT1fluxes-nonsymm})
and (\ref{eq:simpleAT1fluxes-symm}) can be easily obtained using the functions
$\bsigma_{\ell,m}^1$ (\ref{eq:desired_bsigma}) and $\bsigma_{0,0}^{i,j}$ (\ref{eq:bsigma0})
defined in Section~\ref{sec:supplements}. For example, $\bsigma_0$ here is exactly 
$\bsigma_{1,0}^1$ of (\ref{eq:desired_bsigma}), and
\begin{align*}
\cF_{135}(\bsigma_4^*) &= \left[(-|f_5|c_1^5+t)/|f_1|\quad -t/|f_3| \quad x_1\right]\\
&= \left[0 \quad 0 \quad x_1-c_1^5\right]
+ |f_5|c_1^5\left[-\frac{1}{|f_1|} \quad 0 \quad \frac{1}{|f_5|}\right]
+ t\left[\frac{1}{|f_1|} \quad -\frac{1}{|f_3|} \quad 0\right]\\
&= \cF_{135}\left(\bsigma_{1,0}^5\right) + 
|f_5|c_1^5 \cF_{135}\left(\bsigma_{0,0}^{5,1}\right)
+ t\cF_{135}\left(\bsigma_{0,0}^{1,3}\right),
\end{align*}
so $\bsigma_4^* = \bsigma_{1,0}^5 + |f_5|c_1^5\bsigma_{0,0}^{5,1} + t\bsigma_{0,0}^{1,3}\myStrut$.

In conclusion, if we know that $\C\circ\hh$ is invertible for the meshes used, 
we can apply the symmetric supplements. 
On the other hand, one can always take the non-symmetric supplements 
for any mesh, provided $s$ and $t$ are chosen properly.
\endAdd
A general method for handling $r=1$ is contained in the next subsection.

\subsection{The general case $r\ge1$}\label{sec:at}

In general, the DOFs of our mixed finite element spaces are allocated as 
\begin{equation}
\begin{alignedat}2
\V_r(E) &= \Po_r^3\oplus\x\tilde\Po_r\oplus\Su_r &&= \Ed_r\oplus\Di_r\oplus\Bu_r\\
\text{or}\quad
\V_r^{\red}(E) &= \Po_r^3\oplus\Su_r &&= \Ed_r\oplus\Di_r^{\red}\oplus\Bu_r.
\end{alignedat}
\end{equation}
Here $\Ed_r$ are the functions that have constant divergence and independently cover the normal flux
DOFs \eqref{eq:flux_DOFs}.  The functions in $\Di_r$ or $\Di_r^{\red}$ match the (nonconstant)
divergence DOFs \eqref{eq:div_DOFs}. One of these functions can be constructed from a basis function
in $\x\tilde\Po_r^*$ or $\x\tilde\Po_{r-1}^*$, respectively, but then modified by the functions in
$\Ed_r$ to remove the face normal fluxes.  Finally, the divergence-free bubbles $\Bu_r$ are left
over, and provide the final set of DOFs.  Since $\Po_r^3=\Curl\Po_{r+1}^3\oplus\x\Po_{r-1}$, we
conclude that
\begin{equation}
\Ed_r\oplus\Bu_r=\Curl\Po_{r+1}^3\oplus\x\Po_0\oplus\Su_r.
\end{equation}
Thus, our task is to construct the supplemental space $\Su_r$ of functions with zero divergence so
that the normal flux DOFs \eqref{eq:flux_DOFs} in $\Curl\Po_{r+1}^3\oplus\x\Po_0\oplus\Su_r$ are
independent.

Cockburn and Fu~\cite{Cockburn_Fu_2017_mDecompIII} determined the minimal number of supplemental
functions (which they call ``filling functions'') needed to produce the space $\Ed_r$ on various
elements, including a cuboidal hexahedron.  In particular, \cite[Lemma~4.6 and
Theorems~2.10--2.15]{Cockburn_Fu_2017_mDecompIII} identify the fluxes required (but note that they
label the faces counting from 1 rather than 0).  Their construction is to obtain supplements that
have no flux on faces~0,~1, and~2.  They specify the needed fluxes on face~3, but allow any flux on
the last two faces.  They then specify the needed fluxes on face~4, but again allow any flux on the
last face.  Finally, face~5 has a set of required fluxes, and these can be matched by
divergence-free functions.  As mentioned previously, Cockburn and Fu use a mesh of tetrahedral
elements within the hexahedron to construct their supplemental functions.  We can instead use the
ideas of Sections~\ref{sec:presupplements}--\ref{sec:supplements}.

The number of additional fluxes (see~\cite[Cor.~4.5 and Table~4]{Cockburn_Fu_2017_mDecompIII}) is
bounded by $3(r+1)$ and depends on the geometry, in particular, on the number of parallel faces.
The cube requires $3(r+1)$ supplemental functions.  It is numerically delicate to vary the number of
supplemental functions based on the number of parallel sides, since an element $E$ may have almost,
but not quite, parallel faces.

A numerically safe way to proceed is to use the general construction of
Subsection~\ref{sec:at0general}.  Since it is difficult to characterize what functions lie in
$\Bu_r$ (see, however, \cite{Cockburn_Fu_2017_mDecompIII}), we simply compute the flux matrix of the
entire polynomial part of the space, i.e., of a basis for $\Curl\Po_{r+1}^3\oplus\x\Po_0$, which has
dimension $n=\dim\Po_r^3-\dim(\x\Po_{r-1}^*)=\frac16(r+2)(r+1)(2r+9)+1$.  To proceed, it is convenient
to express the flux matrix as an ordinary matrix of numbers, so we expand every normal flux
polynomial in a basis that includes~1 and everything orthogonal to~1.  A simple choice is displayed
in \eqref{eq:desired_bsigma} for face~1, i.e., take~1 and the functions
$x_{i_1}^\ell x_{j_1}^m-c_{\ell,m}^1$ for $1\le\ell+m\le r$.  The expansion coefficients give the
matrix $M^{\text{full}}$, which is $n\times3(r+2)(r+1)$.

We reduce the number of rows in $M^{\text{full}}$ to $M$ by including only a basis for the row
space.  This removes the interior bubble parts of the space.  It may be better to compute the
singular values of $M^{\text{full}}$ and remove all rows corresponding to small singular values.  In
fact, we suggest reducing $M^{\text{full}}$ to an $n-3(r+1)$ matrix, so that $3(r+1)$ supplements are
needed, regardless of the geometry.  This may create more interior bubble functions than is
necessary, but it safely handles any geometry.

We proceed to find a basis $N^T$ of $(M^T)^\perp$. Let the area vector $\bvarphi$ be analogous to the
one defined in \eqref{eq:areaVector} (it is the same, except that it has more zeros).  The desired
supplemental fluxes $S$ are then defined by the formula in \eqref{eq:desiredS0}, i.e.,
$S = N\Big(I - \dfrac{\bvarphi\bvarphi^T}{\bvarphi^T\bvarphi}\Big)$.  We construct supplemental
functions $\Su_r$ having these fluxes.  By Lemma~\ref{lem:independence_of_projection}, these fluxes
are independent of the ones from $M$, and so the space $\Ed_r$ is well-defined.  Any extra functions
are divergence-free bubbles, which can be modified to have no face fluxes.

In the hybrid form of the mixed method~\cite{Arnold_Brezzi_1985}, the Lagrange multiplier space on
the face~$f$ is simply $\Po_r(f)$, and implementation is clear up to evaluation of the integrals
over the elements.  If the hybrid form is not used, one needs $H(\Div)$-conforming finite element
shape functions to form a local basis.  This is done by inverting the numerical counterpart of the
local flux matrix, as discussed in \eqref{eq:shape0simple} for $r=0$.

\subsection{Construction of the $\pi$ operator}\label{sec:pi_operator}

Once the spaces $\Ed_r$, $\Di_r$ or $\Di_r^{\red}$, and $\Bu_r$ have been determined, one can
define the Raviart-Thomas~\cite{Raviart_Thomas_1977} or Fortin~\cite{Brezzi_Fortin_1991} projection
operator $\pi_r$ onto $\V_r(E)=\Ed_r\oplus\Di_r\oplus\Bu_r$ or $\pi_r^{\red}$ onto
$\V_r^{\red}(E)=\Ed_r\oplus\Di_r^{\red}\oplus\Bu_r$.  One simply matches the DOFs
\eqref{eq:div_DOFs}--\eqref{eq:flux_DOFs} to fix the part in $\Ed\oplus\Di_r$ or
$\Ed\oplus\Di_r^{\red}$.  To these DOFs, we add
\begin{equation}\label{eq:bubble_DOFs}
(\vv,\bpsi)_E\quad\forall\bpsi\in\Bu_r.
\end{equation}

\Add Because of \eqref{eq:div_DOFs}, \endAdd these projection operators satisfy the commuting diagram property, namely, that
\begin{equation}\label{eq:commuting_diagram}
\div\pi_r\vv = \cP_{W_r}\div\vv
\quad\text{and}\quad
\div\pi_r^\red\vv = \cP_{W_{r-1}}\div\vv,
\end{equation}
where $\cP_{W_s}$ is the $L^2$-projection onto $W_s$.  Moreover, since our spaces contain full sets
of polynomials, $\V_r\times W_r$ will have full $H(\Div)$-approximation properties and
$\V_r^{\red}\times W_{r-1}$ will have reduced $H(\Div)$-approximation properties.  Moreover,
we have the following result.

\begin{lemma}\label{lem:convergence}
\Add Assume that the computational mesh is shape-regular. \endAdd
The spaces $\V_r\times W_r$ and $\V_r^\red\times W_{r-1}$ satisfy the inf-sup conditions
\begin{equation}
\inf_{\text{\raisebox{-2.1pt}{$w\in W_r$}}}\sup_{\vv\in\V_r}\frac{(\div\vv,w)_\Omega}{\|\vv\|_{_\V}\|w\|_W}\ge\gamma>0
\quad\text{and}\quad
\inf_{\text{\raisebox{-2.1pt}{$w\in W_{r-1}$}}}\sup_{\vv\in\V_r^\red}\frac{(\div\vv,w)_\Omega}{\|\vv\|_{_\V}\|w\|_W}\ge\gamma>0.
\end{equation}
Moreover, if $\uu$ is sufficiently smooth and $h$ is the diameter of the computational mesh, then
\begin{alignat}2
\|\uu - \pi_r\uu\| + \|\uu - \pi_r^{\red}\uu\| &\le Ch^{s+1}\|\uu\|_{s+1},&&\quad 1\le s\le r,\\
\|\div(\uu - \pi_r\uu)\| &\le Ch^{s+1}\|\div\uu\|_{s+1},&&\quad 1\le s\le r,\\
\|\div(\uu - \pi_r^{\red}\uu)\| &\le Ch^{s+1}\|\div\uu\|_{s+1},&&\quad 1\le s\le r-1. 
\end{alignat}
\end{lemma}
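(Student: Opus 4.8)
The plan is to place the result squarely within the classical Brezzi--Fortin theory of mixed methods, for which three ingredients suffice: the continuous inf-sup condition, the commuting diagram property, and the uniform boundedness (stability) of the projection operators $\pi_r$ and $\pi_r^\red$ over the shape-regular family of meshes. Two of these are already in hand. The continuous inf-sup condition for \eqref{eq:var_darcy}--\eqref{eq:var_conservation} is classical and follows from the surjectivity of $\div\colon\V\to W$ with bounded right inverse, yielding a constant $\gamma_0>0$ independent of the mesh. The commuting diagram property \eqref{eq:commuting_diagram} was established above directly from the divergence DOFs \eqref{eq:div_DOFs}. So the genuine work is to verify that $\pi_r$ and $\pi_r^\red$ are bounded, $\|\pi_r\vv\|_\V\le C\|\vv\|_\V$ with $C$ depending only on the shape-regularity constants, and then to harvest the approximation estimates from polynomial reproduction.

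Granting that boundedness, the inf-sup condition follows from Fortin's argument. Given $w\in W_r$, the continuous inf-sup produces $\vv\in\V$ with $(\div\vv,w)_\Omega\ge\gamma_0\|\vv\|_\V\|w\|_W$; applying $\pi_r$ and using \eqref{eq:commuting_diagram} together with $w\in W_r$ gives $(\div\pi_r\vv,w)_\Omega=(\cP_{W_r}\div\vv,w)_\Omega=(\div\vv,w)_\Omega$, while $\|\pi_r\vv\|_\V\le C\|\vv\|_\V$. Taking the supremum over $\V_r\ni\pi_r\vv$ then yields the discrete inf-sup with $\gamma=\gamma_0/C$. The reduced case is identical, replacing $W_r$ by $W_{r-1}$ and $\pi_r$ by $\pi_r^\red$.

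For the approximation estimates I would argue element by element and sum. Since $\V_r(E)\supset\Po_r^3$ and $\pi_r$ is idempotent on $\V_r$, it reproduces $\Po_r^3$ exactly; a standard Bramble--Hilbert/Deny--Lions argument on a reference configuration, combined with a scaling estimate transporting it to $E$, gives $\|\uu-\pi_r\uu\|_E\le Ch^{s+1}|\uu|_{s+1,E}$ for $1\le s\le r$, and summing over the shape-regular mesh produces the stated bound for $\uu$ (and likewise for $\pi_r^\red$). The divergence estimates are cleaner: by \eqref{eq:commuting_diagram}, $\div(\uu-\pi_r\uu)=(I-\cP_{W_r})\div\uu$, so the error is exactly the $L^2$-projection error of $\div\uu$ onto the piecewise polynomials $W_r=\Po_r$, which is bounded by $Ch^{s+1}\|\div\uu\|_{s+1}$ for $1\le s\le r$; the reduced case uses $W_{r-1}=\Po_{r-1}$, losing one order and giving the range $1\le s\le r-1$.

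The main obstacle is the $\pi_r$-stability estimate, precisely because the space mixes two differently-mapped ingredients: the polynomial part $\Po_r^3$ lives directly on $E$, whereas the supplemental space $\Su_r(E)=\cP_E\hat\Su_r$ is carried from the reference cube by the Piola transform \eqref{eq:supplementsMapped}, and the trilinear map $\F_{\!E}$ has a nonconstant Jacobian $J_E$. A naive scaling to a single reference element is therefore unavailable; instead one must control $\cP_E$ and $\cP_E^{-1}$, together with the face Jacobians $K_i$ entering the flux DOFs \eqref{eq:flux_DOFs}, in terms of the shape-regularity constants, and check that the bubble conditions \eqref{eq:bubble_DOFs} are matched stably. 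The divergence-free construction of $\Su_r$ (so that $\div\bsigma=0$) is exactly what keeps these factors benign and averts the order loss seen for mapped RT elements; establishing the uniform bounds on these mapped quantities under shape regularity is the technical heart of the argument.
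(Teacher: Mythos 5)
Your outline is exactly the classical Fortin-operator argument (commuting diagram plus a uniformly bounded projection gives the discrete inf-sup; polynomial reproduction plus Bramble--Hilbert gives the vector estimate; the commuting diagram reduces the divergence estimates to $L^2$-projection error), which is precisely the route the paper takes: it offers no proof of its own and instead points to Raviart--Thomas, BDDF, Brezzi--Fortin, and the outline in Section~2 of Arbogast--Correa. Like the paper, you defer the one genuinely element-specific step --- the uniform $H(\Div)$-stability of $\pi_r$ and $\pi_r^{\red}$ on shape-regular cuboidal hexahedra, where the Piola-mapped supplements and the nonconstant Jacobians enter --- to the standard machinery rather than carrying it out, so your attempt matches the paper's treatment in both substance and level of detail.
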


\Add
The condition for a computational mesh to be shape regular is that each element $E$ is uniformly shape-regular \cite [pp.~104--105]{Girault_Raviart_1986}, which means that $E$ contains fifteen (overlapping) simplices constructed from any choice of four vertices, and each such simplex has an inscribed ball, the minimal radius of which is $\rho_E$. If $h_E$ denotes the diameter of $E$, the requirement is that the ratio $\rho_E/h_E\geq\sigma_*>0$, where $\sigma_*$ is independent of the meshes as $h\to0$ ($h = \max_E h_E$).

The proof of Lemma~\ref{lem:convergence} is quite standard and classic in the mixed finite element literature~(e.g., see \cite{Raviart_Thomas_1977,BDDF_1987,Brezzi_Fortin_1991}, or see the proof outlined in~\cite[Section~2]{Arbogast_Correa_2016} for the two families of similar elements defined on quadrilaterals). \endAdd


\section{Some numerical results}\label{sec:numerics}

In this section we present convergence studies for various low order mixed spaces.  We include the
new full and reduced spaces defined in Section~\ref{sec:atSpaces}, which we will designate as AT
spaces to avoid confusion.  The AT$_0$ space used is the simple one given in \eqref{eq:Supp0simple}
(or, equivalently, \eqref{eq:simpleShape1}--\eqref{eq:simpleShape4}).  The AT$_1$ full and reduced
spaces used are constructed using \Add
the symmetric supplemental fluxes of \eqref{eq:simpleAT1fluxes-symm} since
the invertibility of $\C\circ\hh$ is known for $\cT_h^2$ and $\cT_h^3$ 
(see Theorem~\ref{thm:2parallelAndPillars}).
\endAdd

The performance of the AT spaces will be compared to RT, BDDF, and ABF spaces.  For the 3-D ABF
space, we use the optimal space $\hat{\cP}^{\text{opt}}_r(\hat{K})$ of Bergot and
Durufle~\cite{Bergot_Durufle_2013}.  The test problem is defined on the unit cube $\Omega = [0,1]^3$
with the coefficient $a=1$ and the source function
$f(\x) = 3\pi^2\cos(\pi x_1)\cos(\pi x_2)\cos(\pi x_3)$.  The exact solution is
\begin{align}
p(x_1,x_2,x_3) &= \cos(\pi x_1) \cos(\pi x_2) \cos(\pi x_3),\\
\uu(x_1,x_2,x_3) &= \pi \threeVec
{\sin(\pi x_1)\cos(\pi x_2)\cos(\pi x_3)}
{\cos(\pi x_1)\sin(\pi x_2)\cos(\pi x_3)}
{\cos(\pi x_1)\cos(\pi x_2)\sin(\pi x_3)}.
\end{align}

In the computations, we apply the hybrid form of the mixed finite element
method~\cite{Arnold_Brezzi_1985}.  Let $\cT_h$ be the finite element partition of the domain
$\Omega$.  For the mixed spaces $\V_h\times W_h$, let $\V_h^*$ agree with $\V_h$ on each element
$E\in\cT_h$, but relax the condition that the normal flux be continuous on the faces of the elements.
The hybrid method is: Find $\uu_h\in\V_h^*$, $p_h\in W_h$, and $\hat p_h\in M_h$ such that
\begin{alignat}3
\label{eq:hybrid1}
&(a^{-1}\uu_h, \vv)_{E} - (p_h, \div \vv)_{E} +(\hat{p}_h, \vv\cdot \nu_i)_{\dd E} &&= 0 &&\quad\forall\vv\in\V_{h}(E), E\in\cT_h,\\
\label{eq:hybrid2}
&\sum_{E\in\cT_h} (\div \uu_h, w)_{E} &&= (f,w) _{\Omega}&&\quad \forall w\in W_{h},\\
\label{eq:hybrid3}
&\sum_{E\in\cT_h} (\uu_h\cdot\nu,\mu)_{\dd E\setminus\dd\Omega} &&=0 &&\quad \forall\mu\in M_h.
\end{alignat}
The Lagrange multiplier or trace finite element space $M_h$ is defined locally by
$M_h|_f=M_h(f)=\V_h\cdot\nu|_{f}$ for each face $f$ of the computational mesh. For the AT spaces,
$M_r(f)=\Po_r(f)$. We require that the $L^2$-projection of the Dirichlet boundary condition be
imposed on~$\hat{p}_h$.

\begin{figure}
\centerline{\parbox{.2\linewidth}{\includegraphics[width=\linewidth]{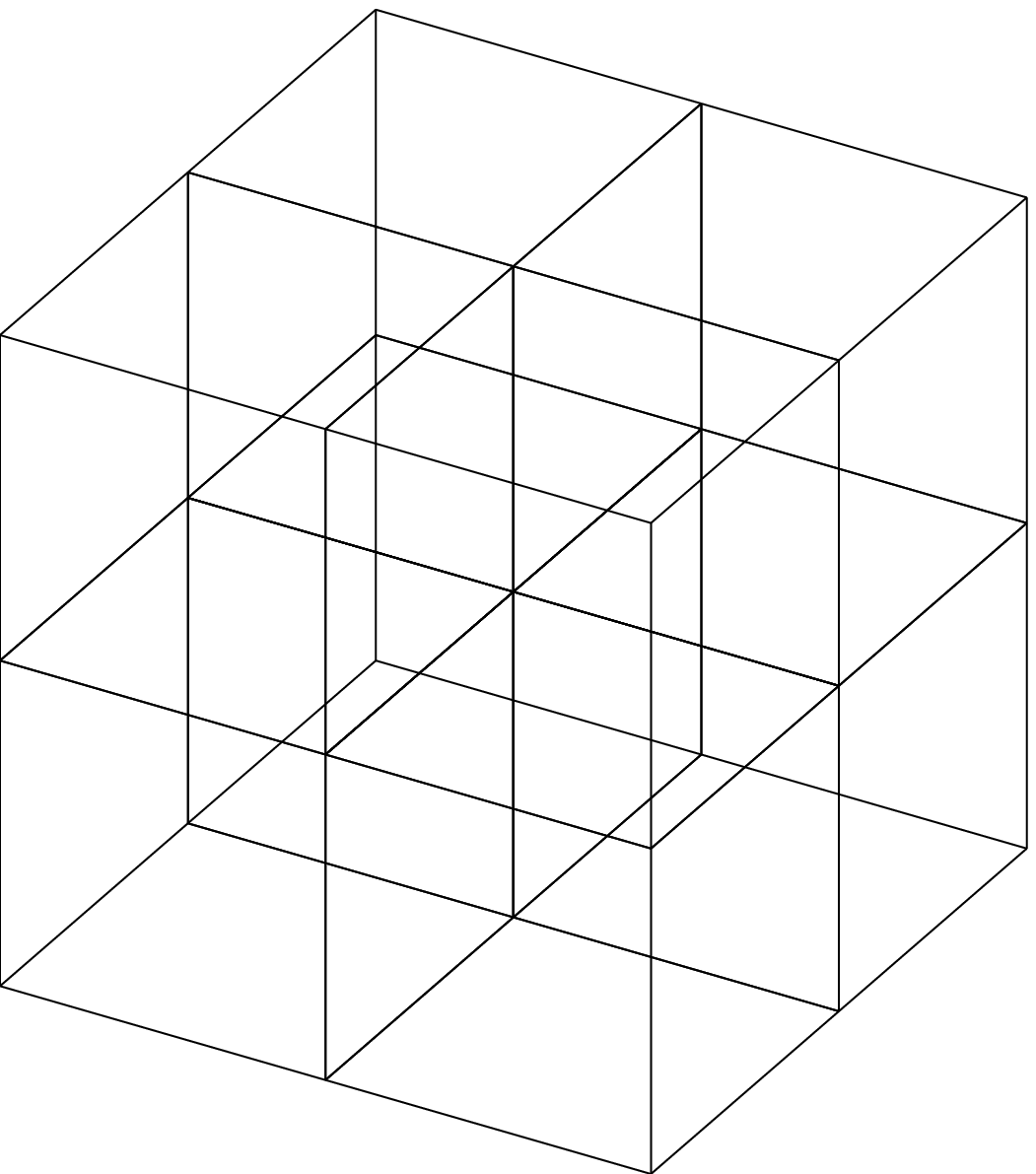}{$\cT_h^1$}}
\qquad\quad\parbox{.2\linewidth}{\includegraphics[width=\linewidth]{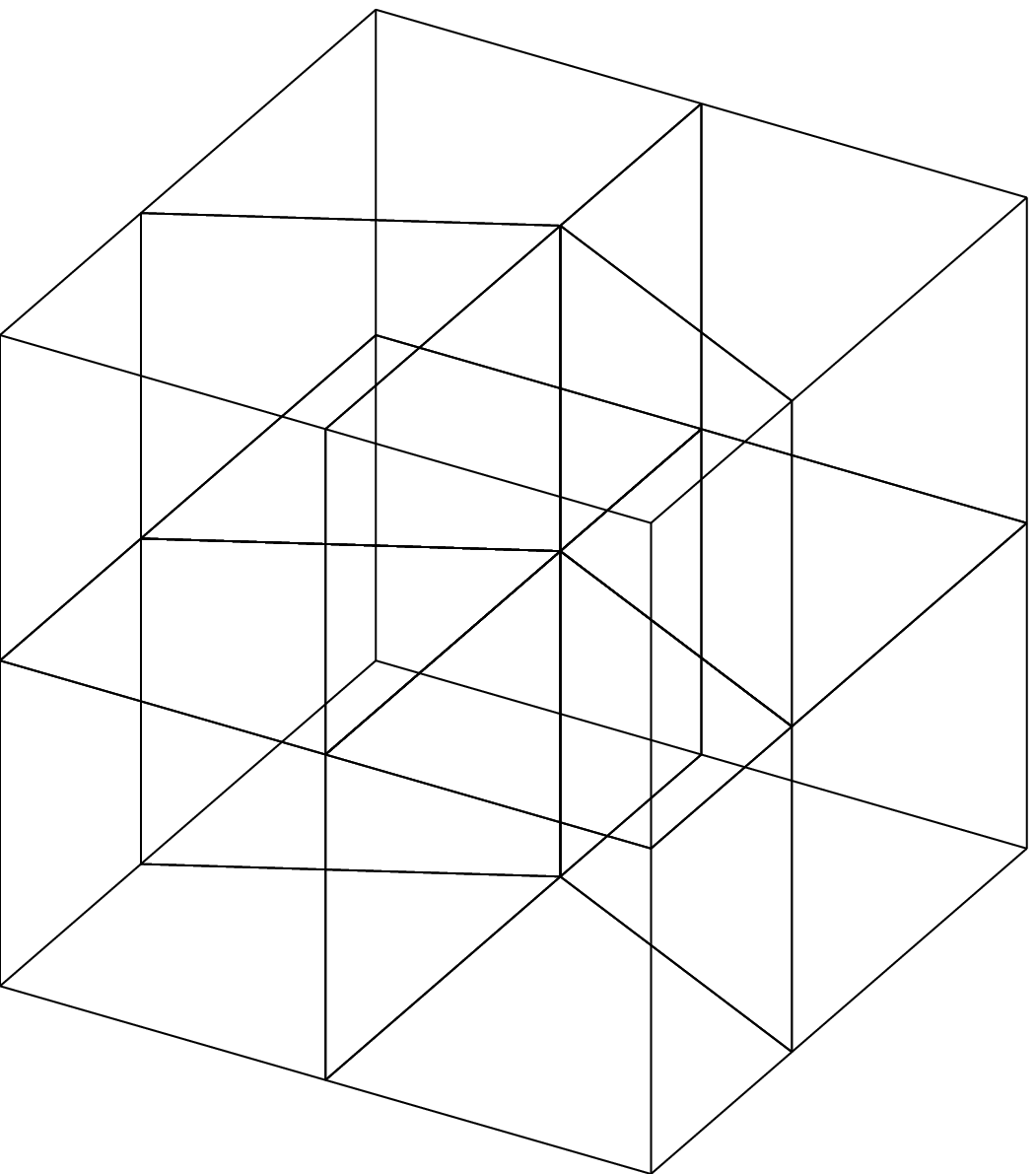}{$\cT_h^2$}}
\qquad\quad\parbox{.2\linewidth}{\includegraphics[width=\linewidth]{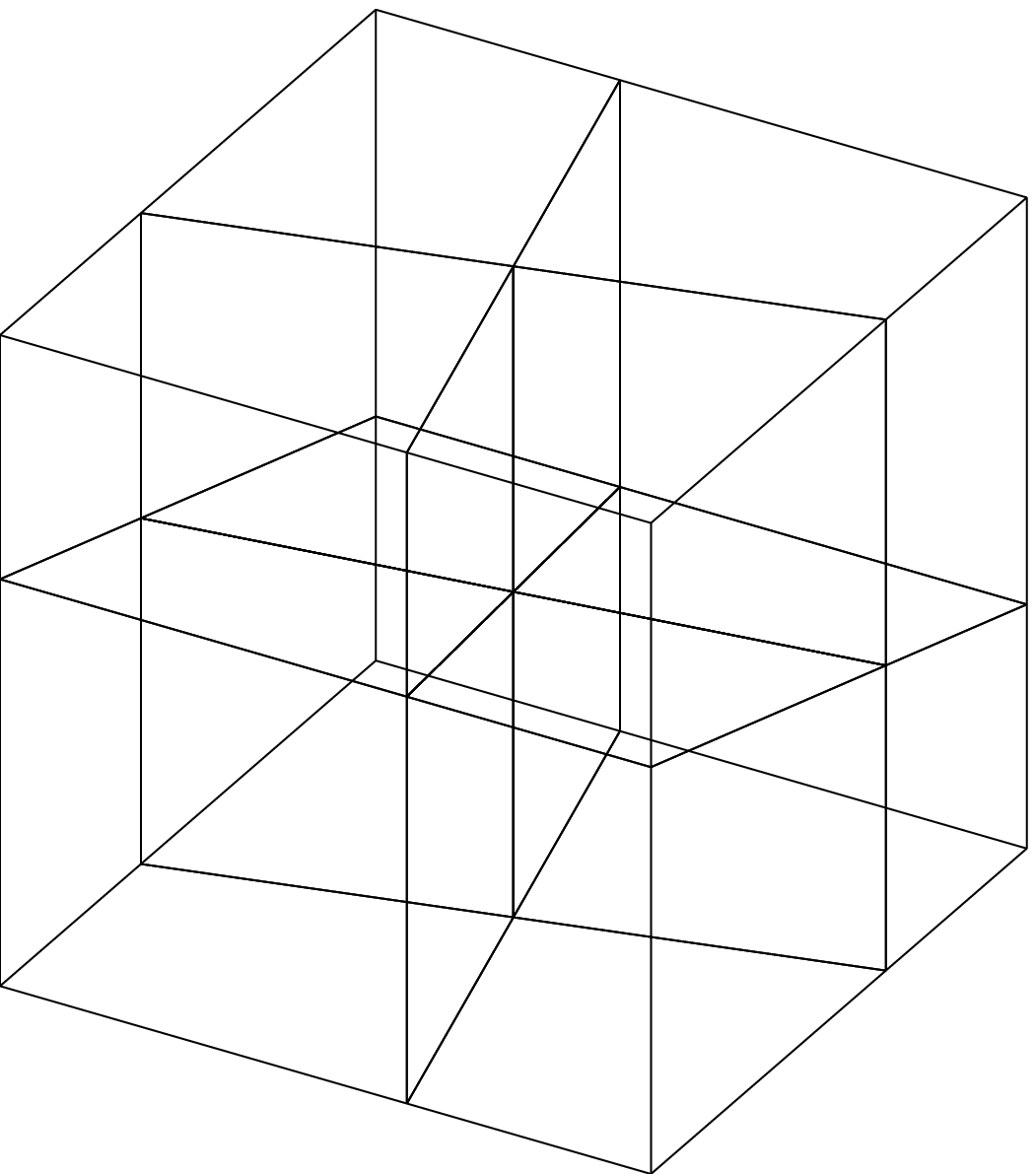}{$\cT_h^3$}}
}
\caption{Mesh of $2\times2\times2$ cubes for the three base meshes. Finer meshes are constructed by
  repeating this base mesh pattern over the domain, appropriately reflected to maintain mesh
  conformity.  Note that the meshes have 3, 2, and 0 pairs of parallel faces per element,
  respectively.}\label{fig:meshes}
\end{figure}

Solutions are computed on three different sequences of meshes.  The first sequence, $\cT_h^1$, is a
uniform mesh of~$n^3$ cubes (three sets of parallel faces per element).  The second sequence,
$\cT_h^2$, is obtained from the 2-D trapezoidal meshes used in Arnold, Boffi, and
Falk~\cite{ABF_2005} by simply lifting them in the third direction.  These elements have two pair of
parallel faces per element. The third sequence of meshes, $\cT_h^3$, is chosen so as to have no pair
of faces being parallel.  The first $2\times2\times2$ mesh for each sequence is shown in
Fig.~\ref{fig:meshes}.  Finer meshes are constructed by repeating this sub-mesh pattern over the
domain, appropriately reflected to maintain mesh conformity.

The cubical mesh $\cT_h^1$ provides a reference on which all the mixed methods work well.  It turns
out that the second and third meshes provide similar results, so we show only results for the most
irregular case of the third mesh~$\cT_h^3$.


\subsection{Full $H(\Div)$-approximation spaces}

\begin{table}
  \caption{A comparison of the dimensions of the local RT, ABF, and AT spaces
    on a hexahedron~$E$. Only the ABF and AT spaces give optimal
    order convergence on hexahedra.\label{tab:comparison_full}}
\centerline{\begin{tabular}{@{\extracolsep{0pt}}c@{\hskip2pt}|ccc}
&RT$_r$&ABF$_r$&AT$_r$\\[2pt]
\hline\\[-10pt]
dim$\V(E)$&$3(r+2)(r+1)^2$&$3(r+4)(r+2)^2$&$\frac{1}{2}(r+1)(r+2)(r+4)$\\
&&&$+\,3(r+1)$ ($+\,2$ if $r=0$)\\[2pt]
dim$W(E)$&$(r+1)^3$&$(r+2)^3+3(r+2)^2$&$\frac16(r+1)(r+2)(r+3)$\\[4pt]
\hline\\[-10pt]
$r=0$&\06 + 1&\048 + 20&\06 + 1\\
$r=1$&36 + 8&135 + 54 & 21 + 4
\end{tabular}}
\end{table}

The local number of DOFs for each full $H(\Div)$-approximation finite element space can be found in
Table~\ref{tab:comparison_full}.  Note that according to Bergot and Durufle
\cite{Bergot_Durufle_2013}, the optimal ABF$_0$ space should satisfy the property
$\cP_E(\hat{\V}^0_{\ABF}(E))\supset\Po_0^3\oplus\x\Po_0$, and so it is defined to be
$\hat{\V}^0_{\ABF}(E)=\Po_{3,1,1}\times\Po_{1,3,1}\times\Po_{1,1,3}$.  Since we solve the linear
system \eqref{eq:hybrid1}--\eqref{eq:hybrid3} using a Schur complement for $\hat{p}_h$, we will
report in this section the size of the Schur complement matrix, i.e.,~$\dim M_r$, rather than the
size of $\dim(\V_r\times W_r)$.

\begin{table}[ht]
\caption{Errors and orders of convergence for low order RT, AT, and ABF spaces on cubical meshes.}\label{tab:fullT1}
\vspace*{-4pt}
\begin{center}
\begin{tabular}{cc|c|cc|cc|cc}
\hline
& & $M_r$ & \multicolumn{2}{c|}{$\|p-p_h\|$} &
\multicolumn{2}{c|}{\quad$\|\uu-\uu_h\|$\quad } &
\multicolumn{2}{c}{$\|\nabla \cdot (\uu - \uu_h)\|$} \\
$n$ & $n^3$ & DOFs & error & order & error & order& error & order \\
\hline
\multicolumn{9}{c}{$\myStrut$RT$_0=\,\,$AT$_0$ on $\cT_h^1$ meshes}\\
\hline
\02&\0\0\0\08 & \0\0\036 & 2.417e-1 &         & 1.136e-0 &         & 7.156e-0 &         \\
\06&\0\0216  & \0\0756  & 9.110e-2 & 0.95  & 4.078e-1 & 0.97 & 2.697e-0 & 0.95 \\
12&\01728  & \05616      & 4.609e-2 & 0.99  & 2.052e-1 & 0.99 & 1.365e-0 & 0.99 \\
24&13824  & 43200          & 2.312e-2 & 1.00  & 1.027e-1 & 1.00 & 6.844e-1 & 1.00 \\
\hline
\multicolumn{9}{c}{$\myStrut$ABF$_0$ on $\cT_h^1$ meshes}\\
\hline
\02&\0\0\0\08  &   \0\0\0144 & 1.035e-2 &          & 2.523e-1  &         &  2.578e-1 &        \\ 
\06&\0\0216     & \0\03024    & 2.961e-4 & 3.17  & 2.786e-2 & 2.01 & 8.389e-3 & 3.06 \\
12&\01728      &  \022464      & 3.523e-5 & 3.05  &  6.953e-3 & 2.00 & 1.031e-3 & 3.02 \\ 
24&13824        & 172800        & 4.345e-6 & 3.01 & 1.737e-3 & 2.00 & 1.283e-4 & 3.00\\ 
\hline
\multicolumn{9}{c}{$\myStrut$RT$_1$ on $\cT_h^1$ meshes}\\
\hline
\02& \0\0\0\08  & \0\0\0144  & 5.419e-2  &         & 2.440e-1 &         & 1.603e-0 &      \\ 
\06& \0\0216     &  \0\03024    & 6.231e-3 & 1.99 & 2.773e-2 & 1.99 & 1.845e-1 & 1.99 \\
12& \01728       &  \022464     & 1.562e-3 & 2.00 & 6.945e-3 & 2.00 & 4.626e-2 & 2.00 \\
24&13824         & 172800        & 3.909e-4 & 2.00 & 1.737e-3 & 2.00 & 1.157e-2 & 2.00 \\
\hline
\multicolumn{9}{c}{$\myStrut$AT$_1$ on $\cT_h^1$ meshes}\\
\hline
\02&\0\0\0\08  &  \0\0\0108  & 1.171e-1  &         & 4.358e-1  &         & 3.465e-0    &    \\
\06&\0\0216     & \0\02268     & 1.505e-2  & 1.94 & 5.164e-2 & 1.98 & 4.455e-1 & 1.94 \\
12& \01728     & \016848       & 3.814e-3  & 1.99 & 1.298e-2 & 1.99 & 1.129e-1 & 1.99 \\
24&13824        & 129600       & 9.567e-4 & 2.00 & 3.249e-3 & 2.00 & 2.833e-2 & 2.00 \\
\hline
\end{tabular}
\end{center}
\end{table}

In Tables~\ref{tab:fullT1}--\ref{tab:fullT3}, we present the errors and the orders of the
convergence for the lowest two indices of the full $H(\Div)$-approximation spaces RT, AT, and ABF;
although, we omit ABF$_1$ because the sheer size of its linear system is computationally
excessive. On cubical meshes $\cT_h^1$, RT$_0$ and AT$_0$ coincide. Table~\ref{tab:fullT1} shows
first order approximation of the scalar $p$, the vector $\uu$, and the divergence $\div\uu$, as we
should expect. The ABF$_0$ space gives higher order approximation of all three variables on cubes
because it is constructed with higher order polynomials and, in fact, includes RT$_1$.  The results
for RT$_1$ and AT$_1$ (which are different spaces even on cubical meshes) show second order
convergence for all the variables.  The errors for RT$_1$ are smaller than AT$_1$, but RT$_1$ uses
more degrees of freedom, both locally and globally.

Table~\ref{tab:fullT3} shows that for the hexahedral mesh sequence $\cT_h^3$, RT$_0$ retains first
order convergence of the scalar but loses convergence of the vector and divergence, while AT$_0$
shows first order convergence for all three quantities. The ABF$_0$ space still gives a higher order
convergence rate for the scalar on the meshes tested.  However, we can observe that the vector and
divergence approximations quickly decrease to first order.  We also observe that AT$_1$ gives the
optimal second order approximation of all quantities, whereas RT$_1$ only retains second order for
the scalar.  The vector reduces to first order in this numerical test, but the results on the
definition of ABF$_0$ \cite{Bergot_Durufle_2013} show that this first order convergence cannot be
ensured on general meshes.  The divergence appears to be converging at less than first order.

\begin{table}[ht]
\caption{Errors and orders of convergence for low order RT, AT, and ABF spaces on $\cT_h^3$ meshes.}\label{tab:fullT3}
\vspace*{-4pt}
\begin{center}
\begin{tabular}{cc|c|cc|cc|cc}
\hline
& & $M_r$ & \multicolumn{2}{c|}{$\|p-p_h\|$} &
\multicolumn{2}{c|}{\quad$\|\uu-\uu_h\|$\quad } &
\multicolumn{2}{c}{$\|\nabla \cdot (\uu - \uu_h)\|$} \\
$n$ & $n^3$ & DOFs & error & order & error & order& error & order \\
\hline
\multicolumn{9}{c}{$\myStrut$RT$_0$ on $\cT_h^3$ meshes}\\
\hline
\02 & \0\0\0\08  &  \0\0\036 & 2.660e-1 &  & 1.185e-0 & & 7.488e-0  &\\
\06 & \0\0216 &  \0\0756 & 9.464e-2 & 0.94 & 4.591e-1 & 0.86 & 3.149e-0 & 0.76 \\
12 & \01728 & \05616 & 4.782e-2 & 0.99 & 2.630e-1 & 0.75 & 1.952e-0 & 0.60 \\
24 & 13824 & 43200 & 2.400e-2 & 1.00 & 1.838e-1 & 0.45 & 1.530e-0 & 0.29 \\
\hline
\multicolumn{9}{c}{$\myStrut$AT$_0$ on $\cT_h^3$ meshes}\\
\hline
\02 & \0\0\0\08 &   \0\0\036 & 2.661e-1   & & 1.226e-0   & & 7.873e-0    &\\
\06 & \0\0216 &   \0\0756 & 9.452e-2 & 0.94 & 4.275e-1 & 0.96 & 2.798e-0 & 0.94 \\
12 & \01728 &  \05616 & 4.771e-2 & 0.99 & 2.150e-1 & 0.99 & 1.413e-0 & 0.99 \\
24 & 13824 & 43200 & 2.394e-2 & 1.00 & 1.077e-1 & 1.00 & 7.087e-1 & 1.00 \\
\hline
\multicolumn{9}{c}{$\myStrut$ABF$_0$ on $\cT_h^3$ meshes}\\
\hline
\02 & \0\0\0\08 &   \0\0\0144 & 1.474e-2   & & 2.815e-1   & & 3.649e-1    &\\
\06 & \0\0216 &  \0\03024 & 4.706e-4 & 3.04 & 3.697e-2 & 1.85 & 2.222e-2 & 2.33 \\
12 & \01728 &  \022464 & 6.438e-5 & 2.85 & 1.310e-2 & 1.47 & 5.909e-3 & 1.77 \\
24 & 13824 & 172800 & 9.937e-6 & 2.65 & 5.537e-3 & 1.19 & 2.261e-3 & 1.30 \\
\hline
\multicolumn{9}{c}{$\myStrut$RT$_1$ on $\cT_h^3$ meshes}\\
\hline
\02 & \0\0\0\08 &   \0\0\0144 & 5.644e-2   & & 2.754e-1   & & 1.996e-0    &\\
\06 & \0\0216 &   \0\03024 & 7.098e-3 & 2.03 & 3.688e-2 & 1.83 & 2.834e-1 & 1.69 \\
12 & \01728 &  \022464 & 1.814e-3 & 2.00 & 1.311e-2 & 1.47 & 1.239e-1 & 1.15 \\
24 & 13824 & 172800 & 4.541e-4 & 2.00 & 5.547e-3 & 1.19 & 7.382e-2 & 0.64 \\
\hline
\multicolumn{9}{c}{$\myStrut$AT$_1$ on $\cT_h^3$ meshes}\\
\hline
\02 & \0\0\0\08 &   \0\0\0108 & 1.299e-1   & & 4.526e-1   & & 3.846e-0    &\\
\06 & \0\0216 &   \0\02268 & 1.600e-2 & 1.95 & 5.629e-2 & 2.00 & 4.737e-1 & 1.95 \\
12 & \01728 &  \016848 & 4.091e-3 & 1.98 & 1.436e-2 & 1.99 & 1.211e-1 & 1.98 \\
24 & 13824 & 129600 & 1.027e-3 & 2.00 & 3.600e-3 & 2.00 & 3.040e-2 & 2.00 \\
\hline
\end{tabular}
\end{center}
\vspace*{12pt}
\end{table}


\subsection{Reduced $H(\Div)$-approximation spaces}

\begin{table}
\caption{
The dimensions of the local BDDF and AT\/$^\red$ spaces on a hexahedron $E$. These spaces coincide
on rectangles, and they have the same local dimension. Only the AT\/$^\red$ spaces give optimal
order convergence on hexahedra.\label{tab:comparison_reduced}}
\centerline{\begin{tabular}{@{\extracolsep{0pt}}c@{\hskip2pt}|cccc}
&BDDF,\ AT$_r^\red$\\[2pt]
\hline&&&\\[-10pt]
dim$\V(E)$&$\frac{1}{2}(r+1)(r+2)(r+3)+3(r+1)$\\[2pt]
dim$W(E)$&$\frac16r(r+1)(r+2)$\\[4pt]
\hline&&&\\[-10pt]
$r=1$&18 + 1
\end{tabular}}
\vspace*{24pt}
\end{table}

\begin{table}[ht]
\caption{Errors and orders of convergence for BDDF$_1$ and AT$^{\red}_1$.}\label{tab:reducedT1}
\vspace*{-4pt}
\begin{center}
\begin{tabular}{cc|c|cc|cc|cc}
\hline
& & $M_r$ & \multicolumn{2}{c|}{$\|p-p_h\|$} &
\multicolumn{2}{c|}{\quad$\|\uu-\uu_h\|$\quad } &
\multicolumn{2}{c}{$\|\nabla \cdot (\uu - \uu_h)\|$} \\
$n$ & $n^3$ & DOF & error & order & error & order& error & order \\
\hline
\multicolumn{9}{c}{$\myStrut$BDDF$_1=\,\,$AT$^{\red}_1$ on $\cT_h^1$ meshes}\\
\hline
\02& \0\0\0\08  & \0\0\0108 & 2.417e-1 &         & 5.611e-1  &         & 7.156e-0   & \\
\06&  \0\0216     & \0\02268 & 9.114e-2 & 0.95 & 8.601e-2 & 1.85 & 2.697e-0 & 0.95 \\
12& \01728        & \016848 & 4.610e-2 & 0.99 & 2.249e-2 & 1.95 & 1.365e-0 & 0.99 \\
24&13824         &  129600   & 2.312e-2 & 1.00 & 5.701e-3 &  1.98 & 6.844e-1 & 1.00 \\
\hline
\end{tabular}
\end{center}
\end{table}

Next we consider the reduced H($\Div$)-approximation spaces BDDF$_1$ and AT$^{\red}_1$, which
coincide on cubical meshes.  These spaces have the same local and global dimension, as shown in
Table~\ref{tab:comparison_reduced}.  The computational results appear in
Tables~\ref{tab:reducedT1}--\ref{tab:reducedT3}. As we expect, the elements give first order
approximation for the scalar $p$ and the divergence $\div\uu$ and second order convergence for the
vector $\uu$ on cubical meshes, as shown in Table~\ref{tab:reducedT1}.  On the hexahedral meshes
$\cT_h^3$, Table~\ref{tab:reducedT3} shows that BDDF$_1$ has first order approximation of the scalar
but loses convergence of the vector and the divergence. When AT$^{\red}_1$ is used instead, the
optimal convergence rates of the cubical meshes are recovered for the hexahedral meshes, i.e.,
second order approximation for the vector~$\uu$ and first order for the scalar~$p$ and the
divergence~$\div\uu$.

\begin{table}[ht]
\caption{Errors and orders of convergence for BDDF$_1$ and AT$^{\red}_1$.}\label{tab:reducedT3}
\vspace*{-4pt}
\begin{center}
\begin{tabular}{cc|c|cc|cc|cc}
\hline
& & $M_r$ & \multicolumn{2}{c|}{$\|p-p_h\|$} &
\multicolumn{2}{c|}{\quad$\|\uu-\uu_h\|$\quad } &
\multicolumn{2}{c}{$\|\nabla \cdot (\uu - \uu_h)\|$} \\
$n$ & $n^3$ & DOF & error & order & error & order& error & order \\
\hline
\multicolumn{9}{c}{$\myStrut$BDDF$_1$ on $\cT_h^3$ meshes}\\
\hline
\02 & \0\0\0\08 &    108 & 2.665e-1 &  & 6.450e-1 & & 7.487e-0  &\\
\06 & \0\0216 &   2268 & 9.481e-2 & 0.94 & 1.164e-1 & 1.52 & 3.149e-0 & 0.76 \\
12 & \01728 &  16848 & 4.786e-2 & 0.99 & 4.000e-2 & 1.43 & 1.952e-0 & 0.60 \\
24 & 13824 & 129600 & 2.401e-2 & 1.00 & 1.723e-2 & 1.16 & 1.530e-0 & 0.29 \\
\hline
\multicolumn{9}{c}{$\myStrut$AT$^{\red}_1$ on $\cT_h^3$ meshes}\\
\hline
\02 & \0\0\0\08 &    108 & 2.660e-1 &  & 6.435e-1 & & 7.876e-0 & \\
\06 & \0\0216 &   2268 & 9.455e-2 & 0.94 & 9.760e-2 & 1.76 & 2.798e-0 & 0.94 \\
12 & \01728 &  16848 & 4.772e-2 & 0.99 & 2.610e-2 & 1.91 & 1.413e-0 & 0.99 \\
24 & 13824 & 129600 & 2.394e-2 & 1.00 & 6.753e-3 & 1.96 & 7.087e-1 & 1.00 \\
\hline
\end{tabular}
\end{center}
\end{table}

\Add

\section{Conclusions}\label{sec:conc}

We generalized the two dimensional mixed finite elements of Arbogast and
Correa~\cite{Arbogast_Correa_2016} defined on quadrilaterals to three dimensional cuboidal
hexahedra.  Our construction is similar in that vector polynomials are used directly on the element.
The space of polynomials used is rich enough to give good approximation properties over the element
for both the vector variable and its divergence (as either full or reduced $H(\Div)$-approximation).
Unfortunately, the traces of the normal components of these vector polynomials onto the faces do not
independently span the full space of polynomials.  This property is needed for $H(\Div)$-conformity.
Therefore, supplemental functions are added to the space to give the full set of edge degrees of
freedom (i.e., normal fluxes).  These supplemental functions are defined on a reference element and
mapped to the hexahedron using the Piola transform.

We provided a systematic procedure for defining supplemental functions that are divergence-free and
have any prescribed polynomial normal flux in
Sections~\ref{sec:presupplements}--\ref{sec:supplements}.  This is the key contribution of this work.

We also discussed in Section~\ref{sec:atSpaces} what normal fluxes are required of the supplemental
functions to define mixed finite element spaces.  These supplemental functions are then defined
using functions from Section~\ref{sec:supplements}. When index $r=0$ (the lowest order case), we
gave two possibilities.  The simple case has shape functions defined by the explicit formulas
\eqref{eq:simpleShape1}--\eqref{eq:simpleShape4}.  The more general case for $r=0$ in
Section~\ref{sec:at0general} requires a bit of local linear algebra,
\eqref{eq:fluxMatrix0_polynomial}--\eqref{eq:desiredS0}, to determine the fluxes required of the
supplemental functions \eqref{eq:at0general1}--\eqref{eq:at0general2}.  For $r=1$, we gave three
possibilities: (1) for elements that satisfy the invertibility 
condition \eqref{eq:cnu}, such as elements with two parallel faces or that are truncated
pillars; (2) for elements with a prescribed normal flux (up to two parameters, which must be set appropriately); and (3) for
the general case of Section~\ref{sec:at}, which applies to all $r\ge1$.  The general case requires
some local linear algebra to determine the fluxes required of the supplemental functions.

Numerical results in Section~\ref{sec:numerics} verified that our approach produces mixed finite
elements that achieve optimal full or reduced $H(\Div)$-approximation on quadrilateral meshes.

\endAdd
\Add
\appendix\normalsize

\section{On the invertibility of matrix $\C\circ\hh$}\label{sec:appa}
In Section~\ref{sec:at1} Theorem~\ref{thm:simpleAT1fluxes-symm}, we stated that the independence of
the degrees of freedom of our new spaces when $r=1$ with symmetric supplements reduces to the
invertibility of the matrix $\C\circ\hh$ (\ref{eq:cnu}), which is the Hadamard product of the
centroid matrix $\C$ (see (\ref{eq:cmatrix})) and the normal matrix $\hh$ (see (\ref{eq:nu})) for
faces $f_1$, $f_3$, and $f_5$.  In this section, we discuss the properties of these matrices and how
they relate to the geometry of the convex hexahedron $\tilde E$. We then prove the invertibility of
$\C\circ\hh$ in two special cases.

\subsection{The face normal matrix $\hh$}
Following the discussion in Section~\ref{sec:at1}, we know that
any convex cuboidal hexahedron can be affinely mapped to a simpler shape $\tilde E$,
for which $\nu_0 = -\e_1$, $\nu_2 = -\e_2$, $\nu_4 = -\e_3$ and
$\x_{124} = \e_1$, $\x_{034} = \e_2$, $\x_{025} = \e_3$.
Therefore, the normal fluxes $\nu_1$, $\nu_3$, $\nu_5$ fully define the geometry of $\tilde E$.
We define the face normal matrix
\begin{equation}\label{eq:nu}
\hh = 
\left[\begin{matrix}
\nu_{1,1} & \nu_{3,1} & \nu_{5,1}\\
\nu_{1,2} & \nu_{3,2} & \nu_{5,2}\\
\nu_{1,3} & \nu_{3,3} & \nu_{5,3}
\end{matrix}\right].
\end{equation}

The cross product of the normals of two 
intersecting faces is parallel to the edge of intersection.
Let $\tau_{ij} = \nu_i\times\nu_j$, where $\|\nu_i\times\nu_j\|>0$ for two intersecting faces.
For example (see Figure~\ref{fig:append}), $\tau_{31}=-\tau_{13}$ points from $\x_{135}$ to $\x_{134}$. 

\begin{figure}\centerline{{\setlength\unitlength{4.5pt}\begin{picture}(39.5,34)(-5.8,-1)
%
\put(10,19){\circle*{1}}\put(11,19){$\x_{024}$}
\put(1,10){\circle*{1}}\put(2,10){$\x_{124}$}
\put(28,13){\circle*{1}}\put(29,12.5){$\x_{034}$}
\put(15,0){\circle*{1}}\put(16,-0.5){$\x_{134}$}
\put(10,31){\circle*{1}}\put(11,31){$\x_{025}$}
\put(0,20){\circle*{1}}\put(1,20){$\x_{125}$}
\put(27,27){\circle*{1}}\put(28,26.5){$\x_{035}$}
\put(14.5,13){\circle*{1}}\put(15.5,12.5){$\x_{135}$}
\put(7.5,11){\circle*{1}}\put(8.25,10.75){$\cc^{1}$}
\put(21.1,13.3){\circle*{1}}\put(22,13){$\cc^{3}$}
\put(12.9,22.8){\circle*{1}}\put(13,21){$\cc^{5}$}
\thinlines
\put(10,19){\line(0,1){12}}
\put(10,19){\line(-1,-1){9}}
\put(10,19){\line(3,-1){18}}
\linethickness{1.25pt}
\Line{0}{20}{1}{10}
\Line{0}{20}{10}{31}
\Line{15}{0}{1}{10}
\Line{15}{0}{28}{13}
\Line{27}{27}{28}{13}
\Line{27}{27}{10}{31}
\thinlines
\Line{27}{27}{14.5}{13}\put(20.75,20.1){\vector(1,1){1}}\put(20.5,18.5){$\tau_{53}$}
\Line{0}{20}{14.5}{13}\put(7.25,16.5){\vector(-2,1){1}}\put(7,14.5){$\tau_{15}$}
\Line{15}{0}{14.5}{13}\put(14.75,6.5){\vector(0,-1){1}}\put(12.25,6.5){$\tau_{31}$}
\Line{7.5}{11}{3.3}{6.8}\put(3.8,7.3){\vector(-1,-1){1}}\put(1.5,5){$\nu_1$}
\Line{21.1}{13.3}{25.4}{11.9}\put(25.1,12){\vector(3,-1){1}}\put(26,10){$\nu_3$}
\Line{12.9}{22.8}{12.9}{27}\put(12.9,27){\vector(0,1){1}}\put(10.4,27){$\nu_5$}
\put(9,7){\rotatebox{-32}{\makebox(0,0){\bf face~1}}}
\put(22,10){\rotatebox{38}{\makebox(0,0){\sl\bf face~3}}}
\put(16,27){\rotatebox{-12}{\makebox(0,0){\sl\bf face~5}}}
\thicklines
\qbezier[14](27,32)(24,32)(24,28)\qbezier[14](24,28)(24,24)(20,24)\put(20,24){\vector(-1,0){1}}\put(31,32){\makebox(0,0){face~0}}
\qbezier[24](26,4)(17,4)(17,6)\put(17,6){\vector(0,1){1}}\put(30,4){\makebox(0,0){face~4}}
\qbezier[14](1,28)(3,28)(4,25)\qbezier[14](4,25)(4.5,22)(6,22)\put(6,22){\vector(1,0){1}}\put(-2.75,28){\makebox(0,0){face~2}}
\put(8,2){\makebox(0,0){$\tilde E$}}
\end{picture}}}
\caption{The geometry of $\tilde E$.}
\label{fig:append}
\end{figure}
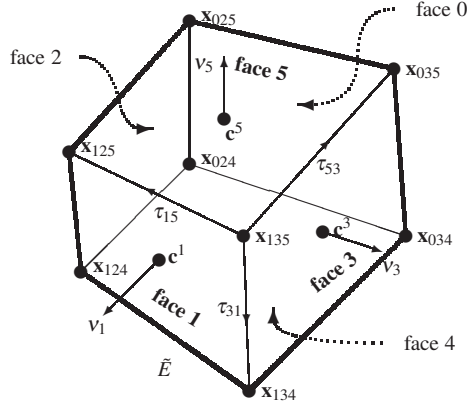

\begin{theorem}\label{thm:minors}
For any convex hexahedron $\tilde E$, all principle minors of $\hh$ are strictly positive.
\end{theorem}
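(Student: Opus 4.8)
The plan is to reduce the seven principal minors of $\hh$ to elementary geometric quantities and then fix each sign from the convexity of $\tilde E$. Writing the columns of $\hh$ as $\nu_1,\nu_3,\nu_5$, a direct expansion shows that the three $1\times1$ principal minors are the diagonal entries $\nu_{1,1}$, $\nu_{3,2}$, $\nu_{5,3}$; that the three $2\times2$ principal minors, on the index sets $\{2,3\}$, $\{1,3\}$, $\{1,2\}$, equal precisely $(\nu_3\times\nu_5)\cdot\e_1=\tau_{35}\cdot\e_1$, $(\nu_5\times\nu_1)\cdot\e_2=\tau_{51}\cdot\e_2$, and $(\nu_1\times\nu_3)\cdot\e_3=\tau_{13}\cdot\e_3$; and that the single $3\times3$ minor is $\det\hh=(\nu_1\times\nu_3)\cdot\nu_5$. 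Thus everything reduces to the signs of individual normal components, of the edge vectors $\tau_{ij}=\nu_i\times\nu_j$, and of the triple product of $\nu_1,\nu_3,\nu_5$.

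For the $1\times1$ minors I would use convexity directly. Since $\x_{124}=\e_1$ lies on face~1 and $\x_{024}=0$ is a vertex of $\tilde E$ not on that face, strict convexity gives $(\x_{024}-\x_{124})\cdot\nu_1<0$, i.e. $\nu_{1,1}=\nu_1\cdot\e_1=-(\x_{024}-\x_{124})\cdot\nu_1>0$. The identical argument with $\x_{034}=\e_2$ and $\x_{025}=\e_3$ yields $\nu_{3,2}>0$ and $\nu_{5,3}>0$.

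The key step is the $3\times3$ minor, and its sign is the real obstacle. The faces $f_1,f_3,f_5$ meet at the single trivalent corner $\x_{135}$, so their outward normals are linearly independent and $\det\hh\neq0$. I would pin down its sign by noting that $\det\hh$ is continuous and nonvanishing on the connected family of normalized convex cuboidal hexahedra and equals $\det[\e_1,\e_2,\e_3]=1>0$ for the reference cube, hence $\det\hh>0$ throughout. Equivalently, writing $a,b,c$ for the three edges emanating into $\tilde E$ from $\x_{135}$ and expressing each $\nu_i$ as a scalar multiple of the cross product of the two of $a,b,c$ lying in $f_i$ (the scalar signs being forced by $\nu_i\cdot(\text{interior edge})<0$), the identity $\det[a\times b,\,b\times c,\,c\times a]=(\det[a,b,c])^2$ reduces $\det\hh$ to $(\det[a,b,c])^2$ times that product of signs, leaving $\det\hh$ with the sign of the corner orientation $\det[a,b,c]$, again positive by comparison with the cube.

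Finally, the $2\times2$ minors follow once $\det\hh>0$ is known. Consider $\tau_{35}\cdot\e_1$: the vector $\tau_{35}=\nu_3\times\nu_5$ is parallel to the edge $f_3\cap f_5=\overline{\x_{035}\x_{135}}$, and dotting with $\nu_1$ gives $\tau_{35}\cdot\nu_1=\det[\nu_3,\nu_5,\nu_1]=\det\hh>0$, while convexity gives $(\x_{135}-\x_{035})\cdot\nu_1>0$ since $\x_{035}$ lies strictly on the interior side of the plane of face~1. Both vectors are parallel to the edge and have positive $\nu_1$-component, so $\tau_{35}$ is a positive multiple of $\x_{135}-\x_{035}$, whence $\tau_{35}\cdot\e_1$ has the sign of $x_1(\x_{135})-x_1(\x_{035})=x_1(\x_{135})>0$ (face~0 is the plane $\{x_1=0\}$ and $\x_{135}$ lies strictly on its interior side). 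The other two $2\times2$ minors are handled identically after cyclically permuting the roles of the axes. In summary, the whole difficulty is concentrated in establishing the positivity of $\det\hh$, i.e. the global orientation of the corner $\x_{135}$; the $1\times1$ and $2\times2$ minors then collapse to one-line convexity and coordinate-monotonicity facts.
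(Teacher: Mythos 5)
Your reduction of the seven principal minors to triple products is correct, and your treatments of the $1\times1$ and $2\times2$ minors are both valid and genuinely different from the paper's. The paper derives \emph{every} minor from the single identity $(\mathbf a\times\mathbf b)\times(\mathbf a\times\mathbf c)=\det[\mathbf a\ \mathbf b\ \mathbf c]\,\mathbf a$ applied to pairs of edge tangents $\tau_{ij}$ of one convex quadrilateral face, so that each minor appears as a quantity of the form $(\tau_{5i}\times\tau_{j5})\cdot\nu_5$, positive by planar convexity of that face. Your supporting-plane argument for the diagonal entries (e.g.\ $(\x_{024}-\x_{124})\cdot\nu_1<0$, hence $\nu_{1,1}>0$) is if anything cleaner, and your derivation of the $2\times2$ minors from $\det\hh>0$ plus the coordinate monotonicity of the edges $f_3\cap f_5$, $f_1\cap f_5$, $f_1\cap f_3$ is sound \emph{once} $\det\hh>0$ is known.

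The gap is in the step you yourself call the real obstacle: the \emph{sign} of $\det\hh$. Linear independence of $\nu_1,\nu_3,\nu_5$ at the trivalent corner $\x_{135}$ does give $\det\hh\neq0$, but your route from nonvanishing to positivity rests on the path-connectedness of the normalized family of convex cuboidal hexahedra, which you assert without proof; it is not obvious, because the configuration space is cut out by the face-planarity constraints together with open convexity conditions, and naive linear interpolation of vertices toward the cube does not preserve planarity of the faces. Your fallback via $\det[a\times b,\,b\times c,\,c\times a]=(\det[a,b,c])^2$ does not close this either: writing $\nu_i=s_i(\,\cdot\times\cdot\,)/\|\cdot\|$ with each $s_i$ fixed by $\nu_i\cdot(\text{interior edge})<0$ makes each $s_i$ itself carry a factor of $\operatorname{sign}\det[a,b,c]$, so after the bookkeeping $\det\hh$ is \emph{not} simply a positive multiple of $(\det[a,b,c])^2$; an odd power of the orientation survives, and you again defer its sign to ``comparison with the cube,'' i.e.\ to the same unproved connectedness. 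What actually remains to be verified is the orientation fact that the cross product of two adjacent edge vectors of a convex face, taken in the order induced by the vertex labeling, points along the \emph{outward} normal rather than the inward one --- which is precisely the single geometric input the paper supplies ($(\tau_{53}\times\tau_{15})\cdot\nu_5>0$ for the convex face~5). The gap is therefore local and fixable: substitute that one face-orientation observation (or an actual proof of connectedness) for the appeal to the cube, and the rest of your argument goes through.
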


\begin{proof}
We use the fact that for three vectors,
\begin{align}\nonumber
({\bf a}\times{\bf b})\times({\bf a}\times{\bf c})
= \left(({\bf b}\times{\bf c})\cdot {\bf a}\right) {\bf a}
= \det\left[{\bf a}\:\: {\bf b}\:\: {\bf c}\right] {\bf a}.
\end{align}
We first show that $\det(\hh)>0.$ 
Consider face 5 in Figure~\ref{fig:append}, for which
\begin{align}\label{eq:detH}
\tau_{53}\times\tau_{15} =
 (\nu_5 \times \nu_3) \times (\nu_1 \times \nu_5) 
 =
 (\nu_5 \times \nu_1) \times (\nu_5 \times \nu_3) 
 =  \det(\hh)\nu_5.
\end{align}
It is obvious that $(\tau_{53}\times\tau_{15})\cdot\nu_5>0$ when
face 5 is a convex quadrilateral, i.e., the triangle with vertices $\x_{135}$,
$\x_{125}$ and $\x_{035}$ does not degenerate;
therefore, $\det(\hh)>0$. 

Second, we show that the diagonal entries of $\hh$ are strictly positive.
By convexity, on face 5, $(\tau_{52}\times\tau_{05})\cdot\nu_5>0$.
Thus, computing as in (\ref{eq:detH}), we see that
$(\nu_0\times\nu_2)\cdot\nu_5>0$. Since $\nu_0\times\nu_2 = (-\e_1)\times(-\e_2)=\e_3$,
we obtain that $\nu_{5,3}>0$.
Similarly, since $(\tau_{14}\times\tau_{21})\cdot\nu_1>0$ and
$(\tau_{30}\times\tau_{43})\cdot\nu_2>0$, we have $\nu_{1,1}>0$ and $\nu_{3,2}>0$.

Finally, we show that the principal minors of order 2 are strictly positive.
By convexity, we have on face 5, $(\tau_{50}\times\tau_{35})\cdot\nu_5>0$, and so
$(\nu_3\times\nu_0)\cdot\nu_5>0$, i.e.,
\begin{align}
\det\left[
\begin{matrix}
\nu_{3,1} & -1 & \nu_{5,1}\\
\nu_{3,2} & 0 & \nu_{5,2}\\
\nu_{3,3} & 0 & \nu_{5,3}
\end{matrix}
\right] = \det\left[
\begin{matrix}
\nu_{3,2} & \nu_{5,2}\\
\nu_{3,3} & \nu_{5,3}
\end{matrix}
\right]>0.
\end{align}
The other two principal minors of order 2 can be shown from
$(\tau_{51}\times\tau_{25})\cdot\nu_5>0$
and $(\tau_{13}\times\tau_{41})\cdot\nu_1>0$.
\end{proof}

\subsection{The face centroids and matrix $\C$}\label{subsec:c}
In this section, we look at the matrix 
\begin{equation}\label{eq:cmatrix}
\C = 
\left[\begin{matrix}
c_{1}^1 \ &\ c_{1}^3 \ &\ c_{1}^5\\
c_{2}^1 & c_{2}^3 & c_{2}^5\\
c_{3}^1 & c_{3}^3 & c_{3}^5
\end{matrix}\right] = [\cc^1 \: \cc^3 \: \cc^5],
\end{equation}
where $c_\ell^i$ is the average over face $i$ of the variable $x_\ell$.
That is, $\cc^1$, $\cc^3$, $\cc^5$ are the face centroids of faces 1, 3, and 5, respectively.
Obviously, all $c_\ell^i$ are strictly positive.

Let $\cPj^3: \Re^3\rightarrow\Re^2$ denote the projection in the direction $\e_3$
to the $(x_1,x_2)-$plane. Therefore, $\cPj^3(\cc^i)$ is the centroid of the
projected face~$i$, $\cPj^3(f_i)$, $i=1$, $3$, $5$.

\begin{lemma}\label{lem:a4}
If face $2i$ and face $2i+1$, $i=0,1,2$, are parallel, then the 
determinant of the principal minor of $\C$
formed by deleting row and column $i+1$ is strictly positive.
\end{lemma}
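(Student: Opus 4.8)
The plan is to collapse the three cases into one using the symmetry built into the normalized element $\tilde E$, then to reinterpret the surviving $2\times2$ determinant as a planar orientation and settle it by convex geometry. First, the data defining $\tilde E$ — namely $\nu_0=-\e_1$, $\nu_2=-\e_2$, $\nu_4=-\e_3$ together with $\x_{124}=\e_1$, $\x_{034}=\e_2$, $\x_{025}=\e_3$ — are invariant under the simultaneous cyclic permutation $x_1\mapsto x_2\mapsto x_3\mapsto x_1$ and the relabeling of face pairs $(0,1)\mapsto(2,3)\mapsto(4,5)\mapsto(0,1)$. This permutation carries the three principal minors of $\C$ and the three parallelism hypotheses into one another, so it suffices to treat the case $i=2$: assuming faces $4$ and $5$ are parallel, show that
\[
\det\begin{bmatrix} c_1^1 & c_1^3 \\ c_2^1 & c_2^3\end{bmatrix}=c_1^1c_2^3-c_2^1c_1^3>0.
\]

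Next I would reduce this to a planar orientation. The determinant equals $\det[\cPj^3(\cc^1)\ \cPj^3(\cc^3)]$, that is, twice the signed area of the triangle with vertices $O=(0,0)$, $\cPj^3(\cc^1)$, and $\cPj^3(\cc^3)$ in the $(x_1,x_2)$-plane. By the remark preceding the lemma, $\cPj^3(\cc^1)$ and $\cPj^3(\cc^3)$ are the area centroids of the projected faces $\cPj^3(f_1)$ and $\cPj^3(f_3)$ (should face $1$ or $3$ be parallel to $\e_3$, its projection is a segment and the statement persists with the segment midpoint). Since all $c_\ell^i>0$, both centroids lie in the open first quadrant, so the claim is equivalent to the strict angular statement that the polar angle of $\cPj^3(\cc^1)$ is smaller than that of $\cPj^3(\cc^3)$.

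The geometric heart of the argument uses the parallelism. With faces $4,5$ parallel, face $4$ lies in $\{x_3=0\}$ and face $5$ in $\{x_3=1\}$, so $P:=\cPj^3(\tilde E)$ is a convex polygon in the closed first quadrant. Because the edge $\x_{024}\x_{025}$ projects to $O$, while faces $0$ and $2$ (lying in $\{x_1=0\}$ and $\{x_2=0\}$) project onto segments of the $x_2$- and $x_1$-axes, the point $O$ is a vertex of $P$ whose two incident boundary edges run along the coordinate axes. Moreover $\cPj^3(f_1)$ carries the projection of the edge $f_1\cap f_2$ on the positive $x_1$-axis, $\cPj^3(f_3)$ carries the projection of $f_3\cap f_0$ on the positive $x_2$-axis, and the two projected faces meet along the projection of their common edge $\x_{134}\x_{135}$. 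I would then use convexity of $P$ and the fact that $O$ is the apex vertex: every ray from $O$ into the first quadrant meets the far boundary of $P$ exactly once, $\cPj^3(f_1)$ sits at the low-angle end adjacent to the $x_1$-axis and $\cPj^3(f_3)$ at the high-angle end adjacent to the $x_2$-axis, whence their centroids inherit the strict angular order and the determinant is positive.

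The main obstacle is precisely this last ordering step. The difficulty is that when neither of faces $1,3$ lies on the silhouette of the projection, the regions $\cPj^3(f_1)$ and $\cPj^3(f_3)$ can overlap, so they cannot be separated by a single ray from $O$; one must instead show that their \emph{centroids} remain angularly ordered. I would make this rigorous by tracking the cyclic order of the projected vertices $\bar\x_{124},\bar\x_{125},\bar\x_{135},\bar\x_{134}$ and $\bar\x_{034},\bar\x_{035}$ along $\partial P$ and bounding each centroid's angle by the angular extent of its face's boundary arc. A computational fallback that certainly works is to introduce coordinates for the free vertex positions permitted by the normalization together with faces $4,5$ being parallel, write the two area centroids explicitly via the shoelace formula, and verify $c_1^1c_2^3-c_2^1c_1^3>0$ directly from the convexity inequalities on the vertex coordinates. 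The remaining ingredients — the commuting of $\cPj^3$ with the area centroid and the identification of the determinant with the triangle's signed area — are routine.
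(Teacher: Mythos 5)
Your proposal follows essentially the same route as the paper: reduce by symmetry/WLOG to the case of parallel faces $4$ and $5$, project to the $(x_1,x_2)$-plane, identify the $2\times2$ determinant with twice the signed area of the triangle with vertices $\x_{024}$, $\cPj^3(\cc^1)$, $\cPj^3(\cc^3)$, and conclude positivity from the convex geometry of the projected hexahedron. The ``main obstacle'' you flag --- that $\cPj^3(f_1)$ and $\cPj^3(f_3)$ may overlap, so the angular ordering of their centroids requires a genuine argument rather than a separating ray --- is precisely the step the paper itself dispatches with ``from the figure'' (Figure~\ref{fig:topview2} depicts both the overlap and non-overlap cases), so your identification of that gap, together with the proposed vertex-ordering argument and the explicit shoelace fallback, is if anything more careful than the published proof.
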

\begin{proof}
Without loss of generality, we only need to show that when $\nu_5 = \e_3$, 
\begin{equation}\label{eq:c12}
\det\left[\begin{matrix}
c_{1}^1 & c_{1}^3 \\
c_{2}^1 & c_{2}^3  
\end{matrix}\right] > 0.
\end{equation}
When $\nu_5 = \e_3$, $\tau_{53}$ and $\tau_{34}$ are parallel,
as are $\tau_{15}$ and $\tau_{41}$.
See Figure~\ref{fig:topview2} for the projected view of $\tilde E$. 
From the figure, the area of the triangle formed by $\x_{024}$, $\cPj^3(\cc^1)$
and $\cPj^3(\cc^3)$ is positive, so
\begin{align}
\left(
\threeVec{c_1^1}{c_2^1}{0}\times
\threeVec{c_1^3}{c_2^3}{0}
\right)\cdot\e_3 >0,
\end{align}
which is (\ref{eq:c12}).
\end{proof}

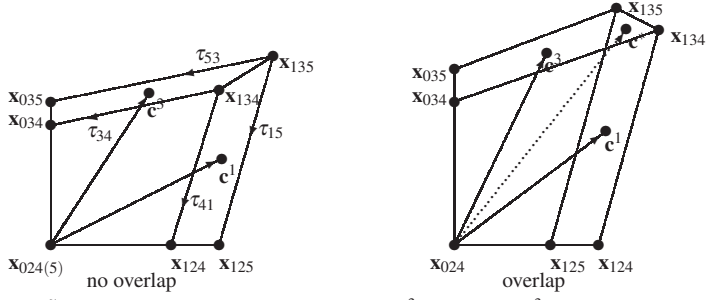
\begin{figure}\centerline{{\setlength\unitlength{4.5pt}\begin{picture}(26,24)(-4,-3)
%
\put(0,0){\circle*{.8}}\put(-3.5,-2){$\x_{024(5)}$}
\put(10,0){\circle*{.8}}\put(10,-2){$\x_{124}$}
\put(0,10){\circle*{.8}}\put(-3.5,10){$\x_{034}$}
\put(14,13){\circle*{.8}}\put(14.5,12){$\x_{134}$}
\put(14,0){\circle*{.8}}\put(14,-2){$\x_{125}$}
\put(0,12){\circle*{.8}}\put(-3.5,12){$\x_{035}$}
\put(18.5,15.8){\circle*{.8}}\put(19,14.8){$\x_{135}$}
\put(14.2,7.2){\circle*{.8}}\put(14,5.2){$\cc^1$}
\put(8.2,12.7){\circle*{.8}}\put(8,10.7){$\cc^3$}
\thinlines
\Line{0}{0}{14}{0}
\Line{0}{0}{0}{12}
\Line{10}{0}{14}{13}
\Line{0}{10}{14}{13}
\Line{14}{0}{18.5}{15.8}
\Line{0}{12}{18.5}{15.8}
\Line{14}{13}{18.5}{15.8}
\Line{0}{0}{14.2}{7.2}\put(12.8,6.5){\vector(2,1){1}}
\Line{0}{0}{8.2}{12.7}\put(7,10.9){\vector(2,3){1}}
\put(12,14.5){\vector(-4,-1){1}}\put(12,15.5){$\tau_{53}$}
\put(4,10.8){\vector(-4,-1){1}}\put(3,9){$\tau_{34}$}
\put(16.9,10){\vector(-1,-3){0.33}}\put(17.2,9.2){$\tau_{15}$}
\put(11.25,4){\vector(-1,-3){0.33}}\put(11.5,3.2){$\tau_{41}$}
\put(3,-3.5){no overlap}
\end{picture}
\qquad\qquad\begin{picture}(26,24)(-4,-3)
%
\put(0,0){\circle*{.8}}\put(-2,-2){$\x_{024}$}
\put(12,0){\circle*{.8}}\put(12,-2){$\x_{124}$}
\put(0,12){\circle*{.8}}\put(-3.5,12){$\x_{034}$}
\put(17,18){\circle*{.8}}\put(18,17){$\x_{134}$}
\put(8,0){\circle*{.8}}\put(8,-2){$\x_{125}$}
\put(0,14.7){\circle*{.8}}\put(-3.5,14){$\x_{035}$}
\put(13.5,19.8){\circle*{.8}}\put(14.5,19.5){$\x_{135}$}
\put(14.3,18.1){\circle*{.8}}\put(14.5,16.5){$\cc^*$}
\put(7.7,16.1){\circle*{.8}}\put(7.5,14.5){$\cc^3$}
\put(12.6,9.5){\circle*{.8}}\put(12.5,7.9){$\cc^1$}
\thinlines
\Line{0}{0}{12}{0}
\Line{0}{0}{0}{14.7}
\Line{12}{0}{17}{18}
\Line{0}{12}{17}{18}
\Line{8}{0}{13.5}{19.8}
\Line{0}{14.7}{13.5}{19.8}
\Line{17}{18}{13.5}{19.8}
\put(13.1,16.3){\vector(2,3){1}}
\Line{0}{0}{7.7}{16.1}
\multiput(0,0)(0.286, 0.362){50}{\makebox(0,0){.}}
\put(6.5,13.7){\vector(1,2){1}}
\Line{0}{0}{12.6}{9.5}\put(11.2,8.4){\vector(4,3){1}}
\put(4,-3.5){overlap}
\end{picture}}}
\caption{View of $\tilde E$ from the top.
The two cases are for $\cPj^3(f_1)$ and $\cPj^3(f_3)$ overlap or not.}\label{fig:topview2}
\end{figure}

\subsection{Invertibility of $\C\circ\hh$}

We have affinely mapped our convex, cuboidal hexahedron $E$ to $\tilde E$.  An affine transformation
will take parallel lines to parallel lines. Therefore, if $E$ has two pair of parallel faces, or if
E is a truncated pillar, the same will be true of $\tilde E$.

\begin{theorem}\label{thm:a6}
For a convex, cuboidal hexahedron $\tilde E$, if one pair of opposite faces are parallel, then
the matrix $\C\circ\hh$ is invertible.
\end{theorem}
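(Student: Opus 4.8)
The plan is to collapse the $3\times3$ Hadamard determinant to a single $2\times2$ determinant using the parallel pair, and then to write that $2\times2$ determinant as a sum of two manifestly positive terms, one controlled by the centroid Lemma~\ref{lem:a4} and one by the normal Theorem~\ref{thm:minors}. I treat the case in which faces~4 and~5 are parallel; the other two cases (faces~$0,1$ or faces~$2,3$ parallel) are entirely analogous, using the corresponding minors furnished by Lemma~\ref{lem:a4} and Theorem~\ref{thm:minors}.

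First I would exploit the parallel pair. Since $\nu_4=-\e_3$, parallelism of faces~4 and~5 forces $\nu_5=\e_3$, so that $\nu_{5,1}=\nu_{5,2}=0$ and $\nu_{5,3}=1$. Hence the third column of $\C\circ\hh$ is $(0,0,c_3^5)^T$, and cofactor expansion of the determinant along that column gives
\begin{equation*}
\det(\C\circ\hh) = c_3^5\,\det\begin{bmatrix} c_1^1\nu_{1,1} & c_1^3\nu_{3,1}\\ c_2^1\nu_{1,2} & c_2^3\nu_{3,2}\end{bmatrix}.
\end{equation*}
Because every face centroid coordinate is strictly positive (Section~\ref{subsec:c}), we have $c_3^5>0$, so it suffices to prove that the remaining $2\times2$ determinant is strictly positive.

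The main obstacle is that the Hadamard product does not factor, so this $2\times2$ determinant is \emph{not} a product of a centroid determinant and a normal determinant. I would overcome this with an add-and-subtract decomposition: inserting $\pm\,c_1^3 c_2^1\,\nu_{1,1}\nu_{3,2}$, the determinant rearranges as
\begin{equation*}
\big(c_1^1 c_2^3 - c_1^3 c_2^1\big)\,\nu_{1,1}\nu_{3,2} + c_1^3 c_2^1\,\big(\nu_{1,1}\nu_{3,2} - \nu_{3,1}\nu_{1,2}\big).
\end{equation*}
Now each factor is pinned down by an earlier result. The centroid minor $c_1^1 c_2^3 - c_1^3 c_2^1$ is strictly positive by Lemma~\ref{lem:a4} (applied to the parallel pair $4,5$); the normal minor $\nu_{1,1}\nu_{3,2}-\nu_{3,1}\nu_{1,2}$ is the order-$2$ principal minor of $\hh$ obtained by deleting row and column $3$, hence strictly positive by Theorem~\ref{thm:minors}; the diagonal entries $\nu_{1,1}$ and $\nu_{3,2}$ are strictly positive again by Theorem~\ref{thm:minors}; and $c_1^3,c_2^1>0$. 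Therefore both summands are positive, the $2\times2$ determinant is positive, and consequently $\det(\C\circ\hh)=c_3^5\cdot(\text{positive})>0$, so $\C\circ\hh$ is invertible.
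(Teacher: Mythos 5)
Your proposal is correct and follows essentially the same route as the paper: reduce to the $2\times2$ determinant $c_1^1\nu_{1,1}\,c_2^3\nu_{3,2}-c_1^3\nu_{3,1}\,c_2^1\nu_{1,2}$ using the parallel pair, then conclude positivity from Lemma~\ref{lem:a4} and Theorem~\ref{thm:minors}. The only cosmetic difference is the final algebraic step: you split the determinant into two manifestly positive summands, whereas the paper chains the inequalities $c_1^1c_2^3>c_2^1c_1^3>0$ and $\nu_{1,1}\nu_{3,2}>\nu_{1,2}\nu_{3,1}$ to reach the same conclusion.
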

\begin{proof}
Without loss of generality, we assume that face 4 is parallel to face 5.
Therefore $\nu_{5,1} = \nu_{5,2} = 0$ and by Theorem~\ref{thm:minors} we know that
$\nu_{5,3}>0$. 
The invertibility of matrix $\C\circ\hh$ is reduced to showing that
\begin{equation}\label{eq:cnu12}
\det\left[\begin{matrix}
c_{1}^1 \nu_{1,1} & c_{1}^3 \nu_{3,1} \\
c_{2}^1 \nu_{1,2} & c_{2}^3 \nu_{3,2}  
\end{matrix}\right] > 0.
\end{equation}
By Lemma~\ref{lem:a4}, we know that $c_1^1c_2^3>c_2^1c_1^3>0$. By Theorem~\ref{thm:minors},
we have $\nu_{1,1}\nu_{3,2} > \nu_{1,2}\nu_{3,1}$.
Therefore, $c_1^1c_2^3\nu_{1,1}\nu_{3,2} > c_2^1c_1^3\nu_{1,2}\nu_{3,1}$, and (\ref{eq:cnu12}) holds.
\end{proof}

\begin{theorem}\label{thm:pillar}
For any truncated pillar $\tilde E$, the matrix $\C\circ\hh$ is invertible.
\end{theorem}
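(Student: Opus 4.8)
The plan is to follow the skeleton of the proof of Theorem~\ref{thm:a6}, but to replace its appeal to Lemma~\ref{lem:a4} (which requires a parallel pair of caps) by a direct cross-section argument, since the two truncating faces of a pillar need not be parallel. First I would invoke the symmetry of the construction under permutation of the coordinate axes together with the face pairs $(0,1)$, $(2,3)$, $(4,5)$: such a permutation sends $\C\circ\hh$ to a simultaneous row-and-column permutation of itself, preserving invertibility. Hence, without loss of generality, I may assume the four parallel edges of the pillar form the combinatorial class $\{f_0\cap f_2,\,f_0\cap f_3,\,f_1\cap f_2,\,f_1\cap f_3\}$, so that faces~$4$ and~$5$ are the caps and faces~$0$, $1$, $2$, $3$ are the lateral faces. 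Each lateral face is parallel to the common pillar direction $d$, and because the affine normalization gives $\nu_0=-\e_1$ and $\nu_2=-\e_2$, both perpendicular to $d$, I conclude $d=\e_3$ up to sign. Since faces~$1$ and~$3$ are also lateral, and thus parallel to $\e_3$, this forces $\nu_{1,3}=\nu_{3,3}=0$.

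With these two vanishing entries, the third row of $\C\circ\hh$ in \eqref{eq:cnu} is $(0,\,0,\,c_3^5\nu_{5,3})$, so expanding along it gives
\[
\det(\C\circ\hh)=c_3^5\,\nu_{5,3}\,\big(c_1^1c_2^3\,\nu_{1,1}\nu_{3,2}-c_1^3c_2^1\,\nu_{3,1}\nu_{1,2}\big).
\]
Here $c_3^5>0$ since all face-centroid coordinates are strictly positive, and $\nu_{5,3}>0$ by Theorem~\ref{thm:minors}. It therefore remains to prove the bracketed $2\times2$ determinant is strictly positive, which is exactly the condition \eqref{eq:cnu12} already met in the proof of Theorem~\ref{thm:a6}.

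Two ingredients feed this. From the order-two principal minor in Theorem~\ref{thm:minors} I have $\nu_{1,1}\nu_{3,2}-\nu_{1,2}\nu_{3,1}>0$ with $\nu_{1,1},\nu_{3,2}>0$. The second, and the genuinely new, ingredient is the centroid inequality $c_1^1c_2^3>c_2^1c_1^3>0$, which is the main obstacle because Lemma~\ref{lem:a4} is unavailable when the caps are not parallel. I would instead argue from the cross-section obtained by projecting along the pillar direction: under $\cPj^3$ the lateral faces $0$, $1$, $2$, $3$ collapse onto the four edges of the convex quadrilateral $Q=\cPj^3(\tilde E)$, whose vertices are the images of the four pillar edges, namely $(0,0)=\cPj^3(\x_{024})$, then $(1,0)$, then a point $(a,b)$ with $a,b>0$, then $(0,1)$, listed counterclockwise. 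The projected centroids $\cPj^3(\cc^1)=(c_1^1,c_2^1)$ and $\cPj^3(\cc^3)=(c_1^3,c_2^3)$ lie in the relative interiors of the edges $[(1,0),(a,b)]$ and $[(0,1),(a,b)]$, respectively, so, measured from the positive $x_1$-axis, the polar angle of $(c_1^1,c_2^1)$ is strictly less than that of $(c_1^3,c_2^3)$, and both lie in $(0,\pi/2)$; hence the planar cross product $c_1^1c_2^3-c_2^1c_1^3>0$.

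Finally I would combine the two inequalities. If $\nu_{3,1}\nu_{1,2}\le0$, the subtracted term $c_1^3c_2^1\nu_{3,1}\nu_{1,2}$ is nonpositive and is dominated by the strictly positive first term; otherwise $0<\nu_{3,1}\nu_{1,2}<\nu_{1,1}\nu_{3,2}$ and $0<c_1^3c_2^1<c_1^1c_2^3$, so multiplying these gives $c_1^1c_2^3\nu_{1,1}\nu_{3,2}>c_1^3c_2^1\nu_{3,1}\nu_{1,2}$. In either case the bracketed determinant, and hence $\det(\C\circ\hh)$, is strictly positive, so $\C\circ\hh$ is invertible. The only work beyond Theorem~\ref{thm:a6} is the cross-section argument for the centroid inequality; all the facts about $\hh$ come verbatim from Theorem~\ref{thm:minors}.
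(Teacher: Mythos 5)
Your proof is correct and follows essentially the same route as the paper's: reduce by symmetry to a vertical pillar so that $\nu_{1,3}=\nu_{3,3}=0$, observe that the projections of $\cc^1$ and $\cc^3$ along the pillar axis land on edges of the convex cross-section (giving $c_1^1c_2^3>c_2^1c_1^3>0$), and then conclude as in Theorem~\ref{thm:a6} using Theorem~\ref{thm:minors}. Your explicit sign case analysis when combining the centroid and normal inequalities is a small but welcome addition of rigor that the paper leaves implicit.
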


\begin{proof}
We assume without loss of generality that $\tilde E$ is a truncated vertical pillar, so
$\nu_{0,3}=\nu_{1,3}=\nu_{2,3}=\nu_{3,3}=0$. The matrix $\C\circ\hh$ reduces to
\begin{equation}
\C\circ\hh = \left[\begin{matrix}
c_{1}^1 \nu_{1,1} & c_{1}^3 \nu_{3,1} & c_{1}^5 \nu_{5,1}\\
c_{2}^1 \nu_{1,2} & c_{2}^3 \nu_{3,2} & c_{2}^5 \nu_{5,2}\\
0 & 0 & c_{3}^5 \nu_{5,3}
\end{matrix}\right].
\end{equation}
Moreover, the projection of $\cc^1$ on the bottom plane is in the line from
$\x_{124}$ to $\x_{134}$, and the projection of $\cc^3$ in the line from
$\x_{134}$ to $\x_{034}$
(see Figure~\ref{fig:topview2}, where now $\x_{035}$ and $\x_{034}$
are on top of each other, as are $\x_{134}$ and $\x_{135}$, and also $\x_{124}$ and $\x_{125}$).
Therefore, we have
\begin{equation}
\det\left[\begin{matrix}
c_{1}^1 \ &\ c_{1}^3 \\
c_{2}^1 \ &\ c_{2}^3  
\end{matrix}\right] > 0.
\end{equation}
The rest of the proof follows that of Theorem~\ref{thm:a6}.
\end{proof}

\section{Proof of Theorem~\ref{thm:simpleAT1fluxes-symm}}\label{sec:appb}

For $\tilde E$, the local variables on face 1 are $x_2$ and $x_3$, so
a base for the normal flux on $f_1$ is ${\rm span}\{1, x_2, x_3\} =
\Po_1(f_1)$. Similarly, ${\rm span}\{1, x_1, x_3\} = \Po_1(f_3)$, 
and ${\rm span}\{1,x_1,x_2\}=\Po_1(f_5)$.
Define the operator $\myStrut\cF_{135}^* \in \Re^{1\times9}$ to be the normal fluxes
of $f_1$, $f_3$, and $f_5$ in the local degrees of freedom, i.e., 
\begin{align}
\cF_{135}^*(\uu){\bf X}^T = \cF_{135}(\uu),\quad\text{where}\quad
{\bf X} = \left[\begin{array}{ccc|ccc|ccc}
1 & x_2 & x_3 & 1 & x_1 & x_3 & 1 & x_1 & x_2
\end{array}\right].
\end{align}
Similarly, we define 
$
\cF_{135}^*(\uu_1,\ldots,\uu_n) = \left({\small
\begin{matrix}
\cF_{135}^*(\uu_1)\\
\vdots\\
\cF_{135}^*(\uu_n)
\end{matrix}
}\right)\in \Re^{n\times9}.
$ 
On $f_1$, $(\x-\e_1)\cdot \nu_1=0$, i.e., 
$x_1\nu_{1,1} = \nu_{1,1} - x_2\nu_{1,2}-x_3\nu_{1,3}$.
Similar statements hold on $f_3$ and $f_5$, so
we can rewrite (\ref{eq:flux9-11}) as 
\begin{align}\label{eq:fstar9-11}
\cF_{135}^*(\bpsi_{9}^*,\bpsi_{10}^*,\bpsi_{11}^*)
= \left[\begin{array}{ccc|ccc|ccc}
\nu_{1,1} & -\nu_{1,2} & -\nu_{1,3} & 0 & \nu_{3,1} & 0 & 0 & \nu_{5,1} & 0\\
0 & \nu_{1,2} & 0 & \nu_{3,2} & -\nu_{3,1} & -\nu_{3,3} & 0 & 0 & \nu_{5,2}\\
0 & 0 & \nu_{1,3} & 0 & 0 & \nu_{3,3} & \nu_{5,3} & -\nu_{5,1} & -\nu_{5,2}
\end{array}\right].
\end{align}

To prove that (\ref{eq:simpleAT1fluxes-symm}) provides independent degrees of freedom, 
we need to show that the $9\times 9$ matrix
\begin{align}\nonumber
&\cF_{135}^*(\bpsi_{9}^*,\ldots,\bpsi_{11}^*, \bsigma_0, \ldots,\bsigma_5)
\\
& \quad = \left[\begin{array}{ccc | ccc | ccc}\label{eq:fstar-all}
\nu_{1,1} & -\nu_{1,2} & -\nu_{1,3} \ & 0 & \nu_{3,1} & 0 & 0 & \nu_{5,1} & 0\\
0 & \nu_{1,2} & 0 & \ \nu_{3,2} & -\nu_{3,1} & -\nu_{3,3} \ & 0 & 0 & \nu_{5,2}\\
0 & 0 & \nu_{1,3} & 0 & 0 & \nu_{3,3} & \ \nu_{5,3} & -\nu_{5,1} & -\nu_{5,2}\\
\hline
-c_2^1 & 1 & 0 & 0 & 0 & 0 & 0 & 0 & 0\myStrut\\
-c_3^1 & 0 & 1 & 0 & 0 & 0 & 0 & 0 & 0\myStrut\\
\hline
0 & 0 & 0 & -c_1^3 & 1 & 0 & 0 & 0 & 0\myStrut\\
0 & 0 & 0 & -c_3^3 & 0 & 1 & 0 & 0 & 0\myStrut\\
\hline
0 & 0 & 0 & 0 & 0 & 0 & -c_1^5 & 1 & 0\myStrut\\
0 & 0 & 0 & 0 & 0 & 0 & -c_2^5 & 0 & 1\myStrut
\end{array}\right]
\end{align}
is invertible. By the fact that $\cc^i = [c_1^i \:\: c_2^i \:\: c_3^i]^T$ is on $f_i$,
$i=1,3,5$,  
we know that, e.g., $c_1^1\nu_{1,1} = \nu_{1,1} - c_2^1\nu_{1,2} - c_3^1\nu_{1,3}$.
In (\ref{eq:fstar-all}), using rows $4$ to $9$ to cancel out entries in columns 2, 3, 
5, 6, 8, and 9 of
the first three rows, we obtain
\begin{align}\nonumber
\left[\begin{array}{ccc|ccc|ccc}
c_1^1\nu_{1,1} \ &\ 0 \ &\ 0 \ &\ c_1^3\nu_{3,1} \ &\  0 \ &\ 0 \ &\ c_1^5\nu_{5,1} \ &\ 0 \ &\ 0\\
c_2^1\nu_{1,2} & 0 & 0 & c_2^3\nu_{3,2} & 0 & 0 & c_2^5\nu_{5,2} & 0 & 0\\
c_3^1\nu_{1,3} & 0 & 0 & c_3^3\nu_{3,3} & 0 & 0 & c_3^5\nu_{5,3} & 0 & 0\\
\hline
-c_2^1 & 1 & 0 & 0 & 0 & 0 & 0 & 0 & 0\myStrut\\
-c_3^1 & 0 & 1 & 0 & 0 & 0 & 0 & 0 & 0\myStrut\\
\hline
0 & 0 & 0 & -c_1^3 & 1 & 0 & 0 & 0 & 0\myStrut\\
0 & 0 & 0 & -c_3^3 & 0 & 1 & 0 & 0 & 0\myStrut\\
\hline
0 & 0 & 0 & 0 & 0 & 0 & -c_1^5 & 1 & 0\myStrut\\
0 & 0 & 0 & 0 & 0 & 0 & -c_2^5 & 0 & 1\myStrut
\end{array}\right].
\end{align}
We rearrange the columns to
\begin{align}\nonumber
\left[\begin{array}{ccc|cccccc}
c_1^1\nu_{1,1} \ &\ c_1^3\nu_{3,1} \ &\ c_1^5\nu_{5,1} \ &\ 0 \ &\ 0 \ &\ 0 \ &\ 0 \ &\ 0 \ &\ 0\\
c_2^1\nu_{1,2} & c_2^3\nu_{3,2} & c_2^5\nu_{5,2} & 0 & 0 &  0 & 0  & 0 & 0\\
c_3^1\nu_{1,3} & c_3^3\nu_{3,3} & c_3^5\nu_{5,3} & 0 & 0 &  0 & 0  & 0 & 0\\
\hline
-c_2^1 & 0 & 0 & 1 & 0  & 0 & 0  & 0 & 0\myStrut\\
-c_3^1 & 0 & 0 & 0 & 1  & 0 & 0  & 0 & 0\myStrut\\
0 & -c_1^3 & 0 & 0 & 0 & 1 & 0  & 0 & 0\myStrut\\
0 & -c_3^3 & 0 & 0 & 0 & 0 & 1  & 0 & 0\myStrut\\
0 & 0 & -c_1^5 & 0 & 0 & 0 & 0  & 1 & 0\myStrut\\
0 & 0 & -c_2^5 & 0 & 0 & 0 & 0  & 0 & 1\myStrut
\end{array}\right].
\end{align}
The upper left submatrix is exactly $\C\circ\hh$, and the proof of 
Theorem~\ref{thm:simpleAT1fluxes-symm} is complete.

\section{Proof of Theorem~\ref{thm:simpleAT1fluxes-nonsymm}}\label{sec:appc}

Rewrite (\ref{eq:simpleAT1fluxes-nonsymm}) with $\cF_{135}^*$, to obtain
\begin{align}\nonumber
&\cF_{135}^*(\bpsi_{9}^*,\ldots,\bpsi_{11}^*, \bsigma_0, \ldots,\bsigma_3, \bsigma_4^*, \bsigma_5^*)
\\
\label{eq:fstar-all2}
& = \left[\begin{array}{ccc | ccc | ccc}
\nu_{1,1} & -\nu_{1,2} & -\nu_{1,3} & 0 & \nu_{3,1} & 0 & 0 & \nu_{5,1} & 0\\
0 & \nu_{1,2} & 0 & \nu_{3,2} & -\nu_{3,1} & -\nu_{3,3}  & 0 & 0 & \nu_{5,2}\\
0 & 0 & \nu_{1,3} & 0 & 0 & \nu_{3,3} &  \nu_{5,3} & -\nu_{5,1} & -\nu_{5,2}\\
\hline
-c_2^1 & 1 & 0 & 0 & 0 & 0 & 0 & 0 & 0\myStrut\\
-c_3^1 & 0 & 1 & 0 & 0 & 0 & 0 & 0 & 0\myStrut\\
\hline
0 & 0 & 0 & -c_1^3 & 1 & 0 & 0 & 0 & 0\myStrut\\
0 & 0 & 0 & -c_3^3 & 0 & 1 & 0 & 0 & 0\myStrut\\
\hline
\dfrac{-|f_5|c_1^5+t\myStrut}{|f_1|} & 0 & 0 & \dfrac{-t}{|f_3|} & 0 & 0 & 0 & 1 & 0\\[7pt]
\dfrac{-s}{|f_1|} & 0 & 0 & \dfrac{-|f_5|c_2^5+s\myStrut}{|f_3|} & 0 & 0 & 0 & 0 & 1
\end{array}\right].
\end{align}
If there exist constants $s$ and $t$ such that the matrix (\ref{eq:fstar-all2})
is invertible, the non-symmetric supplements $\bsigma_0$ to $\bsigma_3$, $\bsigma_4^*$,
and $\bsigma_5^*$ provide independent degrees of 
freedom.

Using rows $4$ to $7$ to cancel out entries in columns 2, 3, 5, and 6 in the first three rows, we obtain
\begin{align}\nonumber
\left[\begin{array}{ccc | ccc | ccc}
c_1^1\nu_{1,1} & 0 & 0 & c_1^3\nu_{3,1} & 0 & 0 & 0 & \nu_{5,1} & 0\\
c_2^1\nu_{1,2} & 0 & 0 & c_2^3\nu_{3,2} & 0 & 0 & 0 & 0 & \nu_{5,2}\\
c_3^1\nu_{1,3} & 0 & 0 & c_3^3\nu_{3,3} & 0 & 0 & \nu_{5,3} & -\nu_{5,1} & -\nu_{5,2}\\
\hline
-c_2^1 &\ 1\ &\ 0\ & 0 &\ 0\ &\ 0\ & 0 &\ 0\ &\ 0\myStrut\\
-c_3^1 & 0 & 1 & 0 & 0 & 0 & 0 & 0 & 0\myStrut\\
\hline
0 & 0 & 0 & -c_1^3 & 1 & 0 & 0 & 0 & 0\myStrut\\
0 & 0 & 0 & -c_3^3 & 0 & 1 & 0 & 0 & 0\myStrut\\
\hline
(-|f_5|c_1^5+t\myStrut)/{|f_1|} & 0 & 0 & {-t}/{|f_3|} & 0 & 0 & 0 & 1 & 0\\
{-s}/{|f_1|} & 0 & 0 & (-|f_5|c_2^5+s\myStrut)/{|f_3|} & 0 & 0 & 0 & 0 & 1
\end{array}\right].
\end{align}
Rearrange the columns and rows to see
\begin{align}\nonumber
\left[\begin{array}{cccc|ccccc}
c_1^1\nu_{1,1} & c_1^3\nu_{3,1} & \nu_{5,1} & 0 & 0 & 0 & 0 & 0 & 0 \\
c_2^1\nu_{1,2} & c_2^3\nu_{3,2} & 0 & \nu_{5,2} & 0 & 0 & 0 & 0 & 0 \\[2pt]
(-|f_5|c_1^5+t\myStrut)/{|f_1|} & {-t}/{|f_3|} & 1 & 0 & 0 & 0 & 0 & 0 & 0\\[2pt]
{-s}/{|f_1|} & (-|f_5|c_2^5+s\myStrut)/{|f_3|} & 0 & 1 & 0 & 0 & 0 & 0 & 0\\[2pt]
\hline
c_3^1\nu_{1,3} & c_3^3\nu_{3,3} & -\nu_{5,1} & -\nu_{5,2} & \nu_{5,3}& 0 & 0 & 0 & 0\myStrut\\
-c_2^1 &\ 0\ &\ 0\ & 0 &\ 0\ &\ 1\ & 0 &\ 0\ &\ 0\myStrut\\
-c_3^1 & 0 & 0 & 0 & 0 & 0 & 1 & 0 & 0\myStrut\\
0 & -c_1^3 & 0 & 0 & 0 & 0 & 0 & 1 & 0\myStrut\\
0 & -c_3^3 & 0 & 0 & 0 & 0 & 0 & 0 & 1\myStrut
\end{array}\right].
\end{align}
This matrix is invertible if and only if
\begin{align}\label{eq:red4}
\left[\begin{array}{cc|cc}
c_1^1\nu_{1,1} & c_1^3\nu_{3,1} & \nu_{5,1} & 0 \\[2pt]
c_2^1\nu_{1,2} & c_2^3\nu_{3,2} & 0 & \nu_{5,2} \\[2pt]
\hline
(-|f_5|c_1^5+t\myStrut)/{|f_1|} & {-t}/{|f_3|} & 1 & 0 \\[2pt]
{-s}/{|f_1|} & (-|f_5|c_2^5+s\myStrut)/{|f_3|} & 0 & 1 \\[2pt]
\end{array}\right]
\end{align}
is invertible.
A $2\times 2$ block matrix has the following lemma ~\cite{Silvester_2000}.
\begin{lemma}
If ${\bf M} = \left(\begin{matrix}
\bf A \ & \bf B\\
\bf C & \bf D
\end{matrix}\right)$, where $\bf A$, $\bf B$, $\bf C$, ${\bf D}\in\Re^{n\times n}$
and $\bf CD = DC$, then 
$$\det {\bf M} = \det ({\bf AD-BC}).$$
\end{lemma}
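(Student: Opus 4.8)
The plan is to reduce the statement to a block-triangular determinant by a single right-multiplication, dispose of the case in which $\bf D$ is invertible immediately, and then remove the invertibility assumption by a perturbation argument.

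First I would record the one algebraic identity on which the whole proof rests,
\begin{equation*}
\left(\begin{matrix}{\bf A}&{\bf B}\\{\bf C}&{\bf D}\end{matrix}\right)
\left(\begin{matrix}{\bf D}&0\\-{\bf C}&I\end{matrix}\right)
= \left(\begin{matrix}{\bf AD}-{\bf BC}&{\bf B}\\{\bf CD}-{\bf DC}&{\bf D}\end{matrix}\right)
= \left(\begin{matrix}{\bf AD}-{\bf BC}&{\bf B}\\0&{\bf D}\end{matrix}\right),
\end{equation*}
where the lower-left block vanishes precisely because ${\bf CD}={\bf DC}$. This is the only place the commutativity hypothesis is used.

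If $\bf D$ is invertible, I would take determinants of both sides. The middle factor is block lower triangular with determinant $\det{\bf D}$, while the right-hand side is block upper triangular with determinant $\det({\bf AD}-{\bf BC})\det{\bf D}$, so $\det{\bf M}\,\det{\bf D}=\det({\bf AD}-{\bf BC})\det{\bf D}$; cancelling the nonzero factor $\det{\bf D}$ then gives the claim. The only genuine obstacle is the singular case, and the hard part is arranging a clean passage from invertible $\bf D$ to arbitrary $\bf D$.

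To this end I would perturb, replacing $\bf D$ by ${\bf D}+tI$. Since ${\bf C}({\bf D}+tI)=({\bf D}+tI){\bf C}$ whenever ${\bf CD}={\bf DC}$, the commutativity persists, and ${\bf D}+tI$ is invertible for all but the finitely many $t$ equal to the negatives of the eigenvalues of $\bf D$. The already-established invertible case then yields
\begin{equation*}
\det\left(\begin{matrix}{\bf A}&{\bf B}\\{\bf C}&{\bf D}+tI\end{matrix}\right)
= \det\big({\bf A}({\bf D}+tI)-{\bf BC}\big)
\end{equation*}
for all but finitely many $t$. Both sides are polynomials in $t$, and since they agree at infinitely many values they agree identically; evaluating at $t=0$ completes the proof.
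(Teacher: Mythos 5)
Your proof is correct. Note that the paper does not actually prove this lemma: it simply quotes it with a citation to Silvester's article on determinants of block matrices, and the argument you give (right-multiplication by $\left(\begin{smallmatrix}{\bf D}&0\\-{\bf C}&I\end{smallmatrix}\right)$ to triangularize, using ${\bf CD}={\bf DC}$ only to kill the lower-left block, followed by the polynomial perturbation ${\bf D}\mapsto{\bf D}+tI$ to dispose of singular ${\bf D}$) is essentially the standard proof found in that reference, so nothing is missing.
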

Obviously, the lower right submatrix of (\ref{eq:red4}) (an identity matrix)
commutes with any $2\times 2$ matrix. Thus, to prove that matrix (\ref{eq:red4}) is invertible,
we need to show that
\begin{align}\label{eq:red2}
\det\left[\begin{matrix}
c_1^1\nu_{1,1} + \nu_{5,1}(|f_5|c_1^5-t\myStrut)/{|f_1|} 
& c_1^3\nu_{3,1}+{t}\,\nu_{5,1}/{|f_3|}\\[2pt]
c_2^1\nu_{1,2}  + s\,\nu_{5,2}/{|f_1|}
& c_2^3\nu_{3,2} + \nu_{5,2}(|f_5|c_2^5-s\myStrut)/{|f_3|}
\end{matrix}\right]\neq 0.
\end{align}
This determinant is a bilinear function in $s$ and $t$, denoted as $d(s,t)$.
If we can prove that $d(s,t)\not\equiv 0 $, then we can find a pair
$(s^*, t^*)$ such that $d(s^*, t^*)\neq 0$, and the last two non-symmetric supplements
$\bsigma_4^*$ and $\bsigma_5^*$ in (\ref{eq:simpleAT1fluxes-nonsymm}) are defined.
There are two cases.

{\bf{Case 1}:} $\nu_{5,1} = 0$ and $\nu_{5,2} = 0$.
In this case, $\tilde E$ is a truncated vertical pillar, and by the proof of Theorem~\ref{thm:pillar},
we know that
\begin{align}
d(s,t) = \det\left[\begin{matrix}
c_1^1\nu_{1,1} 
& c_1^3\nu_{3,1}\\[2pt]
c_2^1\nu_{1,2}  
& c_2^3\nu_{3,2}
\end{matrix}\right]> 0,
\end{align}
and $s$ and $t$ may be taken arbitrarily.

{\bf{Case 2}:} $\nu_{5,2}\neq 0$ or $\nu_{5,1}\neq 0$.
By symmetry, we only show the situation $\nu_{5,2}\neq 0$ here.
Let
\begin{align}
a &= d(0,0) = \det\left[\begin{matrix}
c_1^1\nu_{1,1} + \nu_{5,1}(|f_5|c_1^5)/{|f_1|}
& c_1^3\nu_{3,1}\\[2pt]
c_2^1\nu_{1,2}  
& c_2^3\nu_{3,2} + \nu_{5,2}(|f_5|c_2^5)/{|f_3|}
\end{matrix}\right],\\
b &= d(|f_5|c_2^5,0) = \det\left[\begin{matrix}
c_1^1\nu_{1,1} + \nu_{5,1}(|f_5|c_1^5)/{|f_1|}
& \quad c_1^3\nu_{3,1}\\[2pt]
c_2^1\nu_{1,2}  + \nu_{5,2}(|f_5|c_2^5)/{|f_1|}
& \quad c_2^3\nu_{3,2} 
\end{matrix}\right].
\end{align}
Then
\begin{align}\nonumber
a - b & = (\nu_{5,2}|f_5|c_2^5)\det\left[\begin{matrix}
c_1^1\nu_{1,1} + \nu_{5,1}(|f_5|c_1^5)/{|f_1|}
& \quad c_1^3\nu_{3,1}\\[2pt]
-1/|f_1|
& \quad 1/|f_3|
\end{matrix}\right]\\
&= \frac{\nu_{5,2}|f_5|c_2^5}{|f_1||f_3|}
(|f_1|c_1^1\nu_{1,1} + |f_3|c_1^3\nu_{3,1}+|f_5|c_1^5\nu_{5,1}) \neq 0,
\end{align}
since
\begin{align}
&|f_1|c_1^1\nu_{1,1} + |f_3|c_1^3\nu_{3,1}+|f_5|c_1^5\nu_{5,1} \\\nonumber
&\quad = \int_{\partial \tilde E} \threeVec{x_1}{0}{0}\cdot\nu dA 
 = \int_{\tilde E}\div\threeVec{x_1}{0}{0} dV = |\tilde E| \neq 0.
\end{align}
The fact $a\neq b$ implies that $d(s,t)\not\equiv 0$, and so \eqref{eq:fstar-all2} is invertible.
\endAdd




\end{document}